\newcounter{counterA}
\renewcommand{\thecounterA}{\arabic{counterA}}
\newcommand{\counterA}{\refstepcounter{counterA}%
           \thecounterA}
\newcommand{\ZZ}{\mathbb Z}
\newcommand{\NN}{\mathbb N}
\newcommand{\depth}[1]{\operatorname{depth} {#1}}
\newcommand{\ldb}{\mathopen{[\![}} 
\newcommand{\rdb}{\mathclose{]\!]}} 
\newcommand{\m}{\mathfrak{m}}
\DeclareMathOperator{\Hom}{Hom}
\DeclareMathOperator{\Ext}{Ext}
\DeclareMathOperator{\pd}{pd}
\DeclareMathOperator{\id}{id}
\theoremstyle{plain}
\newtheorem{theorem}{Theorem}
\newtheorem{prop}[theorem]{Proposition}
\newtheorem{cor}[theorem]{Corollary}
\newtheorem{lemma}[theorem]{Lemma}
\newtheorem{claim}[]{Claim}
\newtheorem{question}[theorem]{Question}
\theoremstyle{definition}
\newtheorem{definition}[theorem]{Definition}
\newtheorem{notation}[theorem]{Notation}
\newtheorem{example}[theorem]{Example}
\newtheorem{remark}[theorem]{Remark}
\newtheorem{obs}[theorem]{Observation}
\numberwithin{theorem}{section}
\numberwithin{equation}{section}
\numberwithin{equation}{theorem}
\begin{document}

\title{Semidualizing Modules over Numerical Semigroup Rings}
\author[Ela Celikbas]{Ela Celikbas}
\address{Ela Celikbas \\ School of Mathematical and Data Sciences, West Virginia University, Morgantown, WV 26506, USA.}
\email{ela.celikbas@math.wvu.edu}

\author[Hugh Geller]{Hugh Geller}
\address{Hugh Geller \\ School of Mathematical and Data Sciences, West Virginia University, Morgantown, WV 26506, USA.}
\email{hugh.geller@mail.wvu.edu}

\author[Toshinori Kobayashi]{Toshinori Kobayashi}
\address{Toshinori Kobayashi \\ School of Science and Technology, Meiji University, 1-1-1 Higashi-Mita, Tama-ku, Kawasaki-shi, Kanagawa 214-8571, Japan.}
\email{tkobayashi@meiji.ac.jp}

\keywords{numerical semigroup rings, canonical modules, semidualizing modules, Burch ideals.}
\subjclass[2020]{13C05, 13C13, 13D07, 20M25.}

\thanks{Toshinori Kobayashi was partly supported by JSPS Grant-in-Aid for JSPS Fellows 21J00567.} 

\begin{abstract}
A semidualizing module is a generalization of Grothendieck's dualizing module. For a local Cohen-Macaulay ring $R$, the ring itself and its canonical module are always realized as (trivial) semidualizing modules. Reasonably, one might ponder the question; when do nontrivial examples exist? In this paper, we study this question in the realm of numerical semigroup rings and completely classify which of these rings with multiplicity at most 9 possess a nontrivial semidualizing module. Using this classification, we construct numerical semigroup rings in any multiplicity at least 9 possesses a nontrivial semidualizing module.
\end{abstract}

\maketitle

\section{Introduction}

The theory of dualizing modules, established by Grothendieck in 1961 (see \cite{Ha}), has been substantially developed and applied in commutative algebra and algebraic geometry. 
Almost ten years later, the concept of semidualizing modules which is a generalization of dualizing modules was first initiated by Foxby as a special case of PG-modules given in \cite{F72}. Semidualizing modules were also independently discovered by Vasconcelos, Golod, and Wakamatsu (see \cite{G84, V74, W88}). Vasconcelos referred them as spherical modules and Golod called them suitable modules. Wakamutsu was interested in the bimodules ${}_{B} T_{A}$ where $A$ and $B$ are commutative local and Artin rings satisfying two properties. In the case of $A=B$, these properties would match the notion of semidualizing modules. Christensen generalized the techniques of homological algebra on complexes studied by Avramov, Foxby, and Golod and introduced the concept of semidualizing complexes, which led to the foundation for the use of the terminology ``semidualizing modules", see \cite{C}. Since then, significant research has been conducted on the homological properties, homological dimensions, characterizations of semidualizing modules in the literature, see \cite{A, CSW, FSW1, FSW2, Ger1, Ger2, HJ, NSW, SW07, TW}, and others. 

For a commutative ring $R$, we say that a finitely generated $R$-module $C$ is \textit{semidualizing} if the natural homothety map $R \rightarrow \Hom_R(C, C)$ is an isomorphism and $\Ext_R^i(C,C)=0$ for all $i>0$.
It is evident from this definition that $R$ itself is a semidualizing $R$-module.
Furthermore, if $R$ is local Cohen--Macaulay with a canonical module $K$, then $K$ is also a semidualizing $R$-module. In 1985, Golod \cite{G85} posed a question asking about the existence of a local ring $R$ that has a nontrivial semidualizing module, i.e., a semidualizing module different from $R$ and $K$. 
In 1987, Foxby provided the first example of rings with nontrivial semidualizing modules in his unpublished notes.

Over a Gorenstein ring R, semidualizing modules are trivial because every semidualizing $R$-module is isomorphic to $R$, see \cite[(8.6)]{C}. Several known results demonstrate that some other local rings also have only trivial semidualizing modules. For example, Sather-Wagstaff , and O. Celikbas and Dao showed that only trivial semidualizing modules exist over determinantal rings (\cite{SW07}) and Veronese subrings (\cite{CD}), respectively. Sanders also showed in \cite{S} that, under mild assumptions, the ring of invariants has only trivial semidualizing modules.
Moreover it is known that all Cohen--Macaulay rings with minimal multiplicity have no nontrivial semidualizing modules, see \cite[Example 4.2.14]{SW10}.
This suggests that the existence of nontrivial semidualizing modules can serve as a valuable criterion to estimate how good the rings are.

A remarkable structure theorem on local rings that have nontrivial semidualizing modules was given by Jorgensen, Leuschke and Sather-Wagstaff \cite{JLSW}.
However, it is not easy to determine from their criteria whether the conditions hold for a particular ring to produce examples with nontrivial semidualizing modules. Notably, even in the case of a ring with Krull dimension one, determining the existence of semidualizing modules becomes a challenging task.

In this paper, we investigate the existence of nontrivial semidualizing modules by restricting our attention to one-dimensional Cohen--Macaulay rings with small multiplicities.
We first observe that if the ring has multiplicity at most $8$, then it has only trivial semidualizing modules (see Proposition \ref{mult8}).
Therefore, it is appropriate to focus on rings of multiplicity $9$.

As our next step, we narrow our focus to numerical semigroup rings, which represent a prototypical class of one-dimensional rings.
Each such ring is associated with an additive submonoid of $\mathbb{N}$.
Numerical semigroup rings share good properties such as being integral domains, Cohen--Macaulayness, and the existence of a canonical module.
Over these rings, many numerical invariants including multiplicities and Cohen--Macaulay types are easy to compute.
To continue our investigation, we examine $R=k[\![t^9,t^{10},t^{11},t^{12},t^{15}]\!]$, which is a numerical semigroup ring of multiplicity 9 given in \cite{GTTT}.
By combining the results of \cite{GTTT} with \cite[Lemma 4.6]{HW}, it is easy to verify the vanishing of $\Ext^1_R(I,I)$ for the fractional ideal $I = (1,t)$.
We then present an argument in Example~\ref{ex7.3}, demonstrating that $I$ is a nontrivial semidualizing module over $R$.
This particular example is noteworthy as we are currently unaware of any prior efforts in the literature regarding the study of semidualizing modules over numerical semigroup rings. This leads us to pose the following question.

\begin{question}\label{Question2} Can we give a characterization for numerical semigroup rings $k \ldb H \rdb$ with $e(k \ldb H \rdb)=9$ that have a nontrivial semidualizing module? 
\end{question}

To address this question, we employ Kunz's concept of classifying numerical semigroups with a fixed multiplicity. Kunz introduces the polyhedron $P_m$ and
gives a bijection $\mu$ between the set of numerical semigroups containing an integer $m \geq 3$ and the set of lattice points of the polyhedron $P_m$. 
In our case, as we are working with multiplicity 9, we are interested in utilizing the polyhedron $P_9$ for an explicit definition. For further information and more details, see Section \ref{section3} or \cite{K}. 

Our first main theorem, which is stated next, gives a complete classification of which numerical semigroup rings of multiplicity 9  have nontrivial semidualizing modules.

\begin{theorem}[{Theorem \ref{mainthm}}]
A numerical semigroup ring $k \ldb H \rdb$ of multiplicity $9$ has a nontrivial semidualizing module if and only if $\mu(H)$ belongs to one of the interiors of faces $F_1,\dots,F_{24}$ listed in Table~\ref{table4}.
\end{theorem}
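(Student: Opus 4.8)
The plan is to reduce the homological classification to a finite combinatorial computation over Kunz's polyhedron $P_9$ and then to carry out that computation face by face. Since $k \ldb H \rdb$ is a one-dimensional Cohen--Macaulay domain, every semidualizing module is maximal Cohen--Macaulay of rank one (the homothety isomorphism forces $\operatorname{rank}(C)^2 = \operatorname{rank} R = 1$), hence isomorphic to a fractional ideal. First I would normalize further, reducing the search to monomial relative ideals: an $H$-stable subset $E \subseteq \ZZ$ with $0 \in E$ generating $\ZZ$ as a group. Under this identification the homothety isomorphism $R \xrightarrow{\sim} \Hom_R(E,E)$ becomes the semigroup equality $(E:E) = H$, where $(E:E) = \{z \in \ZZ : z + E \subseteq E\}$, while the trivial modules $R$ and the canonical module $K$ correspond to distinguished relative ideals. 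Thus ``$E$ is a nontrivial semidualizing ideal'' becomes the combinatorial package: $(E:E) = H$, $E \not\sim R$, $E \not\sim K$, and $\Ext_R^i(E,E) = 0$ for all $i > 0$.

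Second, I would make the $\Ext$-vanishing condition checkable. Building on the $\Ext^1$ criterion of \cite[Lemma 4.6]{HW}, the condition $\Ext_R^1(E,E) = 0$ can be read off combinatorially from $E$ and $H$. The real content is upgrading this to vanishing in all positive degrees, and here I would invoke the theory of Burch ideals: if $E$ is Burch, a rigidity property forces $\Ext$-vanishing to collapse $E$ to a free module, contradicting nontriviality, so genuine examples can only occur among non-Burch ideals, where the minimal resolution of $E$ over $R$ is controlled enough to promote $\Ext^1 = 0$ to $\Ext^{>0} = 0$. This step isolates a purely combinatorial predicate $\Phi(E,H)$ equivalent to ``$E$ is a nontrivial semidualizing ideal,'' depending only on finitely many numerical data of $H$.

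Third, I would exploit Kunz's bijection $\mu$ to organize the analysis. Along the relative interior of a fixed face $F$ of $P_9$ the data governing $\Phi$ --- the Ap\'ery set, the defining relations, the pseudo-Frobenius numbers, and hence the finite list of candidate ideals $E$ --- are constant, so whether a nontrivial semidualizing ideal exists depends only on $F$ and not on the particular lattice point $\mu(H)$ in its interior. After proving this face-constancy, I would discard at once the minimal-multiplicity face, where \cite{SW10} already forbids nontrivial examples, and the faces corresponding to symmetric (Gorenstein) semigroups, where $K \cong R$ forces triviality. The remaining faces form a finite list; evaluating $\Phi$ on each and keeping exactly those admitting a surviving nontrivial example produces the faces $F_1,\dots,F_{24}$ of Table~\ref{table4}, with the ideal of Example~\ref{ex7.3} serving as the model case.

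I expect the main obstacle to be twofold. The face structure of $P_9$ is large, so organizing the argument to remain a finite check over faces --- rather than over the infinitely many lattice points --- is delicate, and the face-constancy lemma is precisely what tames this. The subtler difficulty is the passage from $\Ext_R^1(E,E) = 0$ to full $\Ext_R^{>0}(E,E) = 0$: the homothety isomorphism and first $\Ext$ vanishing are routine to verify combinatorially, but controlling all higher $\Ext$ via the Burch dichotomy, or via an explicit eventually-understood resolution of $E$, is where the real work lies, and it is what ultimately pins the answer down to exactly $24$ faces rather than a larger collection.
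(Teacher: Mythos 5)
Your high-level skeleton (reduce to rank-one fractional ideals, use Kunz's face-constancy to make the problem finite, then check faces) matches the paper's, but there are two genuine gaps. The first and most serious is your reduction ``to monomial relative ideals.'' Nothing justifies this step, and it is in fact posed as an open problem at the end of the paper (Question \ref{question:monomial}): it is not known whether every nontrivial semidualizing module over a numerical semigroup ring must be generated by monomials. For the existence direction the restriction is harmless, since one only needs to exhibit one monomial example per face; but for the non-existence direction one must rule out \emph{all} fractional ideals, not just the $H$-stable subsets of $\ZZ$. This is exactly where the paper has to work hardest: for $H=\langle 9,19,20,25,31\rangle$ (no.\ 36 of Table~\ref{table3}) the Artinian quotient is not Burch, so no soft argument applies, and Proposition \ref{sample36} runs a valuation-theoretic analysis of an arbitrary $2$-generated ideal $I=(1,f)$, with $f$ \emph{not} assumed to be a power of $t$, deriving a contradiction from $v(f)+v(h)\notin v(K)$. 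Your proposal has no mechanism to handle this case.

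The second gap is that your Burch mechanism is misplaced, and your preliminary eliminations are too weak to make the face check feasible. The paper applies Burchness to the ring, not to the candidate ideal: if the defining ideal $I_H\subseteq k[\![x,y,z,w]\!]$ of the Artinian quotient $A_H=R_H/t^9R_H$ is a Burch ideal, then $A_H$ (hence $R_H$) has only trivial semidualizing modules (Corollary \ref{burchsemi}), and this disposes of all but $7$ of the candidate faces. It is not a dichotomy on $E$, and it is not what promotes $\Ext^1_R(E,E)=0$ to $\Ext^{>0}_R(E,E)=0$ in the existence cases; that promotion is carried out by an explicit periodicity argument using the sequences $0\to R:I\to R^{\oplus 2}\to I\to 0$ and $0\to I\to R:I\to R/(R:I)\to 0$ together with two applications of the Huneke--Wiegand torsion-freeness criterion via Proposition \ref{prop6.3}. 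Finally, discarding only the minimal-multiplicity and Gorenstein (symmetric) faces leaves vastly too many cases: the crucial numerical restriction is Proposition \ref{mult8}, which forces type $4$ and embedding dimension $5$ for any multiplicity-$9$ ring admitting a nontrivial semidualizing module, and it is this, together with the rules (R1)--(R9), that cuts the enumeration down to the finitely many rows of Tables 1--3.
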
 

In our proof of Theorem~\ref{mainthm}, we use Kunz’s bijection $\mu$ to construct a bijection between families of numerical semigroups and the faces on Kunz’s polyhedron. We show in Proposition~\ref{kunz2.3} that if one member of the family has a non-trivial semidualizing module, then all members of the family have one as well. 
In Table~\ref{table2}, we identify all relevant faces of $P_9$ and a sample member of the corresponding family. 

We then identify which samples of Table~\ref{table2} are Burch (see Definition~\ref{defburch}). We do this since Corollary~\ref{burchsemi} shows that Burch rings only possess the trivial semidualizing modules. This information is assembled in Table~\ref{table3}, which shows that, up to automorphisms of $\mathbb Z/9\mathbb Z$, there are only 7 cases that we need to investigate. Example~\ref{ex7.3}, Proposition~\ref{sample36}, and Proposition~\ref{p45} indicate that 6 of these cases have nontrivial semidualizing modules.

We note that not only does this theorem completely classify numerical semigroups of multiplicity 9 exhibiting nontrivial semidualizing modules, but it also provides the foundation for our other two main theorems.

\begin{theorem}[{Theorem \ref{cor:allmult}}]
For $9 \leq a \in \mathbb{N}$, there exists a numerical semigroup ring $k \ldb H \rdb$ with $e(k \ldb H \rdb) = a$ such that $k \ldb H \rdb$ has a nontrivial semidualizing module.
\end{theorem}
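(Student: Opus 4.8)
The plan is to exhibit, for every $a\ge 9$, a single explicit numerical semigroup that extends the base example of Example~\ref{ex7.3} and on which the fractional ideal $(1,t)$ remains a nontrivial semidualizing module; the case $a=9$ is then exactly Example~\ref{ex7.3}. For $a\ge 9$ I would set
\[
H_a=\{0\}\cup\{a,a+1,a+2,a+3\}\cup\{a+6\}\cup\{n\in\NN: n\ge 2a\},
\]
so that $H_9$ recovers $\langle 9,10,11,12,15\rangle$. The first step is to check that each $H_a$ is a numerical semigroup of multiplicity $a$: it is cofinite since it contains $[2a,\infty)$, and it is additively closed because any two positive elements sum to at least $2a$. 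Writing $R_a=k\ldb H_a\rdb$ and $I_a=(1,t)=R_a+R_a t$, the task reduces to showing that $I_a$ is a nontrivial semidualizing $R_a$-module for all $a\ge 9$.

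I would then reprove, with $a$ as a parameter, the three ingredients that make up Example~\ref{ex7.3}. For the homothety isomorphism $R_a\cong\Hom_{R_a}(I_a,I_a)$ I would identify the right-hand side with the idealizer of $I_a$ inside the total ring of quotients and translate it into the purely combinatorial statement $\{n\in\ZZ: n+T\subseteq T\}=H_a$, where $T=H_a\cup(H_a+1)$ is the value set of $I_a$. Inspecting the bottom of $T$, namely
\[
T\cap[0,2a+1]=\{0,1\}\cup\{a,a+1,a+2,a+3,a+4\}\cup\{a+6,a+7\}\cup\{2a,2a+1\},
\]
one sees that the gaps $a+5$ and $a+8,\dots,2a-1$ of $T$ are present for every $a\ge 9$ (they first appear exactly at $a=9$), and that each candidate $n\ge 1$ with $n\notin H_a$ sends some element of $T$ into one of these gaps; this forces the idealizer down to $R_a$. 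For the self-$\Ext$ vanishing I would apply the same mechanism as in Example~\ref{ex7.3}, using \cite[Lemma 4.6]{HW} together with the structural input from \cite{GTTT}, now checking its combinatorial hypothesis uniformly in $a$.

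Nontriviality is the step where the family is actually cleaner than the single example. Since $I_a$ genuinely requires the two generators $1$ and $t$, it is not principal, so $I_a\not\cong R_a$. To separate $I_a$ from the canonical module I would compute the Cohen--Macaulay type of $R_a$ through its pseudo-Frobenius set: the jumps of $H_a$ at $a+3$ and $a+6$ eliminate every gap below $a$, leaving
\[
\operatorname{PF}(H_a)=\{a+4,a+5\}\cup\{a+7,a+8,\dots,2a-1\},
\]
so that $\operatorname{type}(R_a)=a-5$. Since $\mu(K_{R_a})=\operatorname{type}(R_a)=a-5\ge 4>2=\mu(I_a)$ for all $a\ge 9$, the module $I_a$ cannot be the canonical module, and hence it is nontrivial.

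I expect the $\Ext$-vanishing to be the main obstacle. The homothety and nontriviality steps both collapse to transparent arithmetic in $H_a$ that is visibly uniform in $a$, but the vanishing of $\Ext^{\ge 1}_{R_a}(I_a,I_a)$ in Example~\ref{ex7.3} relies on input from \cite{GTTT} stated only for the single ring $R_9$. The real work is to re-derive the hypothesis of \cite[Lemma 4.6]{HW} for the whole family, i.e.\ to verify that the property forcing $\Ext^1_{R_a}(I_a,I_a)=0$ (and with it all higher self-extensions) persists under the increment $a\mapsto a+1$. Should this uniform persistence turn out to be delicate, the fallback is to split the family into finitely many residue classes of $a$ modulo a small modulus and treat each by the same combinatorial template, which still produces a nontrivial semidualizing module in every multiplicity $a\ge 9$.
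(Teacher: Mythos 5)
Your strategy is genuinely different from the paper's (which glues a fixed multiplicity-$9$ semigroup $H$ into $H'=\langle a,bH\rangle$ and transports the nontrivial semidualizing module along the resulting finite free, hence faithfully flat, extension via Corollary \ref{cor:highermult} and Theorem \ref{fflat}), but your explicit family does not work, and the obstruction is already visible in your own computation of the type. You correctly find $\operatorname{PF}(H_a)=\{a+4,a+5\}\cup\{a+7,\dots,2a-1\}$, so $r(R_{H_a})=a-5$ grows with $a$. By Lemma \ref{dual}(3), any semidualizing module $C$ satisfies $\mu(C)\mu(C^\vee)=r(R)$, so a $2$-generated semidualizing module can exist only when $2$ divides $a-5$, i.e.\ when $a$ is odd. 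For every even $a\ge 10$ the ideal $I_a=(1,t)$ cannot be semidualizing, regardless of how the homothety and $\Ext$-vanishing checks come out (at least one of them must in fact fail). Worse, whenever $a-5$ is prime (e.g.\ $a=10,12,16,18$), Lemma \ref{dual}(7) shows that $R_{H_a}$ has \emph{no} nontrivial semidualizing module at all, so the defect cannot be repaired by choosing a different ideal on the same ring: the ring itself is the wrong one. You invoke $\mu(K)=a-5>2$ only to separate $I_a$ from $K$, but the very identity $\mu(K)=r(R)$ underlying that step is what kills the construction.

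The structural reason is that $H_a$ pushes the conductor out to $2a$, which makes the socle of the Artinian reduction grow linearly in $a$; any family that is to carry a fixed-size semidualizing module must instead keep the type controlled (equal to $4$, or at least of bounded, composite size). That is exactly what the paper's gluing achieves: $k \ldb \langle a,bH\rangle \rdb \cong k\ldb H\rdb[x]/(x^b - t^a)$ is a free $k\ldb H\rdb$-module, so the nontrivial semidualizing module of the multiplicity-$9$ base ascends, and the only remaining work is a short case analysis (the paper uses four different multiplicity-$9$ bases to handle $a\in\{13,14,16,17\}$, since one needs $a\in H$, $\gcd(a,b)=1$ and $9b>a$). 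If you wish to salvage a direct, uniform construction, you would need to redesign $H_a$ so that its pseudo-Frobenius set has exactly four elements for every $a$, and then rerun your (otherwise sensible) value-semigroup computations for the homothety and the $\Ext$-vanishing.
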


We extend the above theorem one step further with our final main theorem. The comprehensive statement of this theorem (see Theorem~\ref{thm:semidualglue}) constructs a nontrivial semidualizing module for a numerical semigroup ring  resulting from a gluing. However, for conciseness, we present an abridged version below.

\begin{theorem}[{Theorem \ref{thm:semidualglue}}]
If the numerical semigroup ring $k \ldb H \rdb$ is obtained from a gluing of the numerical semigroup rings $k \ldb H_1 \rdb$ and $k \ldb H_2 \rdb$ (see Definition \ref{def:gluing}), then one can construct a semidualizing module for $k \ldb H \rdb$ from the semidualizing modules of $k \ldb H_1 \rdb$ and $k \ldb H_2 \rdb$. Moreover, if either $k \ldb H_1 \rdb$ or $k \ldb H_2 \rdb$ has a nontrivial semidualizing module, then this construction is also nontrivial.
\end{theorem}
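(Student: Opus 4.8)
The plan is to realize the gluing as a hypersurface section of a (completed) tensor product, and then to transport semidualizing modules along the two operations involved—tensor product over $k$ and reduction modulo a single regular element—each of which preserves the semidualizing property.

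First I would make the ring-theoretic structure of the gluing explicit. Write $R_i = k\ldb H_i\rdb$ and $R = k\ldb H\rdb$, and let $p\in H_2$, $q\in H_1$ be the data of the gluing of Definition~\ref{def:gluing}. Since $q\in H_1$ and $p\in H_2$, both $t^{q}\in R_1$ and $t^{p}\in R_2$ make sense, and the element $pq$ lies in both $pH_1$ and $qH_2$. I would use this to produce a presentation
$$ R \;\cong\; T/(w), \qquad T := R_1 \mathbin{\widehat{\otimes}}_k R_2, \qquad w := t^{q}\otimes 1 - 1\otimes t^{p}, $$
where the completed tensor product $T = k\ldb \mathbf x,\mathbf y\rdb/(I_1+I_2)$ is a complete local Cohen--Macaulay ring of dimension $2$ with residue field $k$. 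Under the surjection $T\twoheadrightarrow R$ both $t^{q}\otimes 1$ and $1\otimes t^{p}$ map to $t^{pq}\in R$, so $w\mapsto 0$; the defining relations of a gluing are precisely what guarantees that $(w)$ is the entire kernel. As $T$ is Cohen--Macaulay with $\dim T/(w) = \dim R = 1 = \dim T - 1$, the element $w$ avoids every associated (equivalently minimal) prime of $T$ and is therefore a nonzerodivisor.

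Next I would assemble the candidate module. Given semidualizing modules $C_1$ over $R_1$ and $C_2$ over $R_2$, the module $C_1 \mathbin{\widehat{\otimes}}_k C_2$ is semidualizing over $T$: a K\"unneth formula over the field $k$ gives $\Ext^i_T(C_1\mathbin{\widehat{\otimes}}_k C_2,\, C_1\mathbin{\widehat{\otimes}}_k C_2) \cong \bigoplus_{s+t=i}\Ext^s_{R_1}(C_1,C_1)\otimes_k \Ext^t_{R_2}(C_2,C_2)$, so the higher $\Ext$ vanish while the degree-zero term yields the homothety isomorphism $T \xrightarrow{\ \sim\ } \Hom_T(C_1\mathbin{\widehat{\otimes}}_k C_2, C_1\mathbin{\widehat{\otimes}}_k C_2)$. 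Because semidualizing modules over a Cohen--Macaulay local ring are faithful and maximal Cohen--Macaulay, $C_1\mathbin{\widehat{\otimes}}_k C_2$ is a maximal Cohen--Macaulay $T$-module on which $w$ is again a nonzerodivisor. The standard reduction principle for semidualizing modules modulo a regular element then shows that
$$ C \;:=\; \bigl(C_1 \mathbin{\widehat{\otimes}}_k C_2\bigr)\big/\, w\bigl(C_1 \mathbin{\widehat{\otimes}}_k C_2\bigr) \;\cong\; R\otimes_T \bigl(C_1 \mathbin{\widehat{\otimes}}_k C_2\bigr) $$
is a semidualizing $R$-module; this is the module output by the construction. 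For nontriviality I would track two invariants: the minimal number of generators $\nu(-)$ and the Cohen--Macaulay type $r(-)$. Each is multiplicative under $\mathbin{\widehat{\otimes}}_k$ (K\"unneth, applied to $\Ext^{\bullet}_{(-)}(k,-)$) and unchanged under reduction by a nonzerodivisor, giving $\nu_R(C) = \nu_{R_1}(C_1)\,\nu_{R_2}(C_2)$ and $r_R(C) = r_{R_1}(C_1)\,r_{R_2}(C_2)$. Since a semidualizing module is faithful, $\nu=1$ forces it to be $\cong R$; and since the duality $\Hom_R(-,K_R)$ interchanges $\nu$ and $r$ on semidualizing modules, $r=1$ forces it to be $\cong K_R$. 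Thus $C\cong R$ would force $\nu(C_1)=\nu(C_2)=1$, i.e. $C_1\cong R_1$ and $C_2\cong R_2$, while $C\cong K_R$ would force $r(C_1)=r(C_2)=1$, i.e. $C_1\cong K_{R_1}$ and $C_2\cong K_{R_2}$. Hence if either $C_1$ or $C_2$ is nontrivial, then $C$ is neither $R$ nor $K_R$, which is the desired conclusion.

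I expect the first step to be the main obstacle: extracting the precise presentation $R\cong T/(w)$, with $w$ the single binomial $t^{q}\otimes 1 - 1\otimes t^{p}$, directly from the combinatorial Definition~\ref{def:gluing}. Concretely, the crux is verifying that the kernel of $R_1\mathbin{\widehat{\otimes}}_k R_2 \twoheadrightarrow R$ is principal and generated exactly by $w$—that the gluing conditions on $(p,q)$ introduce no further relations among the generators $t^{pa_i}, t^{qb_j}$ of $H$ beyond the one coming from $pq\in pH_1\cap qH_2$. Once this presentation is in hand, the remaining steps (the K\"unneth formula over $k$, regularity of $w$ on the maximal Cohen--Macaulay module $C_1\mathbin{\widehat{\otimes}}_k C_2$, and the invariance of $\nu$ and $r$) are routine, so I would devote the bulk of the write-up to this structural identification and treat the transport of the semidualizing property as a formal consequence.
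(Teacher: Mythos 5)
Your argument is correct and is essentially the paper's proof: the gluing is presented as the quotient of the tensor product $k\ldb H_1\rdb\otimes_k k\ldb H_2\rdb$ by the single binomial regular element (this presentation is exactly Proposition \ref{prop:defIdeal}, quoted from \cite[Proposition 2.2]{GS19}, so the step you single out as the main obstacle is already available in the literature), and the semidualizing property is transported first through the tensor product (the paper cites \cite[Theorem 1.2(a)]{A} instead of redoing the K\"unneth computation, and works with the affine semigroup rings before completing) and then through the hypersurface section via \cite[Proposition 5.8]{C}. Your explicit bookkeeping of $\mu$ and the type for the nontriviality claim is a sound elaboration of what the paper leaves to the reader's ``Consequently.''
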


The organization of the paper is as follows. In Section~\ref{prelim}, we give a brief exposition of semidualizing modules, Burch ideals, fractional ideals, and numerical semigroup rings.
We also list several facts used throughout the paper and observe that the numerical semigroup ring $R=k[\![ t^9, t^{10}, t^{11}, t^{12}, t^{15} ]\!]$ given in \cite[Example 7.3]{GTTT} possesses a nontrivial semidualizing module; see Example~\ref{ex7.3}. In Section~\ref{section3}, we recall some definitions and the fundamental concept of Kunz's polyhedron introduced in \cite{K}. 

Section~\ref{section4} systemically reduces the study of infinitely many numerical semigroup rings down to the 7 cases we previously mentioned. This is done by identifying the relevant faces of Kunz’s polyhedron and then using results from Section~\ref{prelim} to further eliminate most of those faces. We analyze each of these cases individually to obtain our main result; see Theorem~\ref{mainthm}.

Finally, in Section~\ref{section5}, we introduce the notion of gluings of numerical semigroup rings. We use these gluings to give a constructive proof of Theorem~\ref{cor:allmult}. This is followed by Theorem~\ref{thm:semidualglue} which can be used to identify a nontrivial semidualizing module for any numerical semigroup ring obtained from Theorem~\ref{cor:allmult}. To demonstrate both these theorems, we provide Example \ref{ex:highmult} to exhibit nontrivial semidualizing modules for numerical semigroup rings with multiplicity 10 up to multiplicity 18. We conclude by asking Question \ref{question:monomial}, originally posed to us by Herzog, regarding the generators of semidualizing modules over numerical semigroup rings.

\section{Preliminaries} \label{prelim}

Throughout this paper, we adopt the following notation unless otherwise specified. 

\begin{notation}
Let $(R,\m)$ be a local ring. We denote the multiplicity of $R$ by $e(R)$ and the embedding dimension of $R$ by $\mathrm{edim}(R)$. If $R$ is Cohen--Macaulay, we use $r(R)$ to denote the (Cohen--Macaulay) type of $R$.  For an $R$-module $M$, $\mu_R(M)$ denotes the cardinality of a system of minimum generators of $M$, $\ell_R(M)$ denotes the length of $M$, and $M^{\oplus n}$ is the direct sum of $n$-copies of $M$ for an integer $n\ge 1$. 
\end{notation}

In this section, we give basic definitions and known results utilized in this paper concerning semidualizing modules and numerical semigroup rings. Furthermore, we provide background information on Burch ideals and fractional ideals, which will be relevant for later sections. 

\subsection{Semidualizing Modules} 

Let $(R, \m)$ be a Cohen-Macaulay local ring with a canonical module $K$. 

\begin{definition}
A finitely generated $R$-module $C$ is \emph{semidualizing} if it satisfies the following: 

\begin{enumerate}[ \rm (a)]
\item The natural homothety map $\chi_C^R:R \rightarrow \Hom_R(C, C)$ which is given by $\chi_C^R(r)(c)=rc$ is an $R$-module isomorphism; and 
\item $\Ext_R^i(C,C)=0$ for all $i>0$. 
\end{enumerate}

\end{definition} 

It follows from this definition that the $R$-modules $R$ and $K$ are semidualizing. \\

We say that $R$ has a {\it nontrivial semidualizing module} $C$ if $C\not \cong R$ and $C\not \cong K$. We refer to modules $C\cong R$ or $C\cong K$ as {\it trivial}. Note that $C\cong K$ if and only if $C$ is semidualizing and has finite injective dimension. Also $C\cong R$ if and only if $C$ is semidualizing and has finite projective dimension.

\begin{remark}

Over a Gorenstein ring R, semidualizing modules are trivial because every semidualizing $R$-module is isomorphic to $R$, see \cite[(8.6)]{C}. \end{remark}

We state here some basic properties of semidualizing modules and their canonical duals.

\begin{lemma} \label{dual}
Let $R$ be a Cohen--Macaulay local ring with a canonical module $K$.
Write the functor $\Hom_R(-,K)$ as $(-)^\vee$.
Let $C$ be a semidualizing module over $R$.
Then 
\begin{enumerate}[ \rm(1)]
\item $C^\vee$ is semidualizing.
\item $C \otimes_R C^\vee \cong K$.
\item $\mu_R(C)\mu_R(C^\vee)=r(R)$.
\item $C\cong R$ if and only if $\mu_R(C)=1$.
\item $C\cong K$ if and only if $\mu_R(C)=r(R)$.
\item $C$ is trivial if and only if $C^\vee$ is trivial.
\item If $r(R)$ is a prime number (e.g. $r(R) \le 3$), then $C$ must be trivial.
\end{enumerate}
\end{lemma}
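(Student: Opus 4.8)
The plan is to prove the seven parts of Lemma~\ref{dual} by first establishing the two structural facts (1) and (2), and then deriving the remaining numerical statements (3)--(7) as formal consequences. Since $C$ is semidualizing, the functor $(-)^\vee = \Hom_R(-,K)$ is especially well-behaved on $C$: the key input I would invoke is that for a semidualizing module $C$ over a Cohen--Macaulay local ring with canonical module $K$, one has $\Ext^i_R(C,K)=0$ for all $i>0$ (because $K$ has finite injective dimension, any module is ``$K$-reflexive'' in the appropriate derived sense, and $C$ being maximal Cohen--Macaulay forces the higher Ext to vanish). Granting this, $C^\vee$ is again maximal Cohen--Macaulay, and the biduality map $C \to (C^\vee)^\vee$ is an isomorphism.

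For part (1), I would verify the two defining conditions for $C^\vee$. The isomorphism $R \xrightarrow{\sim} \Hom_R(C^\vee, C^\vee)$ and the vanishing $\Ext^i_R(C^\vee,C^\vee)=0$ for $i>0$ both follow from the standard ``swap'' or duality isomorphism $\Ext^i_R(C^\vee,C^\vee) \cong \Ext^i_R(C,C)$, valid because $K$ is dualizing; the $i=0$ case recovers the homothety isomorphism via $\Hom_R(C^\vee,C^\vee)\cong \Hom_R(C,C)\cong R$. For part (2), I would use that $C \otimes_R C^\vee \cong \Hom_R(\Hom_R(C,K),K)\otimes\cdots$, or more cleanly, invoke the fact that tensoring a semidualizing module with its canonical dual recovers $K$; concretely, one checks $C\otimes_R C^\vee \to \Hom_R(C^\vee, (C^\vee)^\vee) = \Hom_R(C^\vee, C)$ and pins down the result as $K$ using the reflexivity and the semidualizing property. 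I expect (2) to be the main obstacle, since it requires controlling the interaction of $\otimes$ and $\Hom$ against $K$, and verifying the relevant Tor-vanishing $\Tor_i^R(C,C^\vee)=0$ for $i>0$ to ensure the tensor product behaves like the derived tensor product.

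Once (1) and (2) are in hand, the numerical parts are formal. For (3), I would apply $-\otimes_R k$ to the isomorphism $C\otimes_R C^\vee \cong K$ from (2), using Nakayama to read off $\mu_R(C)\cdot\mu_R(C^\vee) = \mu_R(K) = r(R)$, where the last equality is the standard identification of the Cohen--Macaulay type with the minimal number of generators of the canonical module. For (4), the forward direction is immediate; conversely, if $\mu_R(C)=1$ then $C$ is cyclic, so $C\cong R/\operatorname{ann}(C)$, and the homothety isomorphism $R\cong\Hom_R(C,C)$ forces $\operatorname{ann}(C)=0$, giving $C\cong R$. For (5), I would combine (4) applied to $C^\vee$ with (3): if $\mu_R(C)=r(R)$ then (3) gives $\mu_R(C^\vee)=1$, so $C^\vee\cong R$ by (4), and dualizing yields $C\cong R^\vee\cong K$; the converse is the known value $\mu_R(K)=r(R)$. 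Part (6) is then immediate from (4), (5), and the duality $C\cong (C^\vee)^\vee$, together with part (1). Finally, for (7), if $r(R)$ is prime then the factorization $\mu_R(C)\mu_R(C^\vee)=r(R)$ from (3) forces one factor to equal $1$ and the other to equal $r(R)$; by (4) and (5) the module $C$ (or $C^\vee$) is trivial, and (6) finishes the argument. The parenthetical $r(R)\le 3$ is covered since $1,2,3$ are all prime (with $r(R)=1$ the Gorenstein case handled by the Remark above).
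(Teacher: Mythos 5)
Your proposal is correct and follows essentially the same route as the paper: parts (1) and (2) are the structural inputs (which the paper simply cites from Christensen and Gerko, while you sketch the standard MCM-duality and Tor-vanishing arguments behind them), and parts (3)--(7) are then derived formally exactly as the paper does — (3) by applying $-\otimes_R k$ to (2), (4) from faithfulness of $C$ via the homothety isomorphism, and (5)--(7) by combining (3), (4), and reflexivity. The only difference is one of exposition, not of substance.
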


\begin{proof}
(1): It follows from \cite[Corollary (2.12)]{C}.

(2): It follows from \cite[Theorem 3.1]{Ger2}; see also \cite[Fact 2.4]{JLSW}.

(3): This can be established by computing $\mu_R$ of both sides of (2).

(4): Since $C$ is faithful (\cite[Proposition 1.4.4]{SW10}), the assertion follows immediately.

(5),(6),(7): These assertions follow by parts (3) and (4).
\end{proof}

In the rest of this subsection, we give a couple of necessary conditions on local rings to have nontrivial semidualizing modules.
We begin with an observation about Artinian rings.

\begin{remark} \label{socle}
If $(R,\mathfrak{m})$ is an Artinian local ring such that its socle $\mathrm{soc}(R)$ is not contained in $\mathfrak{m}^2$, then it is easy to see that $R$ is isomorphic to a ring of the form $S \ltimes k$ for some local ring $S$. 
It follows from \cite[Corollary 3.5]{NY} that $R$ has only trivial semidualizing modules.
\end{remark}

The following result shows that local rings of small embedding dimension have only trivial semidualizing modules.

\begin{prop}[{\cite{NSW}, c.f. \cite[Proposition 4.10]{LM}}] \label{ecodepth}
Let $(R,\mathfrak{m})$ be a local ring with $\mathrm{edim}(R)-\depth(R)\le 3$.
Then $R$ has only trivial semidualizing modules.
\end{prop}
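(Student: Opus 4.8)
The plan is to reduce to an Artinian local ring of embedding dimension at most $3$ and then invoke the homological structure theory of such rings. First I would reduce to the Artinian case. Since $\mathrm{edim}$, $\depth(R)$, $r(R)$ and $\mu_R(-)$ are all preserved by completion, and a nontrivial semidualizing $R$-module completes to a semidualizing $\widehat R$-module that is again nontrivial by Lemma~\ref{dual}(4),(5), it suffices to treat $R$ complete; in particular $R$ has a canonical module $K$. A semidualizing module $C$ is faithful and maximal Cohen--Macaulay, so every $R$-regular element $x\in\m$ is $C$-regular, $C/xC$ is semidualizing over $R/xR$, and $K/xK$ is the canonical module of $R/xR$. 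As $\mu_R(C)=\mu_{R/xR}(C/xC)$ and $r(R)=r(R/xR)$, Lemma~\ref{dual}(4),(5) shows that $C$ is trivial over $R$ exactly when $C/xC$ is trivial over $R/xR$. Cutting by a maximal $R$-regular sequence therefore reduces the statement to an Artinian local ring $\overline R$ with $\mathrm{edim}(\overline R)=\mathrm{edim}(R)-\depth(R)\le 3$.

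It then remains to show that an Artinian local ring $\overline R$ of embedding dimension at most $3$ has only trivial semidualizing modules. If $\mathrm{edim}(\overline R)\le 1$, then $\overline R$ is a field or has principal maximal ideal, hence is Gorenstein, and all semidualizing modules are trivial by \cite[(8.6)]{C}. If $\mathrm{edim}(\overline R)=2$, then it is classical that $\overline R$ is either a complete intersection, hence Gorenstein and again covered, or a Golod ring; and over a Golod non-Gorenstein ring every semidualizing module is trivial, which is part of the structure theory underlying \cite{JLSW}. Thus the essential case is $\mathrm{edim}(\overline R)=3$.

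\emph{The main obstacle is the codepth-$3$ case}, in which $\overline R$ may be neither Gorenstein nor Golod. Here I would appeal to the Avramov--Kustin--Miller--Weyman classification of the Koszul homology (Tor) algebra of $\overline R$ into its finitely many isomorphism classes, together with the principle behind \cite{JLSW} that a nontrivial semidualizing module induces a nontrivial multiplicative decomposition of this graded-commutative algebra. The heart of the argument is to verify, class by class, that outside the Gorenstein and Golod types already handled no such decomposition exists, so that $\mu_{\overline R}(C)\in\{1,\,r(\overline R)\}$ and $C$ is trivial by Lemma~\ref{dual}(4),(5). Lemma~\ref{dual}(3),(7) furnishes a useful preliminary constraint---any nontrivial $C$ forces $\mu_{\overline R}(C)$ to be a proper divisor of $r(\overline R)$, so $r(\overline R)$ must be composite---but this numerical test is far from sufficient, and ruling out the algebra factorization in the remaining codepth-$3$ classes is where the real work lies.
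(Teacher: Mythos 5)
The paper does not prove this proposition at all: it is imported wholesale from \cite{NSW} (with the pointer to \cite[Proposition 4.10]{LM}), so there is no internal argument to compare yours against. Measured as a self-contained proof, your proposal has a genuine gap exactly where you flag it: the codepth-$3$ Artinian case, which is the entire substance of the theorem, is reduced to an appeal to the Avramov--Kustin--Miller--Weyman classification of Tor algebras and a class-by-class verification that no nontrivial multiplicative decomposition exists --- and that verification is never performed. That case analysis \emph{is} the proof in \cite{NSW}; everything before it (completion, cutting a regular sequence, the Gorenstein and Golod cases in codepth $\le 2$) is standard reduction. So the proposal is an accurate roadmap of the cited argument rather than a proof of it.

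Two smaller points on the reductions. First, to get $\mathrm{edim}(\overline R)=\mathrm{edim}(R)-\depth(R)$ you must choose the regular sequence inside $\mathfrak m\setminus\mathfrak m^2$ (extendable to a minimal generating set of $\mathfrak m$); a regular element in $\mathfrak m^2$ makes the codepth go up, not down. Such a choice is possible by prime avoidance (or after a faithfully flat extension to an infinite residue field, which is harmless by Theorem~\ref{fflat}), but it needs to be said. Second, your passage from ``maximal regular sequence'' to ``Artinian quotient,'' and the use of maximal Cohen--Macaulayness of $C$ and of the canonical module $K$, tacitly assume $R$ is Cohen--Macaulay; that matches the standing hypothesis of Section~\ref{prelim} and every use made of the proposition in this paper, but the result as cited from \cite{NSW} is more general (it is a statement about semidualizing complexes over arbitrary local rings), and your reduction does not cover that generality.
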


The next theorem is a result of \cite[Proposition 4.2]{FSW2} and \cite[Fact]{CSW} which uses \cite[Proposition 5.7]{C} and \cite[Theorem 4.5 and Theorem 4.9]{FSW1}.

\begin{theorem}\label{regular}
Let $(R, \m)$ be a Cohen-Macaulay complete local ring and let $\mathbf{x}=x_1, \ldots, x_n \in \m$. If $\mathbf{x}$ is $R$-regular on $R$, then $R/(\mathbf{x})$ has only trivial semidualizing module if and only if $R$ has only trivial semidualizing module. 
\end{theorem}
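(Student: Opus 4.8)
The plan is to reduce the statement about $R/(\mathbf{x})$ to the case of a single regular element by induction on $n$, and then to prove the single-element case by relating semidualizing modules over $R$ to those over $R/(x)$ via the correspondence $C \mapsto C/xC$ and its inverse given by lifting. Since $R$ is Cohen--Macaulay and complete, it admits a canonical module $K$, and modding out by the regular sequence $\mathbf{x}$ preserves Cohen--Macaulayness, so $K/\mathbf{x}K$ is a canonical module for $R/(\mathbf{x})$; this lets me keep track of the trivial modules on both sides. The inductive step is clean: if $x_1,\dots,x_n$ is $R$-regular, then $x_2,\dots,x_n$ is $R/(x_1)$-regular, and $R/(\mathbf{x}) \cong (R/(x_1))/(x_2,\dots,x_n)$, so it suffices to handle $n=1$.

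For the single-element case, write $\bar R = R/(x)$ where $x$ is a nonzerodivisor in $\m$. The core of the argument is a bijection (respecting triviality) between isomorphism classes of semidualizing $R$-modules and isomorphism classes of semidualizing $\bar R$-modules. First I would show that if $C$ is semidualizing over $R$, then $\bar C := C/xC$ is semidualizing over $\bar R$: since $x$ is regular on both $R$ and $C$ (as $C$ is a maximal Cohen--Macaulay $R$-module, being semidualizing over a Cohen--Macaulay ring), one uses the standard base-change isomorphisms for $\Ext$ along a regular element, namely $\Ext^i_{\bar R}(\bar C, \bar C) \cong \Ext^i_R(C,C)\otimes_R \bar R$ together with vanishing of $\Ext^{i}_R(C,\bar C)$ in the relevant degrees, to transfer both the $\Ext$-vanishing condition and the homothety isomorphism. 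This is essentially the content of \cite[Proposition 5.7]{C} and \cite[Theorem 4.5 and Theorem 4.9]{FSW1} cited just before the statement, so I would invoke those to get that $\bar C$ is semidualizing over $\bar R$ and that $C$ is trivial over $R$ if and only if $\bar C$ is trivial over $\bar R$.

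For the converse direction I would use completeness in an essential way. Given a semidualizing $\bar R$-module $D$, one must produce a semidualizing $R$-module lifting it. Here the reference \cite[Proposition 4.2]{FSW2} and \cite[Fact]{CSW} supply the deformation/lifting statement: over a complete local ring, every semidualizing module over $R/(x)$ lifts to a semidualizing module over $R$, and this lift is unique up to isomorphism. Completeness guarantees that the obstruction to lifting (which lives in an appropriate $\Ext$ or Andr\'e--Quillen cohomology group) is controlled and that a formal lift algebraizes. Combining the two directions gives a triviality-preserving bijection on the level of isomorphism classes, which immediately yields the claimed equivalence: $\bar R$ has a nontrivial semidualizing module if and only if $R$ does, equivalently, $\bar R$ has only trivial semidualizing modules if and only if $R$ does.

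The main obstacle is the lifting direction, and the reason completeness is hypothesized. The forward direction $C \mapsto C/xC$ is a routine application of base change for $\Ext$ along a regular element and presents no real difficulty. The delicate point is that an arbitrary semidualizing module over $\bar R$ need not descend from one over $R$ unless one has enough room to solve the lifting problem; this is exactly where one needs the completeness of $R$ to invoke the deformation theory of semidualizing modules from \cite{FSW1, FSW2, CSW}. Since the statement explicitly cites these results, my plan is to package the forward direction myself and then cite \cite[Proposition 4.2]{FSW2}, \cite[Fact]{CSW}, \cite[Proposition 5.7]{C}, and \cite[Theorem 4.5 and Theorem 4.9]{FSW1} to secure the converse and the bijection, rather than redeveloping the deformation theory from scratch.
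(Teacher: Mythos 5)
Your proposal is correct and follows essentially the same route as the paper, which offers no independent proof but simply assembles the result from \cite[Proposition 4.2]{FSW2}, \cite[Fact]{CSW}, \cite[Proposition 5.7]{C}, and \cite[Theorems 4.5 and 4.9]{FSW1}: the forward direction is base change of semidualizing modules along a regular element, and the converse is the lifting of semidualizing modules over a complete ring, with triviality tracked via $R/(\mathbf{x})$ and $K/\mathbf{x}K$ exactly as you describe. Your reduction to $n=1$ and your identification of the lifting step as the place where completeness is essential match the content of the cited results.
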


We also record a result about faithfully flat extensions.

\begin{theorem}[{\cite[Theorem 4.5 and Theorem 4.9]{FSW1}}] \label{fflat}
Let $R \to S$ be a faithfully flat local homomorphism between Cohen--Macaulay local rings.
If $S$ has only trivial semidualizing module, then so does $R$.
\end{theorem}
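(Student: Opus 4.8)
The plan is to push a semidualizing $R$-module up to $S$ by base change, invoke the hypothesis on $S$ to force it to be trivial there, and then descend triviality back to $R$. Let $C$ be a semidualizing $R$-module and set $C_S := C \otimes_R S$. I would first check that $C_S$ is semidualizing over $S$: since $R \to S$ is flat and $C$ is finitely generated over the Noetherian ring $R$, flat base change gives $\Ext^i_S(C_S, C_S) \cong \Ext^i_R(C,C)\otimes_R S$ for all $i \ge 0$. For $i>0$ the right-hand side is $0$, and for $i=0$ the base-change isomorphism identifies the $S$-homothety map $\chi_{C_S}^S$ with $\chi_C^R \otimes_R S$, which is an isomorphism because $\chi_C^R$ is and $-\otimes_R S$ is exact. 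Thus $C_S$ is semidualizing, and by hypothesis $C_S \cong S$ or $C_S \cong K_S$.

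For the descent I would exploit that minimal numbers of generators are preserved. Writing $k=R/\m$ and $\ell = S/\n$, the local condition $\m S \subseteq \n$ yields $C_S \otimes_S \ell \cong (C/\m C)\otimes_k \ell$, so $\mu_S(C_S) = \mu_R(C)$. If $C_S \cong S$ then $\mu_R(C) = 1$, hence $C \cong R$ by Lemma~\ref{dual}(4). If $C_S \cong K_S$ then $\mu_R(C) = r(S)$. Since $R$ and $S$ are Cohen--Macaulay and $R \to S$ is flat local, the closed fiber $F = S/\m S$ is Cohen--Macaulay and the type is multiplicative, $r(S) = r(R)\,r(F) \ge r(R)$. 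But Lemma~\ref{dual}(3) forces $\mu_R(C) \le r(R)$, so $r(R) \ge \mu_R(C) = r(S) \ge r(R)$; equality throughout gives $\mu_R(C) = r(R)$ and hence $C \cong K_R$ by Lemma~\ref{dual}(5). Either way $C$ is trivial.

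I expect the descent in the canonical case to be the crux. The difficulty is that the statement $C_S \cong K_S$ encodes information about $S$ and its fiber that must be disentangled from the information about $R$; the argument above resolves this by pitting the multiplicativity $r(S)=r(R)\,r(F)$ against the bound $\mu_R(C)\le r(R)$ coming from the canonical dual, which simultaneously forces $F$ to be Gorenstein and $C$ to be canonical. A cleaner-looking but technically heavier alternative descends finite homological dimensions: $C_S \cong S$ gives $\operatorname{pd}_S C_S < \infty$, hence $\operatorname{pd}_R C < \infty$ and $C \cong R$, while $C_S \cong K_S$ gives $\operatorname{id}_S C_S < \infty$. Here the genuinely delicate point---and presumably the reason the statement is attributed to \cite{FSW1}---is that descent of \emph{finite injective dimension} along a faithfully flat local homomorphism is far less transparent than for projective dimension and relies on Avramov--Foxby base-change machinery.
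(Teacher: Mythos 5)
Your proof is correct, but there is nothing in the paper to compare it against: Theorem~\ref{fflat} is stated as a quotation of \cite[Theorems 4.5 and 4.9]{FSW1} and no argument is given. The cited proofs work in the generality of semidualizing \emph{complexes} and local homomorphisms of finite flat dimension, via Auslander/Bass classes and derived base change; your argument is a genuinely more elementary, self-contained substitute in the module-theoretic Cohen--Macaulay setting. The ascent step (flat base change of $\Ext$ and of the homothety map, so that $C\otimes_RS$ is semidualizing over $S$) is standard and fine. The descent is where your argument earns its keep: $\mu_S(C\otimes_RS)=\mu_R(C)$, so $C\otimes_RS\cong S$ forces $C\cong R$ by Lemma~\ref{dual}(4), while if $C\otimes_RS\cong K_S$ the squeeze
\[
r(R)\ \ge\ \mu_R(C)\ =\ \mu_S(K_S)\ =\ r(S)\ =\ r(R)\,r(S/\m S)\ \ge\ r(R)
\]
(type multiplicativity for a flat local map with Cohen--Macaulay closed fibre, \cite[Proposition 1.2.16]{BH}, the fibre being Cohen--Macaulay because depth and dimension both add) pins $\mu_R(C)=r(R)$, whence $C\cong K_R$ by Lemma~\ref{dual}(5). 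What the citation buys is generality; what your route buys is a short proof at exactly the level of the paper's Lemma~\ref{dual}.

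One caveat. Lemma~\ref{dual}(3) and (5) are stated for rings \emph{possessing} a canonical module, whereas Theorem~\ref{fflat} only assumes $R$ is Cohen--Macaulay, and it is invoked in Proposition~\ref{mult8} for an arbitrary Cohen--Macaulay local $R$; a priori $K_R$ need not exist, so the squeeze is not available in full generality. Your own "heavier alternative" is precisely what closes this, and it is lighter than you suggest: if $C\otimes_RS\cong K_S$ then $\id_S(C\otimes_RS)<\infty$, and descent of finite injective dimension for the finitely generated module $C$ is painless --- flat base change gives $\Ext^i_R(k,C)\otimes_RS\cong\Ext^i_S(S/\m S,\,C\otimes_RS)=0$ for $i\gg0$, faithful flatness kills $\Ext^i_R(k,C)$, and Bass's criterion yields $\id_RC<\infty$ --- so $C$ is a semidualizing module of finite injective dimension, i.e.\ it \emph{is} a canonical module of $R$ (which in particular then exists, by Foxby--Sharp). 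I would lead with that version rather than defer to it.
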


We then give a necessary condition for Cohen--Macaulay local rings of multiplicity at most $9$ to have nontrivial semidualizing modules.

\begin{prop} \label{mult8}
Let $(R,\mathfrak{m})$ be a Cohen--Macaulay local ring.
\begin{enumerate}[ \rm(1)]
\item If $e(R) \le 8$, then $R$ has only trivial semidualizing modules.
\item If $e(R)=9$ and $R$ has a nontrivial semidualizing module, then $R$ has type $r(R)=4$ and embedding dimension $\mathrm{edim}(R)=4+\dim R$.
\end{enumerate}
\end{prop}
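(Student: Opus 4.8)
The plan is to leverage the reductions already established in this section to eliminate the small cases and to pin down the numerical invariants in the remaining case. Throughout, I would pass to the completion and then, using Theorem~\ref{regular}, reduce modulo a maximal regular sequence so that without loss of generality $R$ is an Artinian Cohen--Macaulay (i.e.\ Artinian) local ring of the same multiplicity $e(R)=\ell_R(R)$, since this reduction preserves both the existence of nontrivial semidualizing modules and the invariants $\mathrm{edim}(R)-\dim R$ and $r(R)$. Here I am using that $e(R)=\ell_R(R/(\mathbf{x}))$ for a minimal reduction $\mathbf{x}$ in the Cohen--Macaulay case, and that passing modulo a regular element drops both $\mathrm{edim}$ and $\dim$ by one while fixing the type. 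So it suffices to treat the Artinian case and show that $\ell_R(R)\le 8$ forces only trivial semidualizing modules, while $\ell_R(R)=9$ together with a nontrivial semidualizing module forces $r(R)=4$ and $\mathrm{edim}(R)=4$.

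For part~(1), suppose $R$ is Artinian with $\ell_R(R)\le 8$ and admits a nontrivial semidualizing module $C$. By Lemma~\ref{dual}(7) we may assume $r(R)$ is not prime, so $r(R)\ge 4$ (as $r(R)=1$ means Gorenstein and $r(R)\in\{2,3\}$ are prime). By Lemma~\ref{dual}(3), $\mu_R(C)\,\mu_R(C^\vee)=r(R)$ with $C$ nontrivial, so by parts~(4) and~(5) we have $1<\mu_R(C)<r(R)$; since $r(R)$ admits a nontrivial factorization, combined with $r(R)\ge 4$ and the requirement that $r(R)$ be composite, the smallest admissible value is $r(R)=4$ with $\mu_R(C)=\mu_R(C^\vee)=2$. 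The key numerical obstruction is then a length count: I would bound $\ell_R(R)$ from below in terms of $\mathrm{edim}(R)$ and $r(R)$. Proposition~\ref{ecodepth} forces $\mathrm{edim}(R)-\depth(R)\ge 4$, i.e.\ $\mathrm{edim}(R)\ge 4$ in the Artinian case, and Remark~\ref{socle} lets me assume $\mathrm{soc}(R)\subseteq \mathfrak{m}^2$. The standard length inequality reading off the Hilbert function then gives
\[
\ell_R(R)\ge 1+\mathrm{edim}(R)+r(R)\ge 1+4+4=9,
\]
where the middle term counts $\dim_k(\mathfrak{m}/\mathfrak{m}^2)=\mathrm{edim}(R)$ and the last counts the socle dimension $\dim_k\mathrm{soc}(R)=r(R)$, these three graded pieces being linearly independent precisely because $\mathrm{soc}(R)\subseteq\mathfrak{m}^2$ makes $\mathrm{soc}(R)$ disjoint from $\mathfrak{m}/\mathfrak{m}^2$. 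This contradicts $\ell_R(R)\le 8$, proving~(1).

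For part~(2), the same chain of inequalities applied with $\ell_R(R)=9$ shows all the displayed inequalities must be equalities: $r(R)=4$ and $\mathrm{edim}(R)=4$ in the Artinian reduction, which unwinds to $\mathrm{edim}(R)=4+\dim R$ in the original ring, and $\mu_R(C)=2$. The delicate point—and the step I expect to be the main obstacle—is justifying the length inequality $\ell_R(R)\ge 1+\mathrm{edim}(R)+r(R)$ rigorously: one must verify that the $k$-subspaces $R/\mathfrak{m}$, $\mathfrak{m}/\mathfrak{m}^2$, and $\mathrm{soc}(R)$ contribute disjointly to the length, which is exactly what the hypothesis $\mathrm{soc}(R)\subseteq\mathfrak{m}^2$ (secured via Remark~\ref{socle}) guarantees, since it places the socle strictly below the degree-one part in the associated graded ring. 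Once this separation is in place, parts~(1) and~(2) follow simultaneously from whether the total is $\le 8$ or exactly $9$.
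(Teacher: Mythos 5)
Your proposal is correct and follows essentially the same route as the paper: reduce to the Artinian case via a minimal reduction (Theorem~\ref{regular}), get $r(R)\ge 4$ from Lemma~\ref{dual} and $\mathrm{edim}\ge 4$ from Proposition~\ref{ecodepth}, use Remark~\ref{socle} to place the socle inside $\mathfrak{m}^2$, and conclude with the length count $\ell(R)\ge 1+\mathrm{edim}(R)+r(R)$. The one technical point you gloss over is that a minimal reduction of $\mathfrak{m}$ generated by a system of parameters (so that $\ell(R/(\mathbf{x}))=e(R)$) requires an infinite residue field, which the paper secures by first passing to a suitable faithfully flat extension via Theorem~\ref{fflat}; completion alone does not fix a finite residue field.
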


\begin{proof}
Assume $R$ is a Cohen--Macaulay local ring having a nontrivial semidualizing module.
Thanks to Theorem \ref{fflat}, we may pass to a suitable faithfully flat extension so that $R$ is assumed to be complete with an infinite residue field.
Take a minimal reduction $(\mathbf{x})$ of $\mathfrak{m}$.
Then $A:=R/(\mathbf{x})$ is an Artinian ring with  $\ell_A(A)=e(R)$.
By Theorem \ref{regular}, $A$ has a nontrivial semidualizing module.
Note that $r(A)=\ell_A(\mathrm{soc}(A))$ and $\mathrm{edim}(A)=\ell_A(\mathfrak{m}_A/\mathfrak{m}_A^2)$, where $\mathrm{soc}(A)$ is the socle of $A$ and $\mathfrak{m}_A$ is the maximal ideal of $A$.
By Lemma \ref{dual} and Proposition \ref{ecodepth}, we have $\ell_A(\mathrm{soc}(A)) \ge 4$ and $\ell_A(\mathfrak{m}/\mathfrak{m}^2)\ge 4$.
Also, $\mathrm{soc}(A)$ is contained in $\mathfrak{m}_A^2$ by Remark \ref{socle}.
This yields inequalities
\[
8 \le \ell_A(\mathrm{soc}(A))+\ell_A(\mathfrak{m}/\mathfrak{m}^2) \le \ell_A(\mathfrak{m}^2)+\ell_A(\mathfrak{m}/\mathfrak{m}^2) = \ell_A(\mathfrak{m})=\ell_A(A)-1=e(R)-1.
\]
In case (1), i.e. $e(R)\le 8$, a contradiction is derived from the above inequalities.

In case (2), i.e. $e(R)=9$, the above inequalities become equalities, which means that $4=\ell_A(\mathrm{soc}(A))=r(R)$ and $4=\mathrm{edim}(A)$.
The latter one induces $\mathrm{edim}(R)=4+\dim R$.
\end{proof}

\subsection{Burch Ideals} 

The notion of Burch rings were introduced in \cite{DKT} and it was inspired by the results of Burch in \cite{B}.  

\begin{definition} \label{defburch}
Let $(R, \m , k)$ be a local ring.
\begin{enumerate}[ \rm(1)]
\item A \emph{Burch ideal} $I$ is an ideal with $\m I \neq \m(I:_R \m)$.
\item $R$ is called a \emph{Burch ring} if there exist a regular local ring $S$, a Burch ideal $I$ of $S$, and a regular sequence $\mathbf{x}$ of $\widehat{R}$, where $\widehat{R}$ denotes the $\m$-adic completion of $R$, such that $\widehat{R}/\mathbf{x}\widehat{R}$ is isomorphic to $S/I$.
\end{enumerate}
\end{definition}

\begin{remark} \label{burchideal}
If $S$ is a local ring and $I$ is a Burch ideal of $S$, then $S/I$ is a Burch ring; see \cite[Remark~2.9]{DKT}. 
\end{remark}

In order to show that Burch rings only possess trivial semidualizing modules, we adopt the following results on Ext over $R$ by modifying their Tor-counterparts in \cite{DKT}. Specifically, the following result is the Ext-version of \cite[Proposition 7.12]{DKT}. The proof is almost exactly the same but is provided for context.

\begin{prop}\label{prop:BurchExt}
Let $(R, \m, k)$ be a Burch ring of depth zero, and let $M$, $N$ be finitely generated $R$-modules. If $\Ext_R^{\ell}(M,N) = 0 = \Ext_R^{\ell + 1}(M,N)$ for some $\ell \geq 3$, then either $M$ is $R$-free or $N$ has finite injective dimension.
\end{prop}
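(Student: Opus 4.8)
The plan is to mirror the proof of the Tor-statement \cite[Proposition 7.12]{DKT}, systematically replacing the role played there by finite projective dimension with finite injective dimension. Assume $M$ is not free; the goal is to show $\id_R N < \infty$. Since $\depth R = 0$, the Auslander--Buchsbaum formula forces $\pd_R M = \infty$: a nonzero finitely generated module of finite projective dimension over a depth-zero local ring must be free. Consequently every syzygy $\Omega^i_R M$ is again nonfree, a fact I will use freely.

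The key structural input is the main theorem of \cite{DKT} for Burch rings: over a Burch ring of depth zero, the residue field $k$ is a direct summand of the second syzygy $\Omega^2_R X$ of every nonfree finitely generated module $X$. I would apply this to $X = \Omega^{\ell-3}_R M$, which is legitimate precisely because $\ell \ge 3$, and which is nonfree by the previous paragraph. This yields that $k$ is a direct summand of $\Omega^{\ell-1}_R M =: L$. This is exactly the point at which the hypothesis $\ell \ge 3$ is consumed.

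Next I would combine this summand with dimension shifting. For $i \ge 1$ there are natural isomorphisms $\Ext^i_R(L,N) \cong \Ext^{i+\ell-1}_R(M,N)$, and since $k$ is a direct summand of $L$, the module $\Ext^i_R(k,N)$ is correspondingly a direct summand of $\Ext^i_R(L,N)$. Taking $i = 1$ and $i = 2$ and invoking the two hypotheses $\Ext^\ell_R(M,N) = 0 = \Ext^{\ell+1}_R(M,N)$, I obtain
\[
\Ext^1_R(k,N) = 0 = \Ext^2_R(k,N).
\]
Here the two consecutive vanishings in the hypothesis translate into two consecutive vanishing Bass numbers of $N$.

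The final step is to deduce $\id_R N < \infty$ from this vanishing, and this is where the argument genuinely dualizes the Tor-case rather than copying it: in the Tor-statement one arrives at $\Tor^R_1(k,N) = 0$, whence $N$ is free, whereas here one must recognize the vanishing of $\Ext^\bullet_R(k,N)$ in the relevant range as the condition detecting finite injective dimension. After the standard reduction to the complete case (completion is faithfully flat and preserves the Burch property, depth, and all the relevant $\Ext$), in the finite-length situation Matlis duality makes this transparent: writing $(-)^{*} = \Hom_R(-,E)$ for $E$ the injective hull of $k$, one has $\Ext^i_R(k,N)^{*} \cong \operatorname{Tor}^R_i(k,N^{*})$, so the vanishing above gives $\operatorname{Tor}^R_1(k,N^{*}) = 0$, forcing $N^{*}$ to be free and hence $\id_R N = \pd_R N^{*} < \infty$. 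I expect this last step --- identifying and justifying the precise detection result for finite injective dimension valid in the stated depth-zero generality (rather than only in the length-finite reduction) --- to be the main point requiring care; everything preceding it is a faithful transcription of the \cite{DKT} argument with $\Ext$ in place of $\Tor$.
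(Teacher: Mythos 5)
There is a genuine gap at the central step. You assert, as ``the main theorem of \cite{DKT}'', that over a depth-zero Burch ring $k$ is a direct summand of $\Omega^2_R X$ for \emph{every} nonfree finitely generated $X$, and you apply this directly to $X=\Omega^{\ell-3}_R M$. That is not what \cite[Proposition 4.2]{DKT} says: it requires the entries of a minimal presentation matrix of $X$ to generate $\m$, a condition that an arbitrary $M$ (or a syzygy of it) need not satisfy. This is precisely why the paper's proof, following \cite[Proposition 7.12]{DKT}, first invokes \cite[Lemma 7.4]{DKT} to manufacture an auxiliary extension $0 \to (\Omega M)^{\oplus n} \to X \to M^{\oplus n} \to 0$ whose presentation matrix does have entries generating $\m$, applies Proposition 4.2 to \emph{that} $X$, and then uses the induced sequence $0 \to (\Omega^3 M)^{\oplus n} \to \Omega^2 X \oplus F \to (\Omega^2 M)^{\oplus n} \to 0$ to squeeze $\Ext^{\ell-2}_R(\Omega^2X,N)$ between $\Ext^{\ell}_R(M,N)$ and $\Ext^{\ell+1}_R(M,N)$. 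That squeeze is exactly where \emph{both} vanishing hypotheses are consumed; in your route the second vanishing enters only redundantly, and the conclusion would follow from $\Ext^{\ell}_R(M,N)=0$ alone --- a strong internal sign that the shortcut is not available. So the step you describe as ``a faithful transcription of the \cite{DKT} argument'' is the one that is not faithful: the auxiliary module $X$ is the heart of the proof and cannot be skipped.

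A secondary point: for the final step, the clean tool is Roberts' theorem on Bass numbers ($\Ext^i_R(k,N)\neq 0$ for $\depth{N} \le i \le \id_R N$), which yields $\id_R N<\infty$ from a single vanishing $\Ext^i_R(k,N)=0$ with $i\ge \depth{N}$. Your Matlis-duality argument only covers the case where $N$ has finite length, and a Burch ring of depth zero need not be Artinian, so as written this step does not close in the stated generality either.
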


\begin{proof}
For a finitely generated $R$-module and integer $n\ge 0$, we denote by $\Omega^n M$ the $n$-th syzygy module in a minimal free resolution of $M$. Since all modules are assumed finitely generated, we may assume that $R$ is complete. If $M$ is $R$-free, then we have our result, so we assume $M$ is nonfree. Since $\depth{R} = 0$, $M$ then has infinite projective dimension. By \cite[Lemma 7.4]{DKT}, we get a short exact sequence
\[\xymatrix{
0 \ar[r] & \left(\Omega M\right)^{\oplus n} \ar[r] & X \ar[r] & M^{\oplus n} \ar[r] & 0
}\]
where $n$ is the minimal number of generators of $\m$ and the entries of the minimal presentation matrix of $X$ generates $ \m$. Since $R$ is Burch, \cite[Proposition 4.2]{DKT} tells us that $k$ is a direct summand of $\Omega^2 X$. One can then construct a (not necessarily minimal) free resolution of $X$ over $R$ from the minimal resolutions of $\Omega M$ and $M$ over $R$ to then induce the exact sequence
\[\xymatrix{
0 \ar[r] & \left(\Omega^3 M \right)^{\oplus n} \ar[r] & \Omega^2 X \oplus F \ar[r] & \left(\Omega^2 M\right)^{\oplus n} \ar[r] & 0
}\]
where $F$ is a free $R$-module. Using the long exact sequences in Ext, we have \[\Ext_R^{\ell - 2}\left(\left(\Omega^2 M\right)^{\oplus n},N\right) \cong \left( \Ext_R^{\ell - 2} \left( \Omega^2 M, N\right) \right)^{\oplus n} \cong \left( \Ext_R^{\ell} \left(M, N \right) \right)^{\oplus n} = 0. \] Similarly, we have \[\Ext_R^{\ell - 2}\left(\left(\Omega^3 M\right)^{\oplus n},N\right) \cong \left( \Ext_R^{\ell - 2} \left( \Omega^3 M, N\right) \right)^{\oplus n} \cong \left( \Ext_R^{\ell + 1} \left(M, N \right) \right)^{\oplus n} = 0. \] This then implies that
$0 = \Ext_R^{\ell - 2}\left( \Omega^2 X \oplus F, N \right)\cong \Ext_R^{\ell - 2}\left( \Omega^2 X, N \right).$

Since $k$ is a direct summand of $\Omega^2 X$, we deduce that $\Ext_R^{\ell - 2} (k, N)$ vanishes and thus see that $N$ must have finite injective dimension.
\end{proof}

The following corollary is the Ext version of \cite[Corollary 7.13]{DKT} with the proof based on that of \cite[Corollary 6.6]{NT}.

\begin{cor}\label{cor:BurchExt}
Let $(R,\m,k)$ be a Cohen-Macaulay Burch ring of depth $t$. Let $M$ and $N$ be finitely generated $R$-modules and set $s := t - \depth M$. Assume that there exists an integer $\ell \geq \max\{3,t + 1\}$ such that $\Ext_R^i(M,N) = 0$ for all $\ell + s \leq i \leq \ell + s + t + 1$. Then either $M$ has finite projective dimension or $N$ has finite injective dimension.
\end{cor}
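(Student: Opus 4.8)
The plan is to reduce the Cohen--Macaulay case of depth $t$ down to the depth-zero situation handled by Proposition \ref{prop:BurchExt}, by killing a maximal regular sequence. First I would pass to the completion $\widehat{R}$, which is harmless since all modules are finitely generated and neither the finiteness of projective dimension nor that of injective dimension is affected. The key device is to choose a regular sequence $\mathbf{x} = x_1,\dots,x_t$ in $\m$ that is simultaneously $R$-regular, $N$-regular, and an $M$-sequence up to the appropriate point; since $R$ is Cohen--Macaulay of depth $t$ and $\depth M = t - s$, a generic choice (after enlarging the residue field by a faithfully flat extension, as in the proof of Proposition \ref{mult8}) gives an $\mathbf{x}$ that is $R$- and $N$-regular of length $t$ and $M$-regular of length $t-s$. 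Set $\overline{R} = R/(\mathbf{x})$, which is a Cohen--Macaulay Burch ring of depth zero by the definition of Burch ring (killing a regular sequence preserves the Burch property), and set $\overline{N} = N/\mathbf{x}N$.

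The heart of the argument is the standard translation of Ext across a regular sequence. Because $\mathbf{x}$ is $N$-regular, one has the isomorphism $\Ext^{j}_{\overline{R}}(\overline{M}, \overline{N}) \cong \Ext^{j+t}_{R}(M, N)$ for all $j$, where $\overline{M}$ denotes a module over $\overline{R}$ obtained from $M$ by killing the part of $\mathbf{x}$ that is $M$-regular together with the remaining Koszul data; more precisely, since only $t-s$ of the $x_i$ form an $M$-sequence, I would track the shift carefully and arrive at a relation of the form
\[
\Ext^{i}_{R}(M,N) \cong \Ext^{\,i - t + s}_{\overline{R}}\bigl(M/\mathbf{x}'M,\ \overline{N}\bigr),
\]
where $\mathbf{x}'$ is the $M$-regular initial segment of $\mathbf{x}$ of length $t-s$. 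The hypothesis then converts the vanishing range $\ell + s \le i \le \ell + s + t + 1$ into vanishing of two consecutive Ext modules over $\overline{R}$ in degrees at least $3$, namely at degrees $\ell$ and $\ell+1$ with $\ell \ge 3$. This is exactly the input required by Proposition \ref{prop:BurchExt}.

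Applying Proposition \ref{prop:BurchExt} over the depth-zero Burch ring $\overline{R}$ yields that either $M/\mathbf{x}'M$ is $\overline{R}$-free or $\overline{N}$ has finite injective dimension over $\overline{R}$. Finally I would lift each alternative back to $R$: finite projective dimension of $M/\mathbf{x}'M$ over $\overline{R}$ forces finite projective dimension of $M$ over $R$, and finite injective dimension of $\overline{N}$ over $\overline{R}$ forces finite injective dimension of $N$ over $R$, both by the usual behavior of homological dimensions under reduction by a regular sequence. The main obstacle I anticipate is bookkeeping the degree shifts correctly when $\mathbf{x}$ is $M$-regular only up to length $t-s$ rather than the full length $t$; getting the precise shift $i \mapsto i - t + s$ right (and confirming that the hypothesized range of length $t+2$ collapses to exactly two consecutive nonvanishing-test degrees $\ell,\ell+1$ over $\overline{R}$) is the delicate computational point, and the rest is routine descent and lifting of homological dimensions.
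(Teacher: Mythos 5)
Your overall strategy---reduce to the depth-zero case of Proposition \ref{prop:BurchExt} by killing a regular sequence---is the right one, but the reduction as you describe it breaks down at its first step. You propose to choose $\mathbf{x}=x_1,\dots,x_t$ that is simultaneously $R$-regular and $N$-regular of length $t$. No such sequence need exist: $N$ is an arbitrary finitely generated module and may well have $\depth{N}<t$ (for instance $N=k$ when $t>0$), and no amount of genericity or residue field extension produces a length-$t$ $N$-regular sequence in that case. The same problem infects your final lifting step, where you deduce $\id_R N<\infty$ from the finiteness of the injective dimension of $N/\mathbf{x}N$ over $R/(\mathbf{x})$: that implication again needs $\mathbf{x}$ to be $N$-regular. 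A second, independent problem is the proposed isomorphism $\Ext^i_R(M,N)\cong\Ext^{i-t+s}_{\overline{R}}(M/\mathbf{x}'M,\overline{N})$: when $s>0$ the module $M/\mathbf{x}'M$ is only a module over $R/(\mathbf{x}')$, not over $\overline{R}=R/(\mathbf{x})$, so the right-hand side is not even defined, and Rees's lemma (the standard translation of Ext across a regular sequence, \cite[section 18, lemma 2(i)]{M}) requires the sequence to annihilate the first argument and be regular on the second---neither of which your setup provides.

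The paper's proof supplies exactly the two devices needed to make the reduction legitimate. First, it replaces $M$ by $L:=\Omega_R^s M$, which is maximal Cohen--Macaulay, so that the full sequence $\mathbf{x}$ is $L$-regular and the Koszul-type short exact sequences convert the length-$(t+2)$ vanishing window for $\Ext^*_R(L,-)$ into a length-$2$ window for $\Ext^*_R(L/\mathbf{x}L,-)$. Second---and this is the key missing idea---it replaces $N$ by a maximal Cohen--Macaulay approximation via the Auslander--Buchweitz exact sequence $0\to Y\to X\to N\to 0$ with $X$ maximal Cohen--Macaulay and $\id_R Y<\infty$; since $\Ext^i_R(L,Y)=0$ for $i>0$, the Ext-vanishing hypotheses transfer from $N$ to $X$, and because $X$ is maximal Cohen--Macaulay the sequence $\mathbf{x}$ is $X$-regular, so Rees's lemma applies to give $\Ext^i_{R/(\mathbf{x})}(L/\mathbf{x}L,X/\mathbf{x}X)=0$ in two consecutive degrees at least $3$. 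Proposition \ref{prop:BurchExt} then yields $\pd_R L<\infty$ or $\id_R X<\infty$, and the conclusion for $M$ and $N$ follows from the syzygy relation and the approximation sequence. Without the syzygy step and the Auslander--Buchweitz approximation, your argument cannot be completed as written.
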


\begin{proof}
As all modules are finitely generated, we may assume $R$ is complete and thus has a canonical module. From this, we apply \cite[Theorem 1.1]{AB} to get the exact sequence $0 \to Y \to X \to N \to 0$ where $X$ is maximal Cohen-Macaulay and $Y$ has finite injective dimension.

We set $L := \Omega_R^s M$ and note that $L$ is maximal Cohen-Macaulay. Moreover, we have $\Ext_R^i(L,N) \cong \Ext_R^{i + s}(M,N) = 0$ for all $\ell \leq i \leq \ell + t + 1$. We also note that the finite injective dimension of $Y$ with the maximal Cohen-Macaulay property of $L$ yields $\Ext_R^i(L,Y)=0$ for all $i > 0$. Combining the Ext-vanishing with the short exact sequence above, we obtain $\Ext_R^i(L,X) = 0$ for $\ell \leq i \leq \ell + t + 1$.

Now, since $R$ is Burch of depth $t$, there exists an $R$-regular sequence $\mathbf{x} = x_1, \ldots, x_t$ such that $R / ( \mathbf{x} ) \cong S/I$ where $S$ is a regular local ring and $I$ is Burch. Since $L$ is maximal Cohen-Macaulay, we also have $\mathbf{x}$ is a regular sequence on $L$ and thus the long exact sequences in Ext derived from the exact sequences
\[\xymatrix{
0 \ar[r] & \frac{L}{(x_1,\ldots,x_{j-1})L} \ar[r]^-{\cdot x_j} & \frac{L}{(x_1,\ldots,x_{j-1})L} \ar[r] & \frac{L}{(x_1,\ldots,x_{j-1},x_j)L} \ar[r] & 0
}\]
for $1 \leq j \leq t$ yields $\Ext_R^i(L/\mathbf{x}L,X)=0$ for $\ell + t \leq i \leq \ell + t + 1$. We next use \cite[section 18, lemma 2(i)]{M} to obtain $\Ext_{R/(\mathbf{x})}^i(L/\mathbf{x}L,X/\mathbf{x}X) = 0$ for all $\ell \leq i \leq \ell + 1$. Applying Proposition \ref{prop:BurchExt}, we find that either $L/\mathbf{x}L$ is $R/(\mathbf{x})$-free or $X/\mathbf{x}X$ has finite injective dimension. In the latter case, since $R/(\mathbf{x})$ is a depth 0, Cohen-Macaulay ring, this means $X/\mathbf{x}X$ is $R/(\mathbf{x})$-injective. As such, \cite[lemma 1.3.5 and corollary 3.1.15]{BH} gives $\pd_R L < \infty$ or $\id_R X < \infty$.

In the case that $\pd_R L < \infty$, we immediately have $\pd_R M < \infty$. 
In the case that $\id_R X < \infty$, in view of the exact sequence $0 \to Y \to X \to N \to 0$, we obtain $\id_R N < \infty$.

\end{proof}

The following is a consequence of Corollary \ref{cor:BurchExt}. 

\begin{cor} \label{burchsemi}
If $I$ is an $\m$-primary Burch ideal of a local ring $R$, then $R/I$ has only trivial semidualizing modules.
\end{cor}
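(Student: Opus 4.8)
The plan is to apply Corollary \ref{cor:BurchExt} to a semidualizing module paired with itself, and then invoke the homological characterizations of the trivial semidualizing modules recorded just after the definition of semidualizing modules (namely, that $C\cong R/I$ iff $C$ is semidualizing with finite projective dimension, and $C\cong K$ iff $C$ is semidualizing with finite injective dimension).

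First I would record the structure of the ring. Since $I$ is $\m$-primary, the quotient $R/I$ is Artinian, hence a zero-dimensional Cohen--Macaulay local ring of depth $t=0$. Moreover, because $I$ is a Burch ideal of $R$, Remark \ref{burchideal} guarantees that $R/I$ is a Burch ring. Thus $R/I$ satisfies the standing hypotheses of Corollary \ref{cor:BurchExt}.

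Next, let $C$ be a semidualizing $R/I$-module, so that $\Ext_{R/I}^i(C,C)=0$ for every $i>0$. I would apply Corollary \ref{cor:BurchExt} with $M=N=C$. Since $R/I$ is Artinian, every module has depth $0$, so $\depth C = 0$ and hence $s = t - \depth C = 0$. Choosing $\ell = 3 = \max\{3,t+1\}$, the required vanishing window $\ell + s \le i \le \ell + s + t + 1$ becomes $3 \le i \le 4$, which holds because $C$ is semidualizing. Corollary \ref{cor:BurchExt} then yields that either $C$ has finite projective dimension or $C$ has finite injective dimension. In the former case $C\cong R/I$ and in the latter $C\cong K$, so $C$ is trivial in either case, proving the corollary.

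The computation is routine; the only point requiring care is the bookkeeping of the depth and shift parameters, namely checking that $t=0$ forces $s=0$ and places the semidualizing Ext-vanishing exactly inside the window demanded by Corollary \ref{cor:BurchExt}, so that the corollary applies with the minimal admissible choice $\ell = 3$.
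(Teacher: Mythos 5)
Your proposal is correct and is exactly the argument the paper intends: the paper states Corollary \ref{burchsemi} as an immediate consequence of Corollary \ref{cor:BurchExt} without writing out the details, and you have supplied precisely those details (passing to the Artinian Burch ring $R/I$ via Remark \ref{burchideal}, taking $M=N=C$ with $t=s=0$ and $\ell=3$, and concluding via the finite projective/injective dimension characterizations of the trivial semidualizing modules). The parameter bookkeeping is right, so nothing further is needed.
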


\subsection{Fractional ideals}

Let $A$ be a noetherian ring and let $Q(A)$ be its total ring of fractions.
An $A$-submodule $I$ of $Q(A)$ is said to be a \emph{fractional ideal} if there exists a regular element $a\in R$ such that $aI \subseteq R$.
If $A$ is an integral domain, then every nonzero ideal of $A$ is a fractional ideal.

\begin{remark} \label{fraciso}
For fractional ideals $I$ and $J$, we denote by $I:J$ the colon fractional ideal $I:_{Q(A)}J=\{a\in Q(A)\mid aJ\subseteq I\}$.
Then there is a canonical isomorphism $\Hom_A(J,I) \cong I \colon J$ of $A$-modules; see \cite[Lemma 2.4.2]{HS}.
Using this isomorphism, one can see that the image of the canonical map
\[
J \otimes_A \Hom_A(J,I) \to I
\]
which sends $x\otimes f$ to $f(x)$ for $x\in J$ and $f\in \Hom_A(J,I)$ coincides to $J(I:J)$.
\end{remark}

The next proposition plays a key role in our computation (Example \ref{ex7.3} and Proposition \ref{p45}).

\begin{prop}[{\cite[Proposition 6.3]{GTTT}}] \label{prop6.3} Let $A$ be a Cohen-Macaulay local ring with $\dim A=1$. Assume $I$ and $J$ are both fractional ideals of $A$ where $\mu_A(I)=2$. Then $(J:I)/(A:I)J \cong T(I\otimes_A J)$ as $A$-modules where $T(I\otimes_A J)$ denotes the torsion part of the tensor product $I\otimes_A J$.
\end{prop}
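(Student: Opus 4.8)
The plan is to realize both modules in the statement as the (co)kernel of a natural map and then compare them using the hypothesis $\mu_A(I)=2$. Throughout, write $Q=Q(A)$ and recall from Remark~\ref{fraciso} the identifications $\Hom_A(I,A)\cong A:I$ and $\Hom_A(I,J)\cong J:I$. Under these, the natural evaluation map $\nu\colon J\otimes_A\Hom_A(I,A)\to\Hom_A(I,J)$, $y\otimes f\mapsto(x\mapsto f(x)y)$, becomes multiplication $J\otimes_A(A:I)\to(J:I)$, $y\otimes g\mapsto yg$, whose image is $(A:I)J$ by the computation in Remark~\ref{fraciso}; hence $\operatorname{coker}\nu=(J:I)/(A:I)J$. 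On the other side stands the multiplication map $m\colon I\otimes_A J\to IJ\subseteq Q$, $x\otimes y\mapsto xy$, which is surjective.

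First I would identify the torsion submodule. Since $A$ is Cohen--Macaulay of dimension one, $Q$ is Artinian and the regular elements of $A$ are exactly its nonzerodivisors, so $T(M)=\ker(M\to M\otimes_A Q)$ for every $M$. As $I$ and $J$ are regular fractional ideals we have $I\otimes_A Q\cong J\otimes_A Q\cong Q$, so $m$ becomes an isomorphism after $-\otimes_A Q$, while $IJ\subseteq Q$ is torsion free. Comparing, $m$ carries $T(I\otimes_A J)$ into $T(IJ)=0$, and $\ker m$ localizes to $0$ and is therefore torsion; hence $T(I\otimes_A J)=\ker m$.

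Next I would bring in $\mu_A(I)=2$. Writing $I=(a_1,a_2)$, fix the presentation $0\to\Omega\to A^2\xrightarrow{\pi}I\to0$ with $\pi(e_i)=a_i$. Right exactness of $-\otimes_A J$ gives $\ker(\pi\otimes 1)=\operatorname{im}(\Omega\otimes_A J\to J^2)$, and since $m\circ(\pi\otimes 1)=\psi$ with $\psi\colon J^2\to IJ$, $\psi(y_1,y_2)=a_1y_1+a_2y_2$, a short diagram chase yields
\[
T(I\otimes_A J)=\ker m\cong\ker\psi\big/\operatorname{im}(\Omega\otimes_A J\to J^2).
\]
The determinant identity shows that $c\mapsto(ca_2,-ca_1)$ sends $J:I$ into $\ker\psi$ and $A:I$ into $\Omega$; this map is injective because $I$ contains a regular element, and it carries $(A:I)J$ into $\operatorname{im}(\Omega\otimes_A J\to J^2)$, thereby inducing a natural homomorphism $\bar\Phi\colon(J:I)/(A:I)J\to T(I\otimes_A J)$, the candidate isomorphism. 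When $A$ is a domain (the case needed for numerical semigroup rings) every syzygy of $a_1,a_2$ is a Koszul syzygy, so $\Omega=(A:I)\cdot(a_2,-a_1)\cong A:I$ and $\ker\psi=(J:I)\cdot(a_2,-a_1)\cong J:I$; under these identifications $\operatorname{im}(\Omega\otimes_A J\to J^2)$ corresponds exactly to $(A:I)J$, and $\bar\Phi$ is visibly an isomorphism.

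The main obstacle is the general Cohen--Macaulay case, where $\Omega$ and $\ker\psi$ may contain extra, non-Koszul syzygies arising from zerodivisors, so the surjectivity and injectivity of $\bar\Phi$ are no longer automatic. To handle this I would pass to the Auslander--Bridger transpose $\operatorname{Tr}I$: the two standard four-term exact sequences attached to $\operatorname{Tr}I$ give $T(I\otimes_A J)\cong\Ext_A^1(\operatorname{Tr}I,J)$ and $(J:I)/(A:I)J=\operatorname{coker}\nu\cong\operatorname{Tor}_1^A(\operatorname{Tr}I,J)$. The hypothesis $\mu_A(I)=2$ forces $\operatorname{Tr}I$ to admit a presentation with exactly two relations, so its first syzygy is a rank-one torsion-free module; using that $A:I$ contains a regular element, one checks that for such a transpose the relevant $\operatorname{Tor}_1^A$ and $\Ext_A^1$ coincide, which upgrades $\bar\Phi$ to an isomorphism in full generality.
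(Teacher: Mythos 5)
The paper offers no proof of this proposition to compare against: it is quoted from \cite[Proposition 6.3]{GTTT}. Judged on its own terms, your argument follows the natural (and, in substance, the source's) route: identify $T(I\otimes_A J)$ with $\ker\bigl(m\colon I\otimes_A J\to IJ\bigr)$, present $I$ by its two generators, and compare $\ker\psi/\operatorname{im}(\Omega\otimes_AJ\to J^2)$ with $(J:I)/(A:I)J$ via the Koszul syzygy. Everything through your third paragraph checks out, and the domain case you complete there covers every application made in this paper.

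The gap is the last paragraph. The assertion that ``one checks that for such a transpose the relevant $\operatorname{Tor}_1^A$ and $\Ext^1_A$ coincide'' is not an argument: by the two Auslander four-term exact sequences you invoke, $\operatorname{Tor}_1^A(\operatorname{Tr}I,J)\cong (J:I)/(A:I)J$ and $\Ext^1_A(\operatorname{Tr}I,J)\cong T(I\otimes_AJ)$, so the claimed coincidence \emph{is} the proposition restated; these two functors do not agree for general modules, and the remark that $\operatorname{Tr}I$ has two relations supplies no mechanism forcing them to agree here. The good news is that the detour is unnecessary, because the obstacle you are routing around is illusory: the computation $\Omega=(A:I)\cdot(-a_2,a_1)$ never uses that $A$ is a domain, only that one of the two generators, say $a_1$, is a unit of $Q(A)$. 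Indeed, if $xa_1+ya_2=0$ then $q:=y/a_1\in Q(A)$ satisfies $(x,y)=q\,(-a_2,a_1)$, and $(x,y)\in A^2$ forces $q\in A:I$; the same computation gives $\ker\psi=(J:I)\cdot(-a_2,a_1)$, after which your identifications go through verbatim. Such a generating set always exists: since $I$ contains a unit of $Q(A)$, prime avoidance over the finitely many associated primes of $A$ produces a minimal generator of $I$ lying in none of them, at least after the harmless faithfully flat base change $A\to A[u]_{\m A[u]}$ making the residue field infinite (formation of tensor products, colons, torsion and $\mu_A$ commutes with this extension, and isomorphism descends along it). With that reduction your third paragraph already proves the statement for every one-dimensional Cohen--Macaulay local ring, and the transpose paragraph can simply be deleted.
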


We give one more remark about behaviors of semidualizing modules over local integral domains.

\begin{remark} \label{r214}
Assume $A$ is a local integral domain.
Let $C$ be a semidualizing $A$-module.
Then $C$ is isomorphic to an ideal $I$ of $A$ (\cite[Proposition 3.1]{SW07}).
Assume further that $A$ is Cohen--Macaulay with a canonical module $K$.
Then $K$ can be chosen to be a fractional ideal of $A$.
By Lemma \ref{dual} and Remark \ref{fraciso}, it follows that $I(K:I)=K$.
\end{remark}

\begin{prop}[{\cite[Proposition 3.1]{BV}}] \label{p219}
Let $A$ be a Cohen--Macaulay local ring with $\dim A=1$.
Assume $I$ is a fractional ideal and $K$ is a canonical ideal.
Then the canonical map $A \to \Hom_A(I,I)$ is an isomorphism if and only if $I(K:I)=K$.
\end{prop}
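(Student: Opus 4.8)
The plan is to translate everything into the language of colon fractional ideals inside $Q(A)$ and then exploit the duality properties of the canonical ideal $K$. First I would use Remark~\ref{fraciso} to identify $\Hom_A(I,I)\cong I:I$, under which the homothety map $A\to\Hom_A(I,I)$ becomes the inclusion $A\hookrightarrow I:I$; this inclusion always holds since $aI\subseteq I$ for every $a\in A$. Consequently the homothety map is an isomorphism if and only if $I:I=A$, so the whole statement reduces to proving the equivalence $I:I=A \iff I(K:I)=K$.

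Next I would record one elementary fact about colons together with the standard duality properties of $K$. For fractional ideals, unwinding the definitions gives the associativity-type identity $(K:I):J=\{x\mid xIJ\subseteq K\}=K:(IJ)$. On the canonical side, since $A$ is one-dimensional Cohen--Macaulay, every fractional ideal is torsion-free and hence maximal Cohen--Macaulay, so the canonical duality $\Hom_A(-,K)\cong K:(-)$ is involutive on such modules; this yields the biduality $K:(K:I)=I$. Moreover $K:K=A$, because the homothety map of the canonical module is an isomorphism, and $K:A=K$ trivially.

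The heart of the argument is the identity $I:I=K:\bigl(I(K:I)\bigr)$. To obtain it I would set $J=K:I$, so that $K:J=K:(K:I)=I$ by biduality, and then compute $K:(IJ)=(K:J):I=I:I$ using the colon identity. From this identity both implications follow cleanly: if $I(K:I)=K$ then $I:I=K:K=A$; conversely if $I:I=A$ then $K:\bigl(I(K:I)\bigr)=A$, and applying $K:(-)$ to both sides together with biduality applied to the fractional ideal $I(K:I)$ yields $I(K:I)=K:A=K$.

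The main obstacle I anticipate is not the formal manipulation but justifying the canonical-module inputs in the required generality: that $I:I$ and $I(K:I)$ are genuine fractional ideals, so that the colon calculus and the reflexivity $K:(K:(-))=\mathrm{id}$ legitimately apply to them, and that $K:K=A$. These rest on the fact that in a one-dimensional Cohen--Macaulay ring every fractional ideal is maximal Cohen--Macaulay, so that $\Hom_A(-,K)$ is an exact, involutive duality on this class; the care needed is to verify that the relevant products and colons remain within this class before invoking biduality.
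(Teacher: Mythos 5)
Your argument is correct. Note first that the paper does not prove this proposition at all: it is quoted verbatim from Brennan--Vasconcelos \cite[Proposition 3.1]{BV}, so there is no in-paper proof to compare against. Your proof is essentially the standard one and is complete: the reduction of the homothety condition to $I:I=A$ via Remark~\ref{fraciso}, the colon identity $K:(IJ)=(K:J):I$, and the canonical biduality $K:(K:I)=I$ together give the key identity $I:I=K:\bigl(I(K:I)\bigr)$, from which both implications follow by applying $K:K=A$ in one direction and $K:(-)$ plus biduality in the other. The only points requiring care are exactly the ones you flag: that $K:I$ and $I(K:I)$ are again fractional ideals containing a regular element (hence maximal Cohen--Macaulay in dimension one, so that $\Hom_A(-,K)$ is a genuine involutive duality on them, and so that the abstract biduality isomorphism is realized by the inclusion $I\subseteq K:(K:I)$), and these are routine to verify: if $aI\subseteq A$ with $a$ regular then $aK\subseteq K:I$, so $K:I$ inherits a regular element from $K$, and products of fractional ideals are fractional ideals. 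So the proposal stands as a legitimate self-contained proof of the cited result.
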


\subsection{Numerical Semigroup Rings}

Let $\NN$ denote the set of non-negative integers. A \emph{numerical semigroup} $H$ is a subset of $\NN$ containing 0, closed under addition and satisfying $\gcd(H)=1$, where $\gcd$ denotes the greatest common divisor. Note that $\gcd(H)=1$ is equivalent to saying that $\NN \setminus H$ is finite.

The \emph{Ap\'ery set} of $H$ with respect to $c\in H$, denoted $\mathrm{Ap}_{c}(H)$, is the set of smallest elements in $H$ in each congruence class modulo $c$.
That is, $\mathrm{Ap}_{c}(H)=\{h_0, h_1, \ldots, h_{c-1}\}$ where $h_0=0$ and $h_i=\min \{ h\in H \mid h \equiv i \mod c \}$.

Let $0<a_1<a_2<\cdots< a_{\ell}$ be integers such that $\gcd(a_1, a_2, \ldots, a_{\ell})=1$ and consider the numerical semigroup $H=\langle a_1,a_2,\dots,a_{\ell}\rangle$ generated by $a_1,a_2,\dots,a_{\ell}$.
Let $k[\![t]\!]$ be a formal power series ring over a field $k$, and let $R_H=k[\![H]\!]=k[\![t^{a_1}, t^{a_2}, \ldots, t^{a_{\ell}}]\!]$ be the subring of $k[\![t]\!]$. Then $R_H$ is called a \emph{numerical semigroup ring} over $k$.

Note that $R_H$ is a one-dimensional Cohen-Macaulay complete local integral domain with maximal ideal $\m=(t^{a_1}, t^{a_2}, \ldots, t^{a_{\ell}})$, a canonical module $K$, and multiplicity $e(R_H)=a_1$.
We also denote by $A_H$ the factor ring $R_H/t^{a_1}R_H$.

Our goal is to answer Question~\ref{Question2} for the rest of the paper. First we observe that the numerical semigroup ring $R=k[\![ t^9, t^{10}, t^{11}, t^{12}, t^{15} ]\!]$ given in \cite[Example 7.3]{GTTT} has a nontrivial semidualizing module.

\begin{example} \label{ex7.3} Let $H=\langle 9, 10, 11, 12, 15\rangle $. Then $R:=R_H=k[\![ t^9, t^{10}, t^{11}, t^{12}, t^{15} ]\!]$. Here we have $K=(1, t, t^3, t^4)$.  Let $I=(1,t)$. Then $I^\vee=K:I=(1, t^3)$. Note that $\mu_R(I)=\mu_R(I^\vee)=2$, and $\mu_R(K)=4$. One can check that $R:I=(t^9, t^{10}, t^{11})$ and $I^\vee:I=(t^9, t^{10}, t^{11}, t^{12}, t^{13}, t^{14})$. Hence we have $(R:I)I^\vee=I^\vee:I$ which implies $I\otimes_R I^{\vee}$ is torsion-free by Proposition~\ref{prop6.3}, and we get $\Ext_R^1(I, I)=0$ by \cite[Lemma 4.6]{HW}. Now we show $\Ext^2_R(I, I)=0$. Since $I^{\vee}=(1, t^3)$,  one can apply Proposition~\ref{prop6.3} one more time to get $$T((R:I)\otimes_R I^{\vee})=((R:I):I^{\vee})/(R:I^{\vee})(R:I).$$
Furthermore we have  $(R:I):I^{\vee}=(t^{18}, t^{19}, t^{20}, t^{21}, t^{22}, t^{23}, t^{24}, t^{25}, t^{26})$ and $R:I^{\vee}=(t^9, t^{12}, t^{15} )$. Therefore $(R:I):I^{\vee}=(R:I^{\vee})(R:I)$ which implies that $(R:I)\otimes_R I^{\vee}$ is torsion-free.
As $\mu_R(I)=2$, applying \cite[Lemma 3.3]{HH}, we get a short exact sequence
\[
0 \to R:I \to R^{\oplus 2} \to I \to 0.
\]
This yields that $\Ext^2_R(I, I) \cong \Ext^1_R(R:I, I)$.
Now we get $\Ext^2_R(I, I)=0$ by \cite[Lemma 4.6]{HW}.
Next note that we have the short exact sequence 
\begin{equation} 0\xrightarrow{} I \xrightarrow{t^9} R:I \xrightarrow{} R/ ((t^9, t^{10}):_R t^{11}) \xrightarrow{} 0 
\end{equation}
where $(t^9, t^{10}):_R t^{11}=R:I$. Thus we get the long exact sequence 
\begin{equation}\cdots \xleftarrow{} \Ext_R^i(I, I)\xleftarrow{} \Ext_R^i(R:I, I) \xleftarrow{} \Ext^i_R(R/(R:I), I) \xleftarrow{} \cdots 
\end{equation}

Since $\Ext_R^i(R:I, I) \cong \Ext_R^{i+1} (I,I)$ and $\Ext^i_R(R/(R:I), I) \cong \Ext^{i-1}_R(R:I, I) \cong \Ext^i_R(I, I)$ for all $i\geq 2$, we have $\Ext^i (I, I) = 0$ for all $i\ge 1$.

Finally we have $\Hom_R(I,I)=R$ by Proposition \ref{p219}.
Therefore the numerical semigroup ring $R=k[\![ t^9, t^{10}, t^{11}, t^{12}, t^{15} ]\!]$ has a nontrivial semidualizing module and $e(R)=9$. 
\end{example} 

\section{Kunz's polyhedron} \label{section3}

We use basic terminology and facts from convex geometry, which we summarize here.
For more detailed treatments, we refer the reader to \cite{BG}. 

An \emph{affine half-space} of $\mathbb{R}^n$ is a subset $\{x\in \mathbb{R}^n \mid \lambda(x)\ge \alpha\}$ for some $\lambda \in \Hom_{\mathbb{R}}(\mathbb{R}^n,\mathbb{R})$ and $\alpha\in \mathbb{R}^n$.
The half-space is \emph{linear} if $\alpha=0$.
A \emph{polyhedron} is the intersection of finitely many affine half-spaces, whereas a \emph{cone} is the intersection of finitely many linear half-spaces.
A \emph{support hyperplane} of a polyhedron $P$ is a hyperplane $H$ such that $P$ is contained in one of two closed half-spaces into which $\mathbb{R}^n$ is decomposed by $H$.
A \emph{face} of $P$ is a subset $F=P\cap H$ where $H$ is a support hyperplane of $P$.
The \emph{dimension} of $F$ is the dimension of its affine hull.
A \emph{facet} is a face $F$ with $\dim F=\dim P-1$.
A \emph{vertex} is a face $F$ with $\dim F=0$.
We denote by $F^{\circ}$ the interior of $F$.

Next we recall the fundamental concept of the polyhedron which was introduced by Kunz; see \cite{K} for more details.

Fix an integer $m \ge 3$.
Let $\mathfrak{H}_m$ be the set of all numerical semigroups containing $m$.
For fixed $H\in \mathfrak{H}_m$ and each $i=1,\dots,m-1$, let $h_i$ be the smallest element of $H$ such that $h_i \equiv i \mod m$, so that $\{0,h_1,\dots,h_{m-1}\}$ forms the Ap\'ery set $\mathrm{Ap}_m(H)$ of $H$ with respect to $m$.
We also write $h_i=i+\mu_im$, where $\mu_i$ is a non-negative integer.
Since $H$ is a disjoint union of $m\mathbb{N},h_1+m\mathbb{N},\dots, h_{m-1}+m\mathbb{N}$, $H$ is uniquely determined by
$(\mu_1,\dots,\mu_{m-1}) \in \mathbb{N}^{m-1}$.
The point $(\mu_1,\dots,\mu_{m-1})$ is a solution of the system of linear inequalities
\begin{equation} \label{ieq}
\left\{\begin{array}{ll}
X_i+X_j \ge X_{i+j} & (1 \le i < j \le m-1, i+j<m),\\
X_i+X_j \ge X_{i+j-m}-1 & (1 \le i < j \le m-1, i+j>m).
\end{array}\right.
\end{equation}

Similarly, $(h_1,\dots,h_{m-1})$ is a solution of the system of linear inequalities
\begin{equation} \label{ieqq}
\left\{\begin{array}{ll}
X_i+X_j \ge X_{i+j} & (1 \le i < j \le m-1, i+j<m),\\
X_i+X_j \ge X_{i+j-m} & (1 \le i < j \le m-1, i+j>m).
\end{array}\right.
\end{equation}

Let $P_m$ (resp. $C_m$) be the solution set of \eqref{ieq} (resp. \eqref{ieqq}) in $\mathbb{R}^{m-1}$.
Then $P_m$ is a polyhedron with unique vertex $v:=(-\frac{1}{m},-\frac{2}{m},\dots,-\frac{m-1}{m})$, and the translation $x \mapsto x-v$ maps $P_m$ bijectively onto the polyhedral cone $C_m$.
The correspondence $H \mapsto (\mu_1,\dots,\mu_{m-1})$ gives a bijection
\[
\mu \colon \mathfrak{H}_m \to P_m \cap \mathbb{N}^{m-1}.
\]

For a pair $(i,j)$ of integers such that $1 \le i \le j \le m-1$ with $i+j\not=m$, let $E_{ij}$ be the hyperplane

\begin{equation} \label{eq}
E_{ij}:=\left\{\begin{array}{ll}
X_i+X_j = X_{i+j} & (\text{if }i+j<m),\\
X_i+X_j = X_{i+j-m}-1 & (\text{if }i+j>m).
\end{array}\right.
\end{equation}

These hyperplanes correspond bijectively to facets of $P_m$.
Consequently, a face $F$ of $P_m$ is uniquely determined by the collection $\Delta_F :=\{(i,j) \mid F \subseteq E_{ij}\}$.

The $k$-vector space $R_H/t^mR_H$ is spanned by images $\bar{t}^{h_i}$ of $t^{h_i}$ and $1$.
The multiplication on the $k$-algebra $R_H/t^mR_H$ are determined by the formulas: 
\begin{equation} \label{mult}
\bar{t}^{h_i}\bar{t}^{h_j}=\bar{t}^{h_{i+j}} \text{ if and only if }\mu(H) \in E_{ij} \text{, and } \text{if }\bar{t}^{h_i}\bar{t}^{h_j}\not=\bar{t}^{h_{i+j}} \text{, then }\bar{t}^{h_i}\bar{t}^{h_j}=0.
\end{equation}
This leads the following observation:
\begin{prop}[{\cite[Beispiele 2.4a]{K}}] 
Let $F$ be a face of $P_m$, and let $H, H' \in \mathfrak{H}_m$.
If $\mu(H)$ and $\mu(H')$ belong to the interior $F^{\circ}$ of $F$, then $R_H/t^mR_H$ and $R_{H'}/t^mR_{H'}$ are isomorphic as graded $k$-algebras.
\end{prop}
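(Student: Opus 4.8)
The plan is to exhibit an explicit $k$-linear bijection between $R_H/t^mR_H$ and $R_{H'}/t^mR_{H'}$ and to verify that it respects both the multiplication and the grading, using the fact that the multiplication tables of both algebras are governed by the single combinatorial datum $\Delta_F$. Recall that $A_H := R_H/t^mR_H$ has $k$-basis $\{1,\bar t^{h_1},\dots,\bar t^{h_{m-1}}\}$, where $h_1,\dots,h_{m-1}$ are the nonzero Ap\'ery elements of $H$, and that it carries a $\mathbb Z/m\mathbb Z$-grading in which $\bar t^{h_i}$ sits in degree $i$ (the residue of its exponent) and $1$ in degree $0$; the rule \eqref{mult} is homogeneous for this grading, since the product of a degree-$i$ and a degree-$j$ element is either the unique basis vector of degree $i+j \bmod m$ or zero.

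The key step is to translate the hypothesis $\mu(H),\mu(H')\in F^{\circ}$ into a statement about the structure constants. Because the hyperplanes $E_{ij}$ are exactly the facet-defining hyperplanes of $P_m$, the standard description of the relative interior of a face gives, for any point $p\in F^{\circ}$, the equivalence $p\in E_{ij}\iff F\subseteq E_{ij}\iff (i,j)\in\Delta_F$. Applying this to both $p=\mu(H)$ and $p=\mu(H')$ and feeding the result into \eqref{mult}, we find that in either algebra the product $\bar t^{h_i}\bar t^{h_j}$ equals the degree-$(i+j\bmod m)$ basis vector when $(i,j)\in\Delta_F$ and vanishes otherwise; the remaining case $i+j=m$ produces the zero product in every $A_H$, independently of $F$, because $h_i+h_j\in m+H$ forces $\bar t^{h_i+h_j}=0$. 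Thus the entire multiplication table of $A_H$ is determined by $\Delta_F$ alone.

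With this in hand I would define $\phi\colon A_H\to A_{H'}$ to be the $k$-linear map with $\phi(1)=1$ and $\phi(\bar t^{h_i})=\bar t^{h'_i}$ for $1\le i\le m-1$, where $h'_1,\dots,h'_{m-1}$ are the Ap\'ery elements of $H'$. By construction $\phi$ is a bijection that preserves the residue-class degree, so it is a graded $k$-linear isomorphism; and since both algebras share the multiplication table dictated by $\Delta_F$, a direct comparison shows that $\phi$ carries each product $\bar t^{h_i}\bar t^{h_j}$ to $\bar t^{h'_i}\bar t^{h'_j}$. Hence $\phi$ is the desired isomorphism of graded $k$-algebras.

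I expect the only genuinely delicate point to be the geometric equivalence $p\in E_{ij}\iff (i,j)\in\Delta_F$ for $p$ in the relative interior of $F$: this is precisely where the interior hypothesis, rather than mere membership in $F$, is used, and it rests on identifying the $E_{ij}$ with the facets of $P_m$ together with the fact that a point of $F^{\circ}$ lies on exactly those facet hyperplanes that contain all of $F$. Once this is established, the algebraic verification that $\phi$ is a homomorphism is a routine comparison of structure constants, made transparent by the grading.
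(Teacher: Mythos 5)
Your proposal is correct and follows essentially the same route the paper takes: the paper cites Kunz for this statement, but the identical argument (with a general automorphism $\sigma$ in place of the identity) is spelled out in the proof of Proposition \ref{kunz2.3}(1), where the map $\bar t^{h_i}\mapsto \bar t^{h'_{\sigma(i)}}$ is shown to be an algebra isomorphism via the chain of equivalences $\bar t^{h_i}\bar t^{h_j}=\bar t^{h_{i+j}} \iff \mu(H)\in E_{ij} \iff F\subseteq E_{ij} \iff \mu(H')\in E_{ij}$, using exactly the interior hypothesis you isolate as the delicate point. Your additional remarks on the $\mathbb{Z}/m\mathbb{Z}$-grading and on the automatic vanishing when $i+j=m$ are correct and only make the sketch more complete.
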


Elements of $\mathrm{Aut}(\mathbb{Z}/m\mathbb{Z})$, as permutations of variables $\{X_1,X_2,\dots,X_{m-1}\}$, give automorphisms of the set of all faces of $C_m$.
Thus, via the translation from $C_m$ to $P_m$, the group $\mathrm{Aut}(\mathbb{Z}/m\mathbb{Z})$ acts on the set of all faces of $P_m$.
Moreover, faces linked by this action have some similarities:

\begin{prop}
\label{kunz2.3}
Let $\sigma$ be an element of $\mathrm{Aut}(\mathbb{Z}/m\mathbb{Z})$ and let $F$ be a face of $P_m$.
Let $H$ and $H'$ be two semigroups such that $\mu(H)\in F^{\circ}$ and $\mu(H')\in \sigma(F)^{\circ}$.
\begin{enumerate}[ \rm (1)]
\item $R_H/t^mR_H \cong R_{H'}/t^mR_{H'}$ as $k$-algebras.
\item $R_H$ possesses a nontrivial semidualizing module if and only if so does $R_{H'}$.
\end{enumerate}
\end{prop}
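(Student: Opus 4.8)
The plan is to prove (1) first, since essentially all the content lies there, and then to deduce (2) formally from (1) together with Theorem~\ref{regular}. The guiding principle for (1) is that, by the multiplication rule \eqref{mult}, the $k$-algebra $A := R_H/t^mR_H$ depends on $H$ only through the set $\Delta_F = \{(i,j) \mid F \subseteq E_{ij}\}$ of facet-hyperplanes containing the face $F$ whose interior contains $\mu(H)$: the products $\bar t^{h_i}\bar t^{h_j}$ that are nonzero are precisely those indexed by pairs with $\mu(H)\in E_{ij}$, i.e. with $(i,j)\in\Delta_F$, and in that case $\bar t^{h_i}\bar t^{h_j} = \bar t^{h_{i+j}}$ with the index read modulo $m$. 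Thus (1) reduces to exhibiting a relabeling of the basis vectors, induced by $\sigma$, that carries the multiplication table of $A$ onto that of $A' := R_{H'}/t^mR_{H'}$; this is the natural extension of the preceding result (the case $\sigma=\mathrm{id}$).

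Concretely, I would write $\sigma$ as multiplication by a unit $u \in (\mathbb{Z}/m\mathbb{Z})^\times$ and define a $k$-linear map $\phi \colon A \to A'$ by $\phi(1) = 1$ and $\phi(\bar t^{h_i}) = \bar t^{h'_{\sigma(i)}}$ for $1 \le i \le m-1$, where the $h_i$ (resp. $h'_j$) are the Ap\'ery elements of $H$ (resp. $H'$). Since $i \mapsto \sigma(i)$ permutes $\{1,\dots,m-1\}$, the map $\phi$ is a $k$-linear isomorphism, so only multiplicativity must be checked. Because $\sigma$ is additive, $\sigma(i)+\sigma(j) \equiv \sigma(i+j) \pmod m$, so the relabeled indices behave correctly under the ``$\bar t^{h_i}\bar t^{h_j} = \bar t^{h_{i+j}}$'' part of the rule (and pairs with $i+j\equiv 0$ correspond to pairs with $\sigma(i)+\sigma(j)\equiv 0$, both giving vanishing products). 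Hence multiplicativity reduces to the single equivalence
\[
\mu(H) \in E_{ij} \iff \mu(H') \in E_{\sigma(i),\sigma(j)},
\]
valid for all admissible pairs $(i,j)$, which guarantees that nonzero products are matched with nonzero products and zero with zero.

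To establish this equivalence I would unwind the hypotheses together with the definition of the action on faces. Since $\mu(H) \in F^{\circ}$, membership $\mu(H) \in E_{ij}$ is equivalent to $F \subseteq E_{ij}$, i.e. to $(i,j) \in \Delta_F$; likewise $\mu(H') \in E_{\sigma(i),\sigma(j)}$ is equivalent to $(\sigma(i),\sigma(j)) \in \Delta_{\sigma(F)}$. The coordinate permutation attached to $\sigma$ sends $E_{ij}$ to $E_{\sigma(i),\sigma(j)}$ (this is exactly the homogeneous computation on $C_m$ transported to $P_m$ by the vertex translation), so $F \subseteq E_{ij}$ holds if and only if $\sigma(F) \subseteq E_{\sigma(i),\sigma(j)}$; equivalently $\Delta_{\sigma(F)} = \{(\sigma(i),\sigma(j)) : (i,j) \in \Delta_F\}$. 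Combining these three equivalences yields the displayed biconditional and completes (1). For (2), I note that $R_H$ and $R_{H'}$ are complete, one-dimensional, Cohen--Macaulay local domains and that $t^m$ is a nonzerodivisor in each, since $m\in H\cap H'$. Applying Theorem~\ref{regular} with the regular element $t^m$ shows that $R_H$ has only trivial semidualizing modules if and only if $A$ does, and likewise $R_{H'}$ if and only if $A'$ does. By part (1), $A\cong A'$ as rings, so they share this property; chaining the equivalences and passing to contrapositives gives (2).

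I expect the main obstacle to be the bookkeeping in (1): correctly tracking how the $\sigma$-action on faces interacts with the index arithmetic modulo $m$, and verifying that the relabeling $\phi$ preserves both the nonzero products and the many vanishing products dictated by \eqref{mult}. The additivity of $\sigma$ and the face-containment characterization of $E_{ij}$-membership for points of $F^{\circ}$ are precisely what make this verification go through cleanly.
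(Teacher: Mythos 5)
Your proposal is correct and follows essentially the same route as the paper's proof: the same relabeling map $\bar t^{h_i}\mapsto \bar t^{h'_{\sigma(i)}}$, the same chain of equivalences through $F\subseteq E_{ij}$, $\sigma(E_{ij})=E_{\sigma(i)\sigma(j)}$, and the additivity of $\sigma$, and the same appeal to Theorem~\ref{regular} for part (2). Your explicit handling of the pairs with $i+j\equiv 0\pmod m$ is a minor point the paper leaves implicit, but otherwise the arguments coincide.
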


\begin{proof}
(1): It follows from \cite[Proposition 2.3]{K}. 
For the reader's convenience, we give a sketch of the proof.
Let $\{0,h_1,\dots,h_{m-1}\}$ and $\{0,h'_1,\dots,h'_{m-1}\}$ be the Ap\'ery sets of $H$ and $H'$ respectively.
Define a $k$-linear map $\varphi\colon R_H/t^mR_H \to R_{H'}/t^mR_{H'}$ which sending $1$ to $1$ and $\bar{t}^{h_i}$ to $\bar{t}^{h'_{\sigma(i)}}$ for each $i$.
Then we see that $\varphi$ is a $k$-algebra homomorphism by \eqref{mult} and the following equivalences:
\begin{align*}
\bar{t}^{h_i}\bar{t}^{h_j}=\bar{t}^{h_{i+j}} & \iff \mu(H)\in E_{ij} & (\text{by \eqref{mult}})\\
& \iff F \subseteq E_{ij} & (\text{by the assumption } \mu(H)\in F^{\circ})\\
& \iff \sigma(F)\subseteq \sigma(E_{ij}) & (\text{since $\sigma$ preserves inclusions of faces})\\
& \iff \sigma(F)\subseteq E_{\sigma(i)\sigma(j)} & (\text{since $\sigma(E_{ij})=E_{\sigma(i)\sigma(j)}$})\\
& \iff \mu(H') \in E_{\sigma(i)\sigma(j)}
& (\text{by the assumption } \mu(H')\in \sigma (F)^{\circ})\\
& \iff \bar{t}^{h'_{\sigma(i)}}\bar{t}^{h'_{\sigma(j)}}=\bar{t}^{h'_{\sigma(i)+\sigma(j)}} & (\text{by \eqref{mult}})\\
& \iff\bar{t}^{h'_{\sigma(i)}}\bar{t}^{h'_{\sigma(j)}}=\bar{t}^{h'_{\sigma(i+j)}} & (\text{since }\sigma(i)+\sigma(j)=\sigma(i+j))
\end{align*}
Obviously, $\varphi$ is a bijection.
Thus $R_H/t^mR_H$ and $R_{H'}/t^mR_{H'}$ are isomorphic as $k$-algebras.

(2): This is a consequence of (1) by using Theorem \ref{regular}.
\end{proof}

\section{Classification of faces in multiplicity $9$}\label{section4}

In this section, our objective is to determine all faces $F$ of $P_9$ for which every semigroup rings $R_H$ with $\mu(H)\in F^{\circ}$ possess a nontrivial semidualizing module.
To see $F$ has such a property, it is enough to check that there exists $H\in\mathfrak{H}_9$ such that $\mu(H)\in F^{\circ}$ and $A_H$ has a nontrivial semidualizing module.

Let $F$ be one such face, and $H$ be a semigroup such that $\mu(H)\in F^{\circ}$.
According to Proposition \ref{mult8}, $R_H$ needs to have multiplicity $9$, embedding dimension $5$, and type $4$.
Therefore, $A_H$ is a $9$-dimensional local $k$-algebra having embedding dimension $4$ and type $4$.
Take a quadruple $(a,b,c,d)$ of integers such that $1 \le a <b <c <d \le 8$ and $\bar{t}^{h_a},\bar{t}^{h_b},\bar{t}^{h_c},\bar{t}^{h_d}$ form a system minimal generators of the maximal ideal of $A_H$.
Note that $(a,b,c,d)$ is uniquely determined by $F$.
Let $\sigma\in\mathrm{Aut}(\mathbb{Z}/9\mathbb{Z})$.
In view of Proposition \ref{kunz2.3}(1), if $(a,b,c,d)$ is the quadruple of $F$, then up to order, $(\sigma(a),\sigma(b),\sigma(c),\sigma(d))$ is the quadruple of $\sigma(F)$.

As we observe in Proposition \ref{kunz2.3}(2), it suffices to examine a single face from each orbit under the action of $\mathrm{Aut}(\mathbb{Z}/9\mathbb{Z})$.
Therefore, we can limit our focus to one representative $(a,b,c,d)$ per each orbits under this group action.
The following table exhibits how $\mathrm{Aut}(\mathbb{Z}/9\mathbb{Z})=\{1,2,4,5,7,8\}$ acts on $(a,b,c,d)$:

\begin{longtable}[h]{|c|c|c|c|c|c|}
\caption{}
\label{table}
\\
\hline
1 & 2 & 4 & 5 & 7 & 8
\endfirsthead
\hline
$(1,2,3,4)$ & $(2,4,6,8)$ & $(3,4,7,8)$ & $(1,2,5,6)$ & $(1,3,5,7)$ & $(5,6,7,8)$\\
\hline
$(1,2,3,5)$ & $(1,2,4,6)$ & $(2,3,4,8)$ & $(1,5,6,7)$ & $(3,5,7,8)$ & $(4,6,7,8)$ \\
\hline
$(1,2,3,6)$ & $(2,3,4,6)$ & $(3,4,6,8)$ & $(1,3,5,6)$ & $(3,5,6,7)$ & $(3,6,7,8)$ \\
\hline
$(1,2,3,7)$ & $(2,4,5,6)$ & $(1,3,4,8)$ & $(1,5,6,8)$ & $(3,4,5,7)$ & $(2,6,7,8)$ \\
\hline
$(1,2,3,8)$ & $(2,4,6,7)$ & $(3,4,5,8)$ & $(1,4,5,6)$ & $(2,3,5,7)$ & $(1,6,7,8)$ \\
\hline
$(1,2,4,5)$ & $(1,2,4,8)$ & $(2,4,7,8)$ & $(1,2,5,7)$ & $(1,5,7,8)$ & $(4,5,7,8)$ \\
\hline
$(1,2,4,7)$ & $(2,4,5,8)$ & $(1,4,7,8)$ & $(1,2,5,8)$ & $(1,4,5,7)$ & $(2,5,7,8)$ \\
\hline
$(1,2,6,7)$ & $(2,3,4,5)$ & $(1,4,6,8)$ & $(1,3,5,8)$ & $(2,5,6,7)$ & $(2,3,7,8)$ \\
\hline
$(1,2,6,8)$ & $(2,3,4,7)$ & $(4,5,6,8)$ & $(1,3,4,5)$ & $(2,5,6,7)$ & $(1,3,7,8)$ \\
\hline
$(1,2,7,8)$ & $(2,4,5,7)$ & $(1,4,5,8)$ & $(1,4,5,8)$ & $(2,4,5,7)$ & $(1,2,7,8)$ \\
\hline
$(1,3,4,6)$ & $(2,3,6,8)$ & $(3,4,6,7)$ & $(2,3,5,6)$ & $(1,3,6,7)$ & $(3,5,6,8$ \\
\hline
$(1,3,4,7)$ & $(2,5,6,8)$ & $(1,3,4,7)$ & $(2,5,6,8)$ & $(1,3,4,7)$ & $(2,5,6,8)$ \\
\hline
$(1,3,6,8)$ & $(2,3,6,7)$ & $(3,4,5,6)$ & $(3,4,5,6)$ & $(2,3,6,7)$ & $(1,3,6,8)$ \\
\hline
$(1,4,6,7)$ & $(2,3,5,8)$ & $(1,4,6,7)$ & $(2,3,5,8)$ & $(1,4,6,7)$ & $(2,3,5,8)$ \\
\hline
\end{longtable}

\begin{obs}
Denote by $\mathfrak{m}_{A_H}$ the maximal ideal of $A_H$.
Assume $A_H$ has a nontrivial semidualizing module.
Then,
\begin{enumerate}[ \rm (i)]
\item Since $\bar{t}^{h_a},\bar{t}^{h_b},\bar{t}^{h_c},\bar{t}^{h_d}$ are minimal generators of $\mathfrak{m}_{A_H}$,
a pair $(i,j)$ with $i+j\in\{a,b,c,d\}$ does not satisfy $\bar{t}^{h_i}\bar{t}^{h_j}=\bar{t}^{h_{i+j}}$. 
So if $(i,j) \in \Delta_F$, then $i+j \not \in \{a,b,c,d\}$.
\item For each $r\in \{1,\dots, 8\}\setminus\{a,b,c,d\}$, $\bar{t}^{h_r}$ is not a part of minimal generators of $\mathfrak{m}_{A_H}$.
In other words, $\bar{t}^{h_r}$ belongs to $\mathfrak{m}_{A_H}^2$.
Thus $\Delta_F$ must contain a pair $(i,j)$ such that $i+j \equiv r \mod 9$.
\item Observe that $\ell_{A_H}(\mathrm{soc}(A_H))=r(R)=4$ and 
\[
\ell_{A_H}(\mathfrak{m}_{A_H}^2)=\ell_{A_H}(A_H) -\ell_{A_H}( \mathfrak{m}_{A_H}/\mathfrak{m}_{A_H}^2)-\ell_{A_H} (A_H/\mathfrak{m}_{A_H})=9-4-1=4.
\]
Since $\mathrm{soc}(A_H) \subseteq \mathfrak{m}_{A_H}^2$ (Remark \ref{socle}), $\mathrm{soc}(A_H)$ must be equal to $\mathfrak{m}_{A_H}^2$, and so $\bar{t}^{h_r}$ belongs to $\mathrm{soc}(A_H)$ for each $r\in\{1,\dots,8\}\setminus\{a,b,c,d\}$.
In particular, for any $s \in \{1,\dots,8\}$, $\bar{t}^{h_s}\bar{t}^{h_r}=0$ in $A_H$, i.e. $(r,s),(s,r)\not\in \Delta_F$.
It means that for each $(i,j) \in \Delta_F$, both $i$ and $j$ are in $\{a,b,c,d\}$.

\item Additionaly, as $\mathrm{soc}(A_H) \subseteq \mathfrak{m}_{A_H}^2$, $\bar{t}^{h_a},\bar{t}^{h_b},\bar{t}^{h_c},\bar{t}^{h_d}$ are not in $\mathrm{soc}(A_H)$.
This means that for each $s\in \{a,b,v,d\}$, there exists $(i,j)$ such that either $s=i$ or $s=j$.
\end{enumerate}
\end{obs}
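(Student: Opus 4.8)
The plan is to extract from the hypothesis only the numerical invariants of $A_H$, and then to translate every assertion about $\mathfrak{m}_{A_H}$ and $\mathrm{soc}(A_H)$ into a combinatorial statement about $\Delta_F$ via the monomial basis $\{1,\bar t^{h_1},\dots,\bar t^{h_8}\}$ and the multiplication rule \eqref{mult}. Since $t^9$ is $R_H$-regular and $A_H=R_H/t^9R_H$, Theorem~\ref{regular} transfers the nontrivial semidualizing module to $A_H$; Proposition~\ref{mult8}(2) then forces $r(R_H)=4$ and $\mathrm{edim}(R_H)=5$, so that $A_H$ is Artinian local with $\ell_{A_H}(A_H)=9$, $\dim_k(\mathfrak{m}_{A_H}/\mathfrak{m}_{A_H}^2)=4$, and $\ell_{A_H}(\mathrm{soc}(A_H))=4$. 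These three numbers, together with the inclusion $\mathrm{soc}(A_H)\subseteq\mathfrak{m}_{A_H}^2$ of Remark~\ref{socle} (applicable precisely because $A_H$ admits a nontrivial semidualizing module), are the only consequences of the semidualizing hypothesis that I would use; everything after this is bookkeeping on the $k$-algebra $A_H$.

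The structural fact I would isolate first is that $\mathfrak{m}_{A_H}^2$ is a \emph{monomial} subspace. By \eqref{mult}, each product $\bar t^{h_i}\bar t^{h_j}$ is either $0$ or the single basis monomial $\bar t^{h_{i+j}}$ (indices read modulo $9$), and it is nonzero exactly when $\mu(H)\in E_{ij}$. Because $\mu(H)$ lies in the relative interior $F^{\circ}$, one has $\mu(H)\in E_{ij}$ if and only if $F\subseteq E_{ij}$, i.e. if and only if $(i,j)\in\Delta_F$. Hence $\mathfrak{m}_{A_H}^2$ is spanned by exactly the monomials $\bar t^{h_{i+j}}$ with $(i,j)\in\Delta_F$, and a monomial $\bar t^{h_r}$ is a minimal generator of $\mathfrak{m}_{A_H}$ if and only if it does not arise this way. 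With this dictionary, (i) is immediate: the generators $\bar t^{h_a},\dots,\bar t^{h_d}$ lie outside $\mathfrak{m}_{A_H}^2$, so $(i,j)\in\Delta_F$ forces the product $\bar t^{h_{i+j}}\in\mathfrak{m}_{A_H}^2$ to differ from each of them, whence $i+j\notin\{a,b,c,d\}$. Statement (ii) is the same dictionary read backwards: for $r\notin\{a,b,c,d\}$ the monomial $\bar t^{h_r}$ is not a minimal generator, so it lies in the monomial subspace $\mathfrak{m}_{A_H}^2$ and must therefore equal some product $\bar t^{h_i}\bar t^{h_j}$, yielding a pair $(i,j)\in\Delta_F$ with $i+j\equiv r\pmod 9$.

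For (iii) I would run the length count $\ell_{A_H}(\mathfrak{m}_{A_H}^2)=9-4-1=4=\ell_{A_H}(\mathrm{soc}(A_H))$ and combine it with the containment $\mathrm{soc}(A_H)\subseteq\mathfrak{m}_{A_H}^2$ to conclude $\mathrm{soc}(A_H)=\mathfrak{m}_{A_H}^2$. Consequently each $\bar t^{h_r}$ with $r\notin\{a,b,c,d\}$ is a socle element, so $\bar t^{h_s}\bar t^{h_r}=0$ for every $s$, meaning no pair containing such an $r$ can lie in $\Delta_F$; thus both coordinates of any $(i,j)\in\Delta_F$ belong to $\{a,b,c,d\}$. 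Statement (iv) is the dual: the minimal generators $\bar t^{h_s}$ ($s\in\{a,b,c,d\}$) lie outside $\mathfrak{m}_{A_H}^2=\mathrm{soc}(A_H)$, so $\mathfrak{m}_{A_H}\cdot\bar t^{h_s}\neq 0$, and a nonzero product $\bar t^{h_i}\bar t^{h_s}$ supplies a pair of $\Delta_F$ containing $s$.

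The only load-bearing step is the reduction in the first paragraph: once the triple of invariants $(9,4,4)$ and the socle inclusion are in hand, the monomial-subspace observation makes (i)--(iv) essentially mechanical. The points requiring care are the consistent handling of index arithmetic modulo $9$ when invoking \eqref{mult} and the two cases ($i+j<9$ versus $i+j>9$) of the hyperplane \eqref{eq}, and the verification that ``$\bar t^{h_r}\in\mathfrak{m}_{A_H}^2$'' genuinely forces $\bar t^{h_r}$ to be one of the product monomials rather than a nontrivial linear combination of them---which is exactly what the monomial-subspace structure guarantees.
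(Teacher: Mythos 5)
Your proposal is correct and follows essentially the same route as the paper: reduce via Proposition~\ref{mult8} and Theorem~\ref{regular} to the invariants $\ell_{A_H}(A_H)=9$, $\mathrm{edim}(A_H)=4$, $r(A_H)=4$ together with $\mathrm{soc}(A_H)\subseteq\mathfrak{m}_{A_H}^2$ from Remark~\ref{socle}, then translate membership in $\mathfrak{m}_{A_H}^2$ and $\mathrm{soc}(A_H)$ into conditions on $\Delta_F$ via the multiplication rule \eqref{mult}. Your explicit isolation of the fact that $\mathfrak{m}_{A_H}^2$ is a monomial subspace spanned by $\{\bar t^{h_{i+j}}:(i,j)\in\Delta_F\}$ is left implicit in the paper but is exactly the mechanism the paper relies on, so the two arguments coincide.
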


In summary, $\Delta_F$ must satisfy the following rules:
\begin{enumerate}[ \rm(R1)]
\item for each $(i,j) \in \Delta_F$, $i+j$ is not equivalent to any of $a,b,c,d$ modulo $9$.
\item for each $(i,j) \in \Delta_F$, both $i$ and $j$ are in $\{a,b,c,d\}$.
\item for each $r\in \{1,\dots, 8\}\setminus\{a,b,c,d\}$, there exists $(i,j)\in \Delta_F$ such that $i+j \equiv r \mod 9$.
\item for each $s\in \{a,b,c,d\}$, there exists $(i,j)\in \Delta_F$ such that either $s=i$ or $s=j$.
\end{enumerate}

We also give some necessary conditions on $\Delta$ to coincide $\Delta_F$ for some face $F$.

\begin{lemma}\label{lemmafaces}
For a face $F$, $\Delta_F$ must satisfy the following rules:
\begin{enumerate}[ \rm(R1)]
\setcounter{enumi}{4}

\item If $(1,2), (2,6), (6,7) \in \Delta_F$, then $(1,7), (2,2), (6,6) \in \Delta_F$.

\item If $(1,7), (2,2), (6,6) \in \Delta_F$, then $(1,2), (2,6), (6,7) \in \Delta_F$.

\item $(1,4),(1,7),(4,4),(7,7)$ cannot simultaneously belong to $\Delta_F$.

\item $(1,4), (1,7), (4,7)$ cannot simultaneously belong to $\Delta_F$.

\item $(1,1), (4,4), (7,7)$ cannot simultaneously belong to $\Delta_F$.

\end{enumerate}
\end{lemma}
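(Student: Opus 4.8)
The plan is to translate membership in $\Delta_F$ into the vanishing of explicit affine functions, and then to exploit two linear identities among these functions together with the integrality of the Ap\'ery elements. For each relevant pair $(i,j)$ set
\[
r_{ij}=\begin{cases} X_i+X_j-X_{i+j} & (i+j<9),\\ X_i+X_j-X_{i+j-9}+1 & (i+j>9),\end{cases}
\]
so that $E_{ij}=\{r_{ij}=0\}$ and, by \eqref{ieq} (and their diagonal analogues, which are also facet hyperplanes), $r_{ij}\ge 0$ on all of $P_9$. Thus $(i,j)\in\Delta_F$ precisely when $r_{ij}\equiv 0$ on $F$. Writing $h_i=i+9X_i$, this says $h_i+h_j=h_{(i+j)\bmod 9}$ on $F$, and in general $h_i+h_j\ge h_{(i+j)\bmod 9}$. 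Here $F$ is a face whose relative interior meets the lattice, i.e. $\mu(H)\in F^{\circ}$ for some $H\in\mathfrak{H}_9$ (the only faces relevant in Section~\ref{section4}), so the $h_i$ are honest Ap\'ery integers with $h_i\equiv i \pmod 9$; in particular $h_i\ne h_j$ whenever $i\ne j$ in $\{1,\dots,8\}$.

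The engine is the following elementary remark: if multisets $S$ and $T$ of pairs satisfy $\sum_{(i,j)\in S}r_{ij}=\sum_{(k,l)\in T}r_{kl}$ as affine functions, then $S\subseteq\Delta_F$ forces $T\subseteq\Delta_F$, since the left side then vanishes on $F$ while the right side is a sum of functions each nonnegative on $F$, so every summand must vanish. I would first record the two identities
\[
r_{12}+r_{26}+r_{67}=r_{17}+r_{22}+r_{66},\qquad r_{11}+r_{44}+r_{77}=r_{14}+r_{47}+r_{17},
\]
each verified by expanding both sides (the constants $+1$ coming from the pairs with $i+j>9$ match). The first identity is symmetric in its two triples, so it delivers both (R5) and (R6) at once.

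For the impossibility rules I would combine the second identity with the integrality of the $h_i$. For (R9), it turns $(1,1),(4,4),(7,7)\in\Delta_F$ into $(1,4),(4,7),(1,7)\in\Delta_F$; then $(1,7)$ with $(4,4)$ gives $h_1+h_7=h_8=2h_4$, while $(4,7)$ with $(1,1)$ gives $h_4+h_7=h_2=2h_1$, and subtracting yields $3h_1=3h_4$, i.e. $h_1=h_4$, which is impossible. Rule (R8) is the reverse reading of the same identity: $(1,4),(1,7),(4,7)\in\Delta_F$ forces $(1,1),(4,4),(7,7)\in\Delta_F$, which is excluded by (R9). Finally (R7) needs no identity: $(1,7)$ with $(4,4)$ gives $h_1+h_7=2h_4$ and $(1,4)$ with $(7,7)$ gives $h_1+h_4=2h_7$, whence $3h_4=3h_7$ and $h_4=h_7$, again impossible.

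The only genuine work is locating the two certificates; once they are in hand the deductions are immediate. The point I would flag most carefully is that (R7)--(R9) are arithmetic rather than convex-geometric: the defining equations of the forbidden faces are consistent over $\mathbb{R}$ (they merely force differences such as $X_1-X_4=\tfrac13$), so the contradiction cannot come from $P_9$ alone but must use integrality of the Ap\'ery coordinates, i.e. the standing hypothesis that $F^{\circ}$ contains a lattice point $\mu(H)$. I would therefore make this hypothesis explicit at the outset, and note that (R5), (R6) by contrast hold for every face since they use only nonnegativity of the $r_{ij}$.
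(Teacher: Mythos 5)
Your proof is correct and follows essentially the same route as the paper: both arguments sum the defining equalities against the corresponding valid inequalities (your identities $r_{12}+r_{26}+r_{67}=r_{17}+r_{22}+r_{66}$ and $r_{11}+r_{44}+r_{77}=r_{14}+r_{47}+r_{17}$ are exactly the paper's ``both sides sum to the same value'' computations), and both use the integrality/distinctness of the Ap\'ery elements $h_i$ for (R7)--(R9). The only differences are cosmetic (you derive (R8) from (R9) rather than from (R7)), and your explicit remark that (R7)--(R9) genuinely need the lattice point in $F^{\circ}$ while (R5)--(R6) do not is a correct and worthwhile clarification of the paper's implicit hypothesis.
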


\begin{proof}
Let $H$ be a semigroup such that $\mu(H)\in F^{\circ}$.
Let $\{0,h_1,\dots,h_8\}$ be the Ap\'ery set of $H$.

(R5): Assume $(1,2),(2,6),(6,7)\in \Delta_F$.
It follows that $h_1+h_2=h_3 \le 2h_6$, $h_2+h_6=h_8 \le h_1+h_7$, and $h_6+h_7=h_4 \le 2h_2$.
Taking a sum of all these inequalities, we have a same value $h_2+2h_2+2h_6+h_7$ in both left and right hand sides.
Thus, all of the inequalities must be equalities.
This means that $(1,7),(2,2),(6,6)\in \Delta_F$.

(R6): This can be seen by similar argument to (R5).

(R7): Assume $(1,4),(1,7),(4,4),(7,7) \in \Delta_F$.
It follows that $2h_4=h_8=h_1+h_7$ and $2h_7=h_5=h_1+h_4$.
Therefore we have $3(h_7-h_4)=0$.
As $h_4\not=h_7$, this leads a contradiction.

(R8): Assume $(1,4),(1,7),(4,7)\in \Delta_F$.
It follows that $h_1+h_4=h_5 \le 2h_7$, $h_1+h_7=h_8 \le 2h_4$, and $h_4+h_7=h_2 \le 2h_1$.
Taking a sum of all these inequalities, we have a same value $2h_1+2h_4+2h_7$ in both left and right hand sides.
Thus, all of the inequalities must be equalities.
This means that $(1,1),(4,4),(7,7)\in \Delta_F$.
Due to (R7), this cannot occur.

(R9): This can be seen by similar argument to (R5).
\end{proof}

The following table gives a list all the subsets $\Delta \subseteq \{(i,j)\mid 1 \le i \le j \le 8, i+j\not=9\}$ satisfying the rules (R1)-(R4) above, where each $(a,b,c,d)$ is an entry of the first column of Table \ref{table}:

{
\begin{longtable}[h]{|c|c|c|c|}
\caption{}
\label{table2}
\\
\hline
No. & $(a,b,c,d)$ & $\Delta$ & Sample $H$
\endfirsthead
\hline

$\counterA{}$ & $(1,2,3,4)$ & $\{(1,4),(2,4),(3,4),(4,4)\}$ & $\langle 9, 13, 19 ,20, 21\rangle$\\

\hline

$\counterA{}$ & $(1,2,3,4)$ & $\{(1,4),(3,3),(3,4),(4,4)\}$ & $\langle 9, 12 ,13, 19, 29\rangle$\\

\hline

$\counterA{}$ & $(1,2,3,4)$ & $\{(1,4),(2,3),(2,4), (3,4), (4,4)\}$ & $\langle 9, 11 ,13, 19, 21\rangle$ \\

\hline

$\counterA{}$ & $(1,2,3,4)$ & $\{(1,4),(2,3),(3,3), (3,4), (4,4)\}$ & $\langle 9, 12 ,13, 19, 20\rangle$ \\

\hline

$\counterA{}$ & $(1,2,3,4)$ & $\{(1,4),(2,4),(3,3), (3,4), (4,4)\}$ & $\langle 9, 13 ,19, 21, 29\rangle$ \\

\hline

$\counterA{}$ & $(1,2,3,4)$ & $\{(1,4),(2,3),(2,4),(3,3), (3,4), (4,4)\}$ & $\langle 9, 10 ,11, 12, 13\rangle$ \\

\hline

$\counterA{}$ & $(1,2,3,5)$ & $\{(1,3),(1,5),(2,5),(3,5)\}$ & $\langle 9, 14 ,19, 21, 29\rangle$ \\

\hline

$\counterA{}$ & $(1,2,3,5)$ & $\{(1,3),(3,3),(2,5),(3,5)\}$ & $\langle 9, 12 ,14, 19, 20\rangle$ \\

\hline

$\counterA{}$ & $(1,2,3,5)$ & $\{(2,2),(1,5),(2,5),(3,5)\}$ & $\langle 9, 14 ,19, 20, 30\rangle$ \\

\hline

$\counterA{}$ & $(1,2,3,5)$ & $\{(1,3),(1,5),(3,3),(2,5),(3,5)\}$ & $\langle 9, 19 ,21, 23, 29\rangle$ \\

\hline

$\counterA{}$ & $(1,2,3,5)$ & $\{(1,3),(2,2),(1,5),(2,5),(3,5)\}$ & $\langle 9, 14 ,19, 20, 21\rangle$ \\

\hline

$\counterA{}$ & $(1,2,3,5)$ & $\{(1,3),(2,2),(3,3),(2,5),(3,5)\}$ & $\langle 9, 12 ,20, 23, 28\rangle$ \\

\hline

$\counterA{}$ & $(1,2,3,5)$ & $\{(2,2),(1,5),(3,3),(2,5),(3,5)\}$ & $\langle 9, 23 ,29, 30, 37\rangle$ \\

\hline

$\counterA{}$ & $(1,2,3,5)$ & $\{(1,3),(2,2),(1,5),(3,3),(2,5),(3,5)\}$ & $\langle 9, 10, 11, 12, 14\rangle$ \\

\hline

$\counterA{}$ & $(1,2,3,6)$ & $\{(1,3),(2,3),(1,6),(2,6)\}$ & $\langle 9, 12, 15, 19, 20\rangle$ \\

\hline

$\counterA{}$ & $(1,2,3,6)$ & $\{(2,2),(2,3),(1,6),(2,6)\}$ & $\langle 9, 11, 15, 19, 21\rangle$ \\

\hline

$\counterA{}$ & $(1,2,3,6)$ & $\{(1,3),(2,2),(2,3),(1,6),(2,6)\}$ & $\langle 9, 10, 11, 12, 15\rangle$ \\

\hline

$\counterA{}$ & $(1,2,3,7)$ & $\{(1,3),(2,3), (3,3),(1,7)\}$ & $\langle 9, 12, 19, 25, 29\rangle$ \\

\hline

$\counterA{}$ & $(1,2,3,7)$ & $\{(2,2),(2,3),(3,3),(1,7)\}$ & $\langle 9, 20, 21, 25, 28\rangle$ \\

\hline

$\counterA{}$ & $(1,2,3,7)$ & $\{(2,2),(7,7),(3,3),(1,7)\}$ & $\langle 9, 16, 20, 21, 28\rangle$ \\

\hline

$\counterA{}$ & $(1,2,3,7)$ & $\{(1,3),(2,3),(7,7),(3,3),(1,7)\}$ & $\langle 9,12, 16, 19, 20\rangle$ \\

\hline

$\counterA{}$ & $(1,2,3,7)$ & $\{(1,3),(2,2),(2,3),(3,3),(1,7)\}$ & $\langle 9,10, 11, 12, 16\rangle$ \\

\hline

$\counterA{}$ & $(1,2,3,7)$ & $\{(1,3),(2,2),(7,7),(3,3),(1,7)\}$ & $\langle 9,16, 19, 20, 21\rangle$ \\

\hline

$\counterA{}$ & $(1,2,3,7)$ & $\{(2,2),(2,3),(7,7),(3,3),(1,7)\}$ & $\langle 9,28, 29, 34, 39\rangle$ \\

\hline

$\counterA{}$ & $(1,2,3,7)$ & $\{(1,3), (2,2),(2,3),(7,7),(3,3),(1,7)\}$ & $\langle 9,21, 25, 29, 37\rangle$ \\

\hline

$\counterA{}$ & $(1,2,3,8)$ & $\{(1,3),(2,3),(3,3),(8,8)\}$ & $\langle 9,12, 17, 19, 20\rangle$ \\

\hline

$\counterA{}$ & $(1,2,3,8)$ & $\{(1,3),(2,2),(2,3),(3,3),(8,8)\}$ & $\langle 9,12, 17, 20, 28\rangle$ \\

\hline

$\counterA{}$ & $(1,2,4,5)$ & $\{(1,2),(1,5),(2,5),(4,4)\}$ & $\langle 9,14, 19, 20, 22\rangle$ \\

\hline

$\counterA{}$ & $(1,2,4,5)$ & $\{(1,2),(2,4),(2,5),(4,4)\}$ & $\langle 9,11, 13, 14, 19\rangle$ \\

\hline

$\counterA{}$ & $(1,2,4,5)$ & $\{(1,2),(1,5),(2,4),(2,5),(4,4)\}$ & $\langle 9,10, 11, 13, 14\rangle$ \\

\hline

$\counterA{}$ & $(1,2,4,7)$ & $\{(1,2),(1,4),(2,4),(1,7)\}$ & $\langle 9,19,20, 31, 34\rangle$ \\

\hline

$\counterA{}$ & $(1,2,4,7)$ & $\{(1,2),(7,7),(2,4),(4,4)\}$ & $\langle 9,20,22,25, 37\rangle$ \\

\hline

$\counterA{}$ & $(1,2,4,7)$ & $\{(1,2),(7,7),(2,4),(1,7)\}$ & $\langle 9,16,19,20, 22\rangle$ \\

\hline

$\counterA{}$ & $(1,2,4,7)$ & $\{(1,2),(1,4),(2,4),(1,7),(4,4)\}$ & $\langle 9,10,11,13,16\rangle$ \\

\hline

$\counterA{}$ & $(1,2,4,7)$ & $\{(1,2),(7,7),(2,4),(1,7),(4,4)\}$ & $\langle 9,20,25,31,37\rangle$ \\

\hline

$\counterA{}$ & $(1,2,4,7)$ & $\{(1,2),(1,4),(7,7),(2,4),(1,7)\}$ & $\langle 9,19,20,25,31\rangle$ \\

\hline

$\counterA{}$ & $(1,2,4,7)$ & $\{(1,2),(1,4),(7,7),(2,4),(4,4)\}$ & $\langle 9,11,13,16,19\rangle$ \\

\hline

$\counterA{}$ & $(1,2,4,7)$ & $\{(1,2),(1,4),(7,7),(2,4),(1,7),(4,4)\}$ & $\emptyset$ \\

\hline

$\counterA{}$ & $(1,2,6,7)$ & $\{(1,2),(2,2),(7,7),(2,6)\}$ & $\langle 9,11,16,24,28\rangle$ \\

\hline

$\counterA{}$ & $(1,2,6,7)$ & $\{(1,2),(6,7),(7,7),(1,7)\}$ & $\langle 9,25,28,38,42\rangle$ \\

\hline

$\counterA{}$ & $(1,2,6,7)$ & $\{(1,2),(6,7),(7,7),(2,6)\}$ & $\emptyset$ \\

\hline

$\counterA{}$ & $(1,2,6,7)$ & $\{(6,6),(2,2),(7,7),(1,7)\}$ & $\emptyset$ \\

\hline

$\counterA{}$ & $(1,2,6,7)$ & $\{(1,2),(2,2),(7,7),(1,7),(2,6)\}$ & $\langle 9,11,16,19,24\rangle$  \\

\hline

$\counterA{}$ & $(1,2,6,7)$ & $\{(1,2),(6,7),(7,7),(1,7),(2,6)\}$ & $\emptyset$ \\

\hline

$\counterA{}$ & $(1,2,6,7)$ & $\{(1,2),(2,2),(6,7),(7,7),(1,7)\}$ & $\langle 9,16,19,20,24\rangle$  \\

\hline

$\counterA{}$ & $(1,2,6,7)$ & $\{(1,2),(2,2),(6,7),(7,7),(2,6)\}$ & $\emptyset$ \\

\hline

$\counterA{}$ & $(1,2,6,7)$ & $\{(6,6),(2,2),(7,7),(1,7),(2,6)\}$ & $\emptyset$ \\

\hline

$\counterA{}$ & $(1,2,6,7)$ & $\{(6,6),(6,7),(7,7),(1,7),(2,6)\}$ & $\langle 9,15,16,19,20\rangle$  \\

\hline

$\counterA{}$ & $(1,2,6,7)$ & $\{(6,6),(2,2),(6,7),(7,7),(1,7)\}$ & $\emptyset$ \\

\hline

$\counterA{}$ & $(1,2,6,7)$ & $\{(1,2),(6,6),(2,2),(7,7),(1,7)\}$ & $\emptyset$ \\

\hline

$\counterA{}$ & $(1,2,6,7)$ & $\{(1,2),(6,6),(2,2),(7,7),(2,6)\}$ & $\langle 9,11,15,16,19\rangle$  \\

\hline

$\counterA{}$ & $(1,2,6,7)$ & $\{(1,2),(6,6),(6,7),(7,7),(1,7)\}$ & $\langle 9,19,24,25,29\rangle$  \\

\hline

$\counterA{}$ & $(1,2,6,7)$ & $\{(1,2),(6,6),(6,7),(7,7),(2,6)\}$ & $\emptyset$  \\

\hline

$\counterA{}$ & $(1,2,6,7)$ & $\{(1,2),(2,2),(6,7),(7,7),(1,7),(2,6)\}$ & $\emptyset$  \\

\hline

$\counterA{}$ & $(1,2,6,7)$ & $\{(1,2),(6,6),(2,2),(7,7),(1,7),(2,6)\}$ & $\emptyset$  \\

\hline

$\counterA{}$ & $(1,2,6,7)$ & $\{(1,2),(6,6),(6,7),(7,7),(1,7),(2,6)\}$ & $\emptyset$  \\

\hline

$\counterA{}$ & $(1,2,6,7)$ & $\{(1,2),(6,6),(2,2),(6,7),(7,7),(1,7)\}$ & $\emptyset$  \\

\hline

$\counterA{}$ & $(1,2,6,7)$ & $\{(1,2),(6,6),(2,2),(6,7),(7,7),(2,6)\}$ & $\emptyset$  \\

\hline

$\counterA{}$ & $(1,2,6,7)$ & $\{(6,6),(2,2),(6,7),(7,7),(1,7),(2,6)\}$ & $\emptyset$  \\

\hline

$\counterA{}$ & $(1,2,6,7)$ & $\{(1,2),(6,6),(2,2),(6,7),(7,7),(1,7),(2,6)\}$ & $\langle 9,19,24,29,34\rangle$  \\

\hline

$\counterA{}$ & $(1,2,6,8)$ & $\{(1,2),(2,2),(6,8),(1,6)\}$ & $\langle 9,19,20,24,26\rangle$  \\

\hline

$\counterA{}$ & $(1,2,6,8)$ & $\{(1,2),(2,2),(6,8),(8,8)\}$ & $\langle 9,17,19,20,24\rangle$  \\

\hline

$\counterA{}$ & $(1,2,6,8)$ & $\{(6,6),(2,2),(6,8),(1,6)\}$ & $\langle 9,15,19,20,26\rangle$  \\

\hline

$\counterA{}$ & $(1,2,6,8)$ & $\{(1,2),(2,2),(6,8),(1,6),(8,8)\}$ & $\langle 9,19,26,29,33\rangle$  \\

\hline

$\counterA{}$ & $(1,2,6,8)$ & $\{(1,2),(6,6),(2,2),(6,8),(1,6)\}$ & $\langle 9,19,24,26,29\rangle$  \\

\hline

$\counterA{}$ & $(1,2,6,8)$ & $\{(1,2),(6,6),(2,2),(6,8),(8,8)\}$ & $\langle 9,17,20,24,28\rangle$  \\

\hline

$\counterA{}$ & $(1,2,6,8)$ & $\{(6,6),(2,2),(6,8),(1,6),(8,8)\}$ & $\langle9,15,17,19,20\rangle$  \\

\hline

$\counterA{}$ & $(1,2,6,8)$ & $\{(1,2),(6,6),(2,2),(6,8),(1,6),(8,8)\}$ & $\langle 9,11,15,17,19\rangle$  \\

\hline

$\counterA{}$ & $(1,2,7,8)$ & $\{(1,2),(2,2),(7,7),(7,8)\}$ & $\langle 9,16,17,19,20\rangle$  \\

\hline

$\counterA{}$ & $(1,3,4,6)$ & $\{(1,1),(1,4),(1,6),(3,4),(4,4)\}$ & $\langle 9,10,12,13,15\rangle$  \\

\hline

$\counterA{}$ & $(1,3,4,7)$ & $\{(1,1),(1,4),(3,3),(1,7)\}$ & $\langle 9,19,21,31,34\rangle$  \\

\hline

$\counterA{}$ & $(1,3,4,7)$ & $\{(1,1),(7,7),(3,3),(4,4)\}$ & $\emptyset$ \\

\hline

$\counterA{}$ & $(1,3,4,7)$ & $\{(4,7),(1,4),(3,3),(1,7)\}$ & $\emptyset$ \\

\hline

$\counterA{}$ & $(1,3,4,7)$ & $\{(4,7),(1,4),(3,3),(4,4)\}$ & $\langle 9,13,21,25,28\rangle$  \\

\hline

$\counterA{}$ & $(1,3,4,7)$ & $\{(4,7),(7,7),(3,3),(1,7)\}$ & $\langle 9,16,21,28,31\rangle$  \\

\hline

$\counterA{}$ & $(1,3,4,7)$ & $\{(1,1),(1,4),(3,3),(1,7),(4,4)\}$ & $\langle 9,10,12,13,16\rangle$  \\

\hline

$\counterA{}$ & $(1,3,4,7)$ & $\{(1,1),(1,4),(7,7),(3,3),(1,7)\}$ & $\langle 9,19,21,25,31\rangle$  \\

\hline

$\counterA{}$ & $(1,3,4,7)$ & $\{(1,1),(1,4),(7,7),(3,3),(4,4)\}$ & $\emptyset$ \\

\hline

$\counterA{}$ & $(1,3,4,7)$ & $\{(1,1),(7,7),(3,3),(1,7),(4,4)\}$ & $\emptyset$ \\

\hline

$\counterA{}$ & $(1,3,4,7)$ & $\{(4,7),(1,4),(3,3),(1,7),(4,4)\}$ & $\emptyset$ \\

\hline

$\counterA{}$ & $(1,3,4,7)$ & $\{(4,7),(1,4),(7,7),(3,3),(1,7)\}$ & $\emptyset$ \\

\hline

$\counterA{}$ & $(1,3,4,7)$ & $\{(4,7),(1,4),(7,7),(3,3),(4,4)\}$ &  $\langle 9,12,13,16,19\rangle$  \\

\hline

$\counterA{}$ & $(1,3,4,7)$ & $\{(4,7),(7,7),(3,3),(1,7),(4,4)\}$ &  $\langle 9,16,21,22,28\rangle$  \\

\hline

$\counterA{}$ & $(1,3,4,7)$ & $\{(1,1),(4,7),(1,4),(3,3),(1,7)\}$ &  $\emptyset$ \\

\hline

$\counterA{}$ & $(1,3,4,7)$ & $\{(1,1),(4,7),(1,4),(3,3),(4,4)\}$ &  $\langle 9,13,19,21,25\rangle$  \\

\hline

$\counterA{}$ & $(1,3,4,7)$ & $\{(1,1),(4,7),(7,7),(3,3),(1,7)\}$ &  $\langle 9,12,16,19,22\rangle$  \\

\hline

$\counterA{}$ & $(1,3,4,7)$ & $\{(1,1),(4,7),(7,7),(3,3),(4,4)\}$ &  $\emptyset$ \\

\hline

$\counterA{}$ & $(1,3,4,7)$ & $\{(1,1),(1,4),(7,7),(3,3),(1,7),(4,4)\}$ &  $\emptyset$ \\

\hline

$\counterA{}$ & $(1,3,4,7)$ & $\{(1,1),(4,7),(1,4),(7,7),(3,3),(1,7)\}$ &  $\emptyset$ \\

\hline

$\counterA{}$ & $(1,3,4,7)$ & $\{(1,1),(4,7),(1,4),(7,7),(3,3),(4,4)\}$ &  $\emptyset$ \\

\hline

$\counterA{}$ & $(1,3,4,7)$ & $\{(1,1),(4,7),(1,4),(3,3),(1,7),(4,4)\}$ &  $\emptyset$ \\

\hline

$\counterA{}$ & $(1,3,4,7)$ & $\{(1,1),(4,7),(7,7),(3,3),(1,7),(4,4)\}$ &  $\emptyset$ \\

\hline

$\counterA{}$ & $(1,3,4,7)$ & $\{(4,7),(1,4),(7,7),(3,3),(1,7),(4,4)\}$ &  $\emptyset$ \\

\hline

$\counterA{}$ & $(1,3,4,7)$ & $\{(1,1),(4,7),(1,4),(7,7),(3,3),(1,7),(4,4)\}$ & $\emptyset$ \\

\hline

$\counterA{}$ & $(1,3,6,8)$ & $\{(1,1),(1,3),(6,8),(1,6)\}$ &  $\langle 9,10,15,17,21\rangle$  \\

\hline

$\counterA{}$ & $(1,3,6,8)$ & $\{(1,1),(1,3),(6,8),(8,8)\}$ &  $\langle 9,17,19,24,30\rangle$  \\

\hline

$\counterA{}$ & $(1,3,6,8)$ & $\{(3,8),(1,3),(6,8),(1,6)\}$ &  $\langle 9,12,15,26,28\rangle$  \\

\hline

$\counterA{}$ & $(1,3,6,8)$ & $\{(3,8),(1,3),(6,8),(8,8)\}$ &  $\langle 9,12,15,17,28\rangle$  \\

\hline

$\counterA{}$ & $(1,3,6,8)$ & $\{(1,1),(1,3),(6,8),(1,6),(8,8)\}$ &  $\langle 9,24,26,28,39\rangle$  \\

\hline

$\counterA{}$ & $(1,3,6,8)$ & $\{(1,1),(3,8),(1,3),(6,8),(1,6)\}$ &  $\langle 9,12,15,19,26\rangle$  \\

\hline

$\counterA{}$ & $(1,3,6,8)$ & $\{(1,1),(3,8),(1,3),(6,8),(8,8)\}$ &  $\langle 9,17,19,21,24\rangle$  \\

\hline

$\counterA{}$ & $(1,3,6,8)$ & $\{(3,8),(1,3),(6,8),(1,6),(8,8)\}$ &  $\langle 9,12,15,17,19\rangle$  \\

\hline

$\counterA{}$ & $(1,3,6,8)$ & $\{(1,1),(3,8),(1,3),(6,8),(1,6),(8,8)\}$ &  $\langle 9,15,17,19,21\rangle$  \\

\hline

$\counterA{}$ & $(1,4,6,7)$ & $\{(1,1),(6,6),(1,4),(1,7)\}$ &  $\langle 9,19,24,31,34\rangle$  \\

\hline

$\counterA{}$ & $(1,4,6,7)$ & $\{(1,1),(6,6),(7,7),(4,4)\}$ &  $\emptyset$ \\

\hline

$\counterA{}$ & $(1,4,6,7)$ & $\{(4,7),(6,6),(1,4),(1,7)\}$ &  $\emptyset$ \\

\hline

$\counterA{}$ & $(1,4,6,7)$ & $\{(4,7),(6,6),(1,4),(4,4)\}$ &  $\langle 9,22,24,34,37\rangle$ \\

\hline

$\counterA{}$ & $(1,4,6,7)$ & $\{(4,7),(6,6),(7,7),(1,7)\}$ &  $\langle 9,24,25,37,40\rangle$ \\

\hline

$\counterA{}$ & $(1,4,6,7)$ & $\{(1,1),(6,6),(1,4),(1,7),(4,4)\}$ &  $\langle 9,15,19,22,25\rangle$ \\

\hline

$\counterA{}$ & $(1,4,6,7)$ & $\{(1,1),(6,6),(1,4),(7,7),(1,7)\}$ &  $\langle 9,15,19,25,31\rangle$ \\

\hline

$\counterA{}$ & $(1,4,6,7)$ & $\{(1,1),(6,6),(1,4),(7,7),(4,4)\}$ &  $\emptyset$ \\

\hline

$\counterA{}$ & $(1,4,6,7)$ & $\{(1,1),(6,6),(7,7),(1,7),(4,4)\}$ & $\emptyset$ \\

\hline

$\counterA{}$ & $(1,4,6,7)$ & $\{(1,1),(4,7),(6,6),(1,4),(1,7)\}$ & $\emptyset$ \\

\hline

$\counterA{}$ & $(1,4,6,7)$ & $\{(1,1),(4,7),(6,6),(1,4),(4,4)\}$ & $\langle 9,13,15,19,25\rangle$  \\

\hline

$\counterA{}$ & $(1,4,6,7)$ & $\{(1,1),(4,7),(6,6),(7,7),(1,7)\}$ & $\langle 9,15,16,19,22\rangle$  \\

\hline

$\counterA{}$ & $(1,4,6,7)$ & $\{(1,1),(4,7),(6,6),(7,7),(4,4)\}$ & $\emptyset$ \\

\hline

$\counterA{}$ & $(1,4,6,7)$ & $\{(4,7),(6,6),(1,4),(1,7),(4,4)\}$ & $\emptyset$ \\

\hline

$\counterA{}$ & $(1,4,6,7)$ & $\{(4,7),(6,6),(1,4),(7,7),(1,7)\}$ & $\emptyset$ \\

\hline

$\counterA{}$ & $(1,4,6,7)$ & $\{(4,7),(6,6),(1,4),(7,7),(4,4)\}$ & $\langle 9,13,15,16,19 \rangle$   \\

\hline

$\counterA{}$ & $(1,4,6,7)$ & $\{(4,7),(6,6),(7,7),(1,7),(4,4)\}$ & $\langle 9,15,16,22,28 \rangle$   \\

\hline

$\counterA{}$ & $(1,4,6,7)$ & $\{(1,1),(6,6),(1,4),(7,7),(1,7),(4,4)\}$ & $\emptyset$  \\

\hline

$\counterA{}$ & $(1,4,6,7)$ & $\{(1,1),(4,7),(6,6),(1,4),(1,7),(4,4)\}$ & $\emptyset$  \\

\hline

$\counterA{}$ & $(1,4,6,7)$ & $\{(1,1),(4,7),(6,6),(1,4),(7,7),(1,7)\}$ & $\emptyset$  \\

\hline

$\counterA{}$ & $(1,4,6,7)$ & $\{(1,1),(4,7),(6,6),(1,4),(7,7),(4,4)\}$ & $\emptyset$  \\

\hline

$\counterA{}$ & $(1,4,6,7)$ & $\{(1,1),(4,7),(6,6),(7,7),(1,7),(4,4)\}$ & $\emptyset$ \\

\hline

$\counterA{}$ & $(1,4,6,7)$ & $\{(4,7),(6,6),(1,4),(7,7),(1,7),(4,4)\}$ & $\emptyset$ \\

\hline

$\counterA{}$ & $(1,4,6,7)$ & $\{(1,1),(4,7),(6,6),(1,4),(7,7),(1,7),(4,4)\}$ & $\emptyset$  \\

\hline
\end{longtable}
}

\begin{remark}[Remark on Table 2]
\begin{enumerate}[ \rm(1)]
\item Due to (R5), $\Delta$ of no.\ 41, 44, 46, 53, 54, 56 and 58 are not derived from some faces $F$.
\item Due to (R6), $\Delta$ of no.\ 42, 47, 49, 50,  55, 57 and 59 are not derived from some faces $F$.
\item Due to (R7), $\Delta$ of no.\ 38, 88, 94, 121,  126 and 127 are not derived from some faces $F$.
\item Due to (R8), $\Delta$ of no.\ 73, 80, 81, 84, 89, 91, 93, 106, 113, 117, 118, 122 and 123 are not derived from some faces $F$.
\item Due to (R9), $\Delta$ of no.\ 72, 78, 79, 87,  90, 92, 105, 111, 112, 116, 124 and 125 are not derived from some faces $F$.
\end{enumerate}
\end{remark}

Next, we investigate the existence of semidualizing modules of $A_H$ via the defining ideal of $A_H$.
Take a system of minimal generators $m_0=9,m_1,m_2,m_3,m_4$ of $H$ such that $m_1<m_2<m_3<m_4$.
Since $A_H$ is a local Artinian ring and generated by four elements $\bar{t}^{m_1},\bar{t}^{m_2},\bar{t}^{m_3},\bar{t}^{m_4}$ as a $k$-algebra, we get a surjection $\phi \colon k[\![x,y,z,w]\!] \to A_H$ of local $k$-algebras.
We set $I_H$ to be the kernel of $\phi$.
Once we know generators of $I_H$, we can verify whether $I_H$ is Burch or not by just examining the inequality of Definition \ref{defburch} for $I_H$.
Remark that if $I_H$ is Burch, then it follows by Corollary \ref{burchsemi} with Remark \ref{burchideal} that $A_H$ has only trivial semidualizing modules.
Our computations are presented in the following table.

\begin{longtable}{|c|c|c|c|}
\caption{}
\label{table3}
\\
\hline
No. & Sample $H$ & $I_H$ & Is $I_H$ Burch?
\endfirsthead

\hline
1 & $\langle 9, 13, 19 ,20, 21\rangle$ & $(x^2,xy,xz,xw^2,y^2,yz,yw^2,z^2,zw^2,w^3)$ & Yes\\

\hline
2 & $\langle 9, 12, 13, 19, 29\rangle$ & $(x^3,x^2y,xy^2,xz,xw,y^3,y^2z,yw,z^2,zw,w^2)$ & Yes\\

\hline
3 & $\langle 9, 11 ,13, 19, 21\rangle$ & $(x^2,xy^2,xyw,xz,xw-yz,y^3,y^2w,z^2,zw,w^2)$ & Yes\\

\hline
4 & $\langle 9, 12 ,13, 19, 20\rangle$ & $(x^3,x^2y,xy^2,xz,xw-yz,y^3,yw,z^2,zw,w^2)$ & Yes\\

\hline
5 & $\langle 9, 13 ,19, 21, 29\rangle$ & $(x^3,x^2y,x^2z,xw-z^2,x^2w,y^2,yz,yw,zw,w^2)$ & Yes\\

\hline
6 & $\langle 9, 10 ,11, 12, 13\rangle$ & $(x^2, xy, xz, yz-xw, xw^2, y^2, z^2-yw, yw^2, zw^2, w^3)$ & Yes\\

\hline
7 & $\langle 9, 14 ,19, 21, 29\rangle$  & $(x^2, xyz, y^2, yw, z^2, zw, w^2)$ & Yes\\

\hline
8 & $\langle 9, 12 ,14, 19, 20\rangle$  & $(x^3, x^2y, x^2z, xw, y^2, yz, z^2, zw, w^2)$ & Yes\\

\hline
9 & $\langle 9, 14 ,19, 20, 30\rangle$  & $(x^2, xz^2, y^2, yz, yw, z^3, zw, w^2)$ & Yes\\

\hline
10 & $\langle 9, 19 ,21, 23, 29\rangle$  & $(x^2, xyz, y^2-xz, xw, yw, z^2, w^2)$ & Yes\\

\hline
11 & $\langle 9, 14 ,19, 20, 21\rangle$  & $(x^2, xyw, y^2, yz, z^2-yw, zw, w^2)$ & Yes\\

\hline
12 & $\langle 9, 12 ,20, 23, 28\rangle$  & $(x^3, x^2z, xy, y^2-xw, yw, z^2, zw, w^2)$ & Yes\\

\hline
13 & $\langle 9, 23 ,29, 30, 37\rangle$  & $(x^2, xy^2, z^2-xw, y^3, yz, yw, zw, w^2)$ & Yes\\

\hline
14 & $\langle 9, 10, 11, 12, 14\rangle$  & $(x^2, xy, y^2-xz, xzw, z^2-xw, yz, w^2)$ & Yes\\

\hline
15 & $\langle 9, 12, 15, 19, 20\rangle$  & $(x^2, xy, y^2, z^2, zw, w^2)$ & No\\

\hline
16 & $\langle 9, 11, 15, 19, 21\rangle$  & $(x^3, x^2y, x^2w, xz, y^2, yw, z^2, zw, w^2)$ & Yes\\

\hline
17 & $\langle 9, 10, 11, 12, 15\rangle$  & $(x^2, xy, y^2-xz, z^2, zw, w^2)$ & No\\

\hline
18 & $\langle 9, 12, 19, 25, 29\rangle$ & $(x^3, x^2y, x^2w, xz, y^2, yw, z^2, zw, w^2)$ & Yes\\

\hline
19 & $\langle 9, 20, 21, 25, 28\rangle$ & $(x^3, x^2y, xy^2, xz, xw, y^3, yz, yw, z^2, w^2)$ & Yes\\

\hline
20 & $\langle 9, 16, 20, 21, 28\rangle$ & $(x^3, x^2w, xy, xz, y^3, yz, yw, z^3, zw, w^2)$ & Yes\\

\hline
21 & $\langle 9,12, 16, 19, 20\rangle$ & $(x^3, x^2z, xy, y^2-xw, yw, z^2, zw, w^2)$ & Yes\\

\hline
22 & $\langle 9,10, 11, 12, 16\rangle$ & $(x^2, xy, y^2-xz, xz^2, yz^2, yw, z^3, zw, w^2)$ & Yes\\

\hline
23 & $\langle 9,16, 19, 20, 21\rangle$ & $(x^3, x^2y, xz, xw, y^2, yz, z^2-yw, zw, w^3)$ & Yes\\

\hline
24 & $\langle 9,28, 29, 34, 39\rangle$ & $(x^2, xy, xw, y^3, yz, z^2-yw, zw, w^3)$ & Yes\\

\hline
25 & $\langle 9,21, 25, 29, 37\rangle$ & $(x^3, xy, y^2-xz, z^2-xw, y^2, yz, zw, w^2)$ & Yes\\

\hline
26 & $\langle 9,12, 17, 19, 20\rangle$ & $(x^3, x^2z, x^2w, xy, y^3, yz, yw, z^2, zw, w^3)$ & Yes\\

\hline
27 & $\langle 9,12, 17, 20, 28\rangle$ & $(x^3, x^2z, x^2w, xy, z^2-xw, y^3, yz, yw, zw, w^2)$ & Yes\\

\hline
28 & $\langle 9,14, 19, 20, 22\rangle$ & $(x^2, xyz, xw, y^2, yw, z^2, zw, w^3)$ & Yes\\

\hline
29 & $\langle 9,11, 13, 14, 19\rangle$ & $(x^2, xy^2, y^3, yz, yw, z^2, zw, w^2)$ & Yes\\

\hline
30 & $\langle 9,10, 11, 13, 14\rangle$ & $(x^2, xz,yz-xw, y^2, z^3, zw, w^2)$ & Yes\\

\hline
31 & $\langle 9,19,20, 31, 34\rangle$ & $(x^2, xyz, y^2, yw, z^2, zw, w^2)$ & Yes\\

\hline
32 & $\langle 9,20,22,25, 37\rangle$ & $(x^2, xy^2, xz, y^3, yz, yw, z^3, zw, w^2)$ & Yes\\

\hline
33 & $\langle 9,16,19,20, 22\rangle$ & $(x^3, x^2y, xz, xw, y^2, yw, z^2, w^2)$ & Yes\\

\hline
34 & $\langle 9,10,11,13,16\rangle$ & $(x^2, xyz, z^2-xw, y^2, yw, zw, w^2)$ & Yes\\

\hline
35 & $\langle 9,20,25,31,37\rangle$ & $(x^2, xy, y^3, yz, z^2-yw, zw, w^2)$ & Yes\\

\hline
36 & $\langle 9,19,20,25,31\rangle$ & $(x^2, xz, z^2-xw, y^2, zw, w^2)$ & No\\

\hline
37 & $\langle 9,11,13,16,19\rangle$ & $(x^2, xy^2, xz, y^3, yz, z^2-yw, zw, w^2)$ & Yes\\

\hline
39 & $\langle 9,11,16,24,28\rangle$ & $(x^3, x^2z, x^2w, xy, y^3, yz, yw, z^2, zw, w^2)$ & Yes \\

\hline
40 & $\langle 9,25,28,38,42\rangle$ & $(x^3, x^2y, x^2w, xz, y^2, yw, z^2, zw, w^2)$ & Yes \\

\hline
43 & $\langle 9,11,16,19,24\rangle$ & $(x^3, x^2z, xy, yz-xw, y^3, yw, z^2, zw, w^2)$ & Yes\\

\hline
45 & $\langle 9,16,19,20,24\rangle$ & $(x^3, x^2y, xz, z^2-xw, y^2, yw, zw, w^2)$ & Yes\\

\hline
48 & $\langle 9,15,16,19,20\rangle$ & $(x^3, x^2y, xy^2, xz, yz-xw, y^3, yw, z^2, zw, w^2)$ & Yes\\

\hline
51 & $\langle 9,11,15,16,19\rangle$ & $(x^3, x^2y, x^2w, xz, y^2-xw, yz, yw, z^3, zw, w^2)$ & Yes\\

\hline
52 & $\langle 9,19,24,25,29\rangle$ & $(x^2, xy, xz^2, y^2-xw, yz^2, yw, z^3, zw, w^2)$ & Yes\\

\hline
60 & $\langle 9,19,24,29,34\rangle$ & $(x^2, xy, y^2-xz, yz-xw, z^2-yw, zw, w^3)$ & Yes\\

\hline
61 & $\langle 9,19,20,24,26\rangle$ & $(x^2, xy^2, xw, y^3, yz, yw, z^2, w^2)$ & Yes\\

\hline
62 & $\langle 9,17,19,20,24\rangle$ & $(x^3, x^2w, xy, xz, y^2, yz^2, yw, z^3, zw, w^2)$ & Yes\\

\hline
63 & $\langle 9,15,19,20,26\rangle$ & $(x^3, x^2y, x^2w, xz, y^2, yz, yw, z^3, zw, w^2)$ & Yes\\

\hline
64 & $\langle 9,19,26,29,33\rangle$ & $(x^2, xy, xz^2, xw-y^2, yz, z^3, zw, w^2)$ & Yes\\

\hline
65 & $\langle 9,19,24,26,29\rangle$ & $(x^2, xz, xw-y^2, yw, z^2, zw, zw, w^3)$ & Yes\\

\hline
66 & $\langle 9,17,20,24,28\rangle$ & $(x^3, xy, x^2z, xw, y^3, yz, yw-z^2, zw, w^2)$ & Yes\\

\hline
67 & $\langle 9,15, 17, 19, 20\rangle$ & $(x^3, x^2y, x^2z, y^2-xz, xw, yz, z^2, zw, w^3)$ & Yes\\
\hline
68 & $\langle 9,11,15,17,19\rangle$ & $(x^3,xy,xz,xw-y^2,yw-z^2,zw,w^2)$ & Yes\\

\hline
69 & $\langle 9,16,17,19,20\rangle$ & $(x^3,x^2y,xz,xw,y^2,yz,yw,z^2,zw^2,w^3)$ & Yes\\

\hline
70 & $\langle 9,10,12,13,15\rangle$ & $(x^3,xy,x^2z,xz^2,xw-yz,y^2,yw,z^3,zw,w^2)$ & Yes\\

\hline
71 & $\langle 9,19,21,31,34\rangle$ & $(x^3,xy,x^2z,x^2w,y^3,yz,yw,z^2,zw,w^2)$ & Yes\\

\hline
74 & $\langle 9,13,21,25,28\rangle$ & $(x^3,xy,x^2z,x^2w,y^3,yz,yw,z^2,zw,w^2)$ & Yes\\

\hline
75 & $\langle 9,16,21,28,31\rangle$ & $(x^3,xy,x^2z,x^2w,y^3,yz,yw,z^2,zw,w^2)$ & Yes\\

\hline
76 & $\langle 9,10,12,13,16\rangle$ & $(x^3,xy,x^2z,xw-z^2,x^2w,y^3,yz,yw,zw,w^2)$ & Yes\\

\hline
77 & $\langle 9,19,21,25,31\rangle$ & $(x^3,xy,x^2z,xw-z^2,x^2w,y^3,yz,yw,zw,w^2)$ & Yes\\

\hline
82 & $\langle 9,12,13,16,19\rangle$ & $(x^3,xy,xz,xw,y^3,y^2z,yw-z^2,y^2w,zw,w^2)$ & Yes\\

\hline
83 & $\langle 9,16,21,22,28\rangle$ & $(x^3,xy,x^2z,xw-z^2,x^2w,y^3,yz,yw,zw,w^2)$ & Yes\\

\hline
85 & $\langle 9,13,19,21,25\rangle$ & $(x^3,x^2y,xz,xw-y^2,x^2w,yz,yw,z^3,zw,w^2)$ & Yes\\

\hline
86 & $\langle 9,12,16,19,22\rangle$ & $(x^3,xy,xz,xw,y^3,y^2z,yw-z^2,y^2w,zw,w^2)$ & Yes\\

\hline
95 &  $\langle 9,10,15,17,21\rangle$ & $(x^3,x^2y,xz,x^2w,y^2,yw,z^2,zw,w^2)$ & Yes\\

\hline
96 &  $\langle 9,17,19,24,30\rangle$ & $(x^3,xy,x^2z,xw,y^3,yz,y^2w,z^2,zw,w^2)$ & Yes\\

\hline
97 &  $\langle 9,12,15,26,28\rangle$ & $(x^2,xy,y^2,z^2,zw,w^2)$ & No\\

\hline
98 &  $\langle 9,12,15,17,28\rangle$ & $(x^2,xy,xz^2,y^2,yz^2,yw,z^3,zw,w^2)$ & Yes\\

\hline
99 &  $\langle 9,24,26,28,39\rangle$ & $(x^2,xz-y^2,xw,yz,yw,z^3,z^2w,w^2)$ & Yes\\

\hline
100 &  $\langle 9,12,15,19,26\rangle$ & $(x^2,xy,xw-z^2,y^2,zw,w^2)$ & No\\

\hline
101 &  $\langle 9,17,19,21,24\rangle$ & $(x^3,xy,xz-y^2,yw,z^2,zw,w^2)$ & Yes\\

\hline
102 &  $\langle 9,12,15,17,19\rangle$ & $(x^2,xy,y^2,yw-z^2,zw,w^2)$ & No\\

\hline
103 &  $\langle 9,15,17,19,21\rangle$ & $(x^2,xz-y^2,xw,yz,yw-z^2,w^2)$ & No\\

\hline
104 &  $\langle 9,19,24,31,34\rangle$ & $(x^3,xy,x^2z,x^2w,y^3,yz,yw,z^2,zw,w^2)$ & Yes\\

\hline
107 &  $\langle 9,22,24,34,37\rangle$ & $(x^3,xy,x^2z,x^2w,y^3,yz,yw,z^2,zw,w^2)$ & Yes\\

\hline
108 &  $\langle 9,24,25,37,40\rangle$ & $(x^3,xy,xz,xw,y^3,y^2z,y^2w,z^2,zw,w^2)$ & Yes\\

\hline
109 &  $\langle 9,15,19,22,25\rangle$ & $(x^3,xy,xz,xw,y^3,y^2z,yw-z^2,y^2w,zw,w^2)$ & Yes\\

\hline
110 &  $\langle 9,15,19,25,31\rangle$ & $(x^3,xy,xz,xw,y^3,y^2z,yw-z^2,y^2w,zw,w^2)$ & Yes\\

\hline
114 & $\langle 9,13,15,19,25\rangle$ & $(x^3,xy,x^2z,xw-z^2,x^2w,y^3,yz,yw,zw,w^2)$ & Yes\\

\hline
115 & $\langle 9,15,16,19,22\rangle$ & $(x^3,xy,xz,xw,y^3,y^2z,yw-z^2,y^2w,zw,w^2)$ & Yes\\

\hline
119 & $\langle 9,13,15,16,19 \rangle$ & $(x^3,xy,x^2z,xw-z^2,x^2w,y^3,yz,yw,zw,w^2)$ & Yes\\

\hline
120 & $\langle 9,15,16,22,28 \rangle$ & $(x^3,xy,xz,xw,y^3,y^2z,yw-z^2,y^2w,zw,w^2)$ & Yes\\

\hline
\end{longtable}

What remains to be established is whether the semigroup rings $R_H$ have a nontrivial semidualizing module or not, where $A_H$ is not Burch.
By Table \ref{table3}, such an $H$ comes from one of no.\ 15, 17, 36, 97, 100, 102, or 103.
We already saw in Example \ref{ex7.3} that $R_H$ for the Sample $H$ of no.\ 17 has a nontrivial semidualizing module.
The others are investigated bellow.

\begin{prop}\label{sample36}
Let $H=\langle9,19,20,25,31\rangle$ (the Sample of no.\ 36).
Then $R_H=k[\![t^9,t^{19},t^{20},t^{25},t^{31}]\!]$ has only trivial semidualizing modules.
\end{prop}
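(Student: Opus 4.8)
The plan is to use that $R_H$ has type $r(R_H)=4$ together with Lemma~\ref{dual} to force any nontrivial semidualizing module to be a two-generated fractional ideal, and then to eliminate all such ideals by a direct computation with the value semigroup of $H$, never needing the higher $\Ext$ vanishing.

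First I would record the canonical data. The Apér\'y set of $H=\langle 9,19,20,25,31\rangle$ with respect to $9$ is $\{0,19,20,39,31,50,51,25,44\}$, and its maximal elements under the order $a\preceq b\iff b-a\in H$ are $39,44,50,51$; hence the pseudo-Frobenius set is $\{30,35,41,42\}$, the Frobenius number is $42$, so $r(R_H)=4$ and one may take $K=(1,t,t^7,t^{12})$. Now let $C$ be any nontrivial semidualizing $R_H$-module. By Lemma~\ref{dual}(3)--(5) we have $\mu_{R_H}(C)\mu_{R_H}(C^\vee)=r(R_H)=4$ with $\mu_{R_H}(C)\notin\{1,4\}$, which forces $\mu_{R_H}(C)=2$, and by Remark~\ref{r214} we may realize $C$ as a fractional ideal with exactly two generators.

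The key reduction is to pass to a homogeneous representative: using the torus action $t\mapsto \lambda t$ on $R_H$ (equivalently, working with graded fractional ideals), I would argue that a two-generated semidualizing ideal may be taken monomial, and after normalizing by a power of $t$ it becomes $I_v=(1,t^v)$ for some gap $v\in\{1,\dots,42\}\setminus H$. For a monomial ideal, $\Hom_{R_H}(I_v,I_v)\cong (I_v:I_v)$ is again monomial, with value set $\{n\in\mathbb{Z}: n+W\subseteq W\}$ where $W=H\cup(v+H)$ is the value set of $I_v$. A short calculation isolates the criterion: $(I_v:I_v)$ strictly contains $R_H$ as soon as there exists $h\in H$ with $h+v\notin H$ but $h+2v\in H$, since then $n:=h+v$ satisfies $n\notin H$ while $n+W\subseteq W$ (indeed $n+H\subseteq v+H$ and $n+(v+H)\subseteq H$). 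Whenever such an $h$ exists, $\Hom_{R_H}(I_v,I_v)\neq R_H$, contradicting the defining homothety condition of a semidualizing module.

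It then remains to verify, for each of the $27$ gaps $v$, that such a witness $h$ exists. For the $13$ gaps with $2v\in H$ one simply takes $h=0$; for the remaining gaps $v\in\{1,\dots,8,11,12,13,15,16,21\}$ one exhibits an explicit $h$ (for instance $h=25$ for $v=7$, since $32\notin H$ and $39\in H$, or $h=9$ for $v=21$, since $30\notin H$ and $51\in H$), so in every case $\Hom_{R_H}(I_v,I_v)\supsetneq R_H$ and no $I_v$ is semidualizing. I expect the genuine obstacle to be the homogeneity reduction of the preceding paragraph rather than this finite check, which becomes routine once the criterion ``$\exists\,h\in H$ with $h+v\notin H$ and $h+2v\in H$'' is in hand. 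As a fallback that avoids appealing to homogeneity, the same candidates $I_v$ can instead be eliminated by computing $(R_H:I_v)$, $I_v^\vee=K:I_v$, and the relevant colon equality, using Proposition~\ref{prop6.3} and \cite[Lemma 4.6]{HW} exactly as in Example~\ref{ex7.3}, at the cost of a considerably longer argument.
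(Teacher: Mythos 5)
The reduction to two-generated fractional ideals via Lemma~\ref{dual} and Remark~\ref{r214} is fine, and your finite check over the monomial candidates $I_v=(1,t^v)$ is correct as far as it goes (the witness criterion ``$\exists\,h\in H$ with $h+v\notin H$, $h+2v\in H$'' does show $(I_v:I_v)\supsetneq R_H$, hence that the homothety map is not an isomorphism). But the step you yourself flag as the crux --- that a two-generated semidualizing ideal ``may be taken monomial'' via the torus action --- is a genuine gap, not a routine reduction. Knowing that the finite set of isomorphism classes of semidualizing modules is preserved by every twist $t\mapsto\lambda t$ does not by itself produce an equivariant (hence graded/monomial) representative; gradability of a rigid module over the completion is exactly the kind of statement that needs proof. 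Indeed, whether a nontrivial semidualizing module over a numerical semigroup ring is always generated by monomials is posed as an open problem at the end of this paper (Question~\ref{question:monomial}), so your argument assumes precisely what is not known. Your proposed ``fallback'' does not repair this: computing colons and applying Proposition~\ref{prop6.3} for the same monomial candidates $I_v$ still says nothing about non-monomial two-generated ideals, so it does not ``avoid appealing to homogeneity'' at all.

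The paper's proof is designed to sidestep this issue entirely: it takes an arbitrary two-generated ideal, normalizes it to $I=(1,f)$ with $v(f)>0$ for the $t$-adic valuation $v$, writes $K:I=(g,h)$, and uses the identity $K=I(K:I)$ from Remark~\ref{r214} together with the membership of $1$, $t$, and $t^7$ in $K$ to pin down $v(g)=0$, $\min\{v(f),v(h)\}=1$ with $v(f)\neq v(h)$, and finally $v(f)+v(h)=8$, contradicting $8\notin v(K)$. No homogeneity of $f$ is ever assumed. If you want to salvage your approach, you would either have to prove the gradability claim (which would be a result of independent interest, answering Question~\ref{question:monomial} for two-generated ideals) or replace it with a valuation argument of the above type that treats general $f$.
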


\begin{proof}
Let $v \colon Q(k[\![t]\!]) \to \mathbb{Z} \cup\{\infty\}$ be the valuation associated to the discrete valuation ring $k[\![t]\!]$ with $v(t)=1$.

We recall the basic properties of $v$.
For $x,y\in Q(k[\![t]\!])$, we have the following:
\begin{enumerate}
\item $v(xy)=v(x)+v(y)$.
\item $v(x+y) \ge \min\{v(x),v(y)\}$.
\item If $v(x)<v(y)$, then $v(x+y)=v(x)$.
\item If $x\in k\setminus\{0\}$, then $v(x)=0$.
\end{enumerate}
It is also easy to verify that $v(R_H)=H$.

Note that $K=(1,t,t^7,t^{12})$ is a canonical fractional ideal of $R_H$.
Writing the elements of $v(K)$ in order from the smallest to the largest, first few terms are $0,1,7,9,10$ and $12$.
Assume $I$ is a nontrivial semidualizing module of $R$.
Then $I$ can be chosen as an ideal of $R_H$ (Remark \ref{r214}).
As $I$ is $2$-generated by Lemma \ref{dual}, write $I=(a,b)$ and assume $v(b)\ge v(a)$.
Replacing $b$ by $b-u\cdot a$ for a suitable unit $u\in k^\times$, we may assume $v(b)>v(a)$.
Then, via the multiplication by $1/a$, $I$ is isomorphic to $\frac{1}{a}I=(1,\frac{b}{a})$.
Replacing $I$ by $\frac{1}{a}I$, we can assume that $I$ is generated by $1$ and $f$, where $f=\frac{b}{a}$.
Note that $v(f)=v(b)-v(a)>0$.

Since $K:I \cong I^\vee$, the fractional ideal $K:I$ is also $2$-generated by Lemma \ref{dual}.
Suppose that $K:I$ is generated by $g$ and $h$ with $v(h)\ge v(g)$.
Since $K:I \subseteq K: (1)=K \subseteq k[\![t]\!]$, both $g$ and $h$ belong to $k[\![t]\!]$, namely, $v(g),v(h) \ge 0$.
Replacing $h$ by $h-u\cdot g$ for a suitable unit $u\in k^\times$, we may assume $v(h)>v(g)$.
By Remark \ref{r214}, $K=I(K:I)$, hence $K$ is generated by $g,h,fg,fh$ as an $R_H$-module.

\begin{claim} 
$v(g)=0$.
\end{claim}

\begin{proof}[Proof of Claim 1]
Since $1\in K$, there exists $\alpha,\beta,\gamma,\delta \in R_H$ such that $\alpha g+\beta h+ \gamma fg +\delta fh=1$.
Then $v(\alpha g+\beta h+ \gamma fg +\delta fh)=v(1)=0$.
Also $v(\alpha),v(\beta),v(\gamma), v(\delta) \ge 0$, $v(fh)>v(h)>v(g)$, and $v(fg)>v(g)\ge 0$.
Hence we get 
\[
0 \le v(g) \le v(\alpha g+\beta h+ \gamma fg +\delta fh)=0,
\]
which shows $v(g)=0$.
\end{proof}

\begin{claim}
$\min\{v(f),v(h)\}=1$ and $v(f)\not=v(h)$.
\end{claim}

\begin{proof}[Proof of Claim 2]
Since $t\in K$, there exists $\alpha,\beta,\gamma,\delta \in R_H$ such that $\alpha g+\beta h+ \gamma fg +\delta fh=t$.
Then $v(\alpha g+\beta h+ \gamma fg +\delta fh)=v(t)=1$.
Set $y':=\beta h+ \gamma fg +\delta fh$.
Assume $v(\alpha)=0$.
Then 
\[
v(\alpha g)=v(g)<\min\{v(\beta h),v(\gamma fg), v(\delta fh)\} \le v(y').
\]
Therefore, $v(y)=v(\alpha g)=0$, which contradicts to the equality $v(y)=1$.
Thus $v(\alpha)>0$.
As $v(R_H)=H$, this implies $v(\alpha)\ge 9$.
Since $1=v(y) \ge \min\{\alpha g, y'\} \text{ and } v(\alpha g) \ge 9$, one has $1 = v(y')$.
On the other hand, 
\[
v(y') \ge \min\{v(\beta h), v(\gamma fg), v(\delta fh)\} \ge \min\{v(h), v(fg)\}=\min\{v(h), v(f)\}>0.
\]
Hence we obtain $\min\{v(h),v(f)\}=1$.

Also, we can verify that $v(h)\not=v(f)$; otherwise, 
\[
2=v(h)+v(f)=v(hf)\in v(K),
\]
which leads to a contradiction.
\end{proof}

Now we show a contradiction.
Since $t^7\in K$, there exists  $\alpha,\beta,\gamma,\delta \in R_H$ such that $\alpha g+\beta h+ \gamma fg +\delta fh=t^7$.
Set $y':=\beta h+ \gamma fg +\delta fh$.
By the same reason as in the proof of Claim 2, $v(\alpha)\ge 9$.
Then, since $7=v(y) \ge \min\{\alpha g, y'\}$, $v(y')=7$.
We divide the proof into two cases.

Case 1: Suppose $v(f)>v(h)$.
By Claim 2, $v(h)=1$.
If $v(\beta)=0$, then 
\[
v(\beta h)=v(h)< \min\{v(\gamma fg), v(\delta fh)\} \le v(\gamma fg+\delta fh).
\]
Therefore $v(y')=v(\beta h)=1$, which contradicts to the equalitiy $v(y')=7$.
Thus $v(\beta)>0$.
As $v(R_H)=H$, this implies $v(\beta)\ge 9$.
Since $7=v(y') \ge \min\{\beta h,\gamma fg+\delta fh\}$ and $v(\beta h) \ge 9$, one has $7=v(\gamma fg+\delta fh)$.
On the other hand,
\[
7=v(\gamma fg+\delta fh) \ge \min\{ v(\gamma fg),v(\delta fg)\} \ge \min\{v(f),v(fh)\}=v(f)>v(h)=1.
\]
In view of $f\in K$, one can deduce $v(f)=7$.
It follows that $8=v(f)+v(h)=v(fh)\in v(K)$, which leads to a contradiction.

Case 2: Suppose $v(f)<v(h)$.
In this case, a similar argument to Case 1 also works.
Therefore, we achieve $8=v(f)+v(h)=v(fh)\in v(K)$, which leads to a contradiction.
\end{proof}

\begin{prop} \label{p45}
Let $H$ be one of the following numerical semigroups:
\begin{enumerate}[ \rm(1)]
\item $\langle9,12,15,19,20\rangle$ (the Sample of no.\ 15),
\item $\langle9,12,15,26,28\rangle$ (the Sample of no.\ 97),
\item $\langle9,12,15,19,26\rangle$ (the Sample of no.\ 100),
\item $\langle9,12,15,17,19\rangle$ (the Sample of no.\ 102),
\item $\langle9,15,17,19,21\rangle$ (the Sample of no.\ 103)
\end{enumerate}
Then $R_H$ has a nontrivial semidualizing module.
\end{prop}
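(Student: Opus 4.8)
The plan is to dispatch all five semigroups by the template already executed in Example~\ref{ex7.3}. For each $H$ on the list I would produce an explicit canonical fractional ideal $K$ (computable from the pseudo-Frobenius numbers of $H$) together with a two-generated fractional ideal $I$, taken in each case of the form $I=(1,t^{a})$ for a suitable gap $a$ of $H$, and then prove that $I$ is a nontrivial semidualizing module. Nontriviality comes for free: these rings have type $r(R)=4$ (they realize the quadruple data set up in Section~\ref{section4}), and since $\mu_R(I)=2\neq 1$, Lemma~\ref{dual}(4) gives $I\not\cong R$, while $2\neq 4=r(R)$ and Lemma~\ref{dual}(5) give $I\not\cong K$. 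By Remark~\ref{r214} we may look for $I$ among genuine ideals, so the entire content reduces to verifying that $I$ is semidualizing, i.e. $\Hom_R(I,I)\cong R$ and $\Ext_R^i(I,I)=0$ for all $i\geq 1$.

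Every object that enters the argument ($K$, $I$, $I^\vee=K:I$, and all the colon ideals) is a monomial fractional ideal, hence is completely determined by its value set under the valuation $v$ with $v(t)=1$; each verification therefore collapses to an elementary comparison of subsets of $\ZZ$ built from $H$. First I would compute $I^\vee=K:I$ and check the homothety condition $I(K:I)=K$, which by Proposition~\ref{p219} is exactly the statement $\Hom_R(I,I)\cong R$. Next, mirroring Example~\ref{ex7.3}, I would compute $R:I$ and $I^\vee:I$ and verify the identity $(R:I)\,I^\vee=I^\vee:I$; by Proposition~\ref{prop6.3} (applied with the two-generated ideal $I$ and $J=I^\vee$) this says $T(I\otimes_R I^\vee)=0$, so $I\otimes_R I^\vee$ is torsion-free and \cite[Lemma 4.6]{HW} yields $\Ext_R^1(I,I)=0$. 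For the next degree I would verify the analogous identity $(R:I):I^\vee=(R:I^\vee)(R:I)$; applying Proposition~\ref{prop6.3} now with the two-generated ideal $I^\vee$ and $J=R:I$ gives $T((R:I)\otimes_R I^\vee)=0$, and combining the resulting vanishing $\Ext_R^1(R:I,I)=0$ with the short exact sequence $0\to R:I\to R^{\oplus 2}\to I\to 0$ of \cite[Lemma 3.3]{HH} produces $\Ext_R^2(I,I)\cong\Ext_R^1(R:I,I)=0$.

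It then remains to propagate the vanishing to all higher degrees, and here I would reproduce the periodicity device of Example~\ref{ex7.3}: locate an element $t^{s}\in H$ for which multiplication realizes a short exact sequence $0\to I\xrightarrow{\ \cdot t^{s}\ } R:I\to R/(R:I)\to 0$ with cyclic cokernel $R/(R:I)$. Splicing its long exact sequence with that of $0\to R:I\to R^{\oplus 2}\to I\to 0$ gives the isomorphisms $\Ext_R^i(R:I,I)\cong\Ext_R^{i+1}(I,I)$ and $\Ext_R^i(R/(R:I),I)\cong\Ext_R^{i-1}(R:I,I)\cong\Ext_R^i(I,I)$ for $i\geq 2$, and feeding the base cases $\Ext^1=\Ext^2=0$ into this recursion forces $\Ext_R^i(I,I)=0$ for every $i\geq 1$.

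The principal obstacle is not any single step but the lack of uniformity across the five cases: the value sets of $K$, $I$, and the colon ideals genuinely differ from semigroup to semigroup (for instance no.\ 103, $H=\langle 9,15,17,19,21\rangle$, lacks the generator $t^{12}$ that organizes the other four and sits on a face with a different quadruple), so the correct exponent $a$ in $I=(1,t^{a})$ and the correct multiplier $t^{s}$ in the periodicity sequence must be pinned down separately, and in each case one must check by hand that the two monomial identities $(R:I)\,I^\vee=I^\vee:I$ and $(R:I):I^\vee=(R:I^\vee)(R:I)$ really hold. I expect no.\ 103 to demand the most care, and the bulk of the work to be the explicit monomial bookkeeping rather than any conceptual difficulty.
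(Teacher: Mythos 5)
Your plan is correct, and for cases (3) and (5) it is literally what the paper does (the Example~\ref{ex7.3} template: verify $I(K:I)=K$ via Proposition~\ref{p219}, kill $\Ext^1$ and $\Ext^2$ via Proposition~\ref{prop6.3} and \cite[Lemma 4.6]{HW} together with the sequence $0\to R:I\to R^{\oplus 2}\to I\to 0$ of \cite[Lemma 3.3]{HH}, then propagate with the periodicity sequence $0\to I\to R:I\to R/(R:I)\to 0$); case (4) is also handled this way in spirit, though the paper shortcuts it by noting $A_H\cong A_{H'}$ for $H'$ the case-(3) semigroup via a permutation of variables. Where you genuinely diverge is cases (1) and (2): the paper observes that there $A_H\cong k[x,y]/(x,y)^2\otimes_k k[z,w]/(z,w)^2$ is a tensor product of two non-Gorenstein Artinian algebras and invokes \cite[Corollary 4.9]{A} plus Theorem~\ref{regular}, so no ideal computation is needed at all. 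Your uniform route does work for those two cases as well --- e.g.\ for no.\ 15 one finds $K=(1,t,t^3,t^4)$, $I=(1,t)$, $K:I=(1,t^3)$, $R:I=(t^{18},t^{19},t^{20})$, and all the required colon identities and the annihilator condition $(t^{18},t^{19}):_Rt^{20}=R:I$ check out --- but it costs five hand verifications instead of two, while the paper's structural argument for (1) and (2) is shorter and explains \emph{why} those cases work (the Artinian reduction decomposes). The only caveat is that your write-up is a plan that defers the actual monomial bookkeeping; every step is justified by the right lemma and the computations do go through, but until the exponents $a$, the multipliers $t^{s}$, and the colon-ideal identities are exhibited for each $H$, the proof is not complete as stated.
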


\begin{proof}

By Theorem \ref{regular}, it is enough to show that $A_H$ has a nontrivial semidualizing module.

(1): $A_H \cong k[x,y,z,w]/(x^2,xy,y^2,z^2,zw,w^2)$ is isomorphic to a tensor product $k[x,y]/(x^2,xy,y^2) \otimes_k k[z,w]/(z^2,zw,z^2)$ of two non-Gorenstein local $k$-algebras.
So it has nontrivial semidualizing module by \cite[Corollary 4.9]{A}.

(2): $A_H$ is isomorphic to $k[x,y,z,w]/(x^2,xy,y^2,z^2,zw,w^2)$ which we consider in (1).
Therefore, it has a nontrivial semidualizing module.

(3): We write $R=R_H=k[\![t^9,t^{12},t^{15},t^{19},t^{26}]\!]$.
Then $K=(1, t^3, t^7, t^{10})$ is a canonical fractional ideal of $R$.
Let $I=(1,t^3)$.
Then $I^{\vee}=K: I=(1, t^7)$.
Note that $\mu_R(I)=\mu_R(I^{\vee})=2$, and $\mu_R(K)=4$.
One can check that $R:I=(t^9, t^{12}, t^{15})$ and $I^{\vee}:I=(t^9, t^{12}, t^{15}, t^{16}, t^{19}, t^{22})$. Hence we have $(R:I)I^{\vee}=I^{\vee}:I$ which implies $I\otimes_R I^{\vee}$ is torsion-free by Proposition~\ref{prop6.3}, and we get $\Ext_R^1(I, I)=0$ by \cite[Lemma 4.6]{HW}.
Now we show $\Ext^2_R(I, I)=0$.
Since $I^{\vee}=(1, t^7)$, one can apply Proposition~\ref{prop6.3} one more time to get $$T((R:I)\otimes_R I^{\vee})=((R:I):I^{\vee})/(R:I^{\vee})(R:I).$$
Furthermore we have $(R:I):I^{\vee}=(t^{21}, t^{24}, t^{27}, t^{28}, t^{31}, t^{34}, t^{35}, t^{38}, t^{41})$ and $R:I^{\vee}=(t^{12}, t^{19}, t^{26})$. Therefore $(R:I):I^{\vee}=(R:I^{\vee})(R:I)$ which implies that $(R:I)\otimes_R I^{\vee}$ is torsion-free.
As $\mu_R(I)=2$, applying \cite[Lemma 3.3]{HH}, we get a short exact sequence
\[
0 \to R:I \to R^{\oplus 2} \to I \to 0.
\]
This yields that $\Ext^2_R(I, I) \cong \Ext^1_R(R:I, I)$.
Now we get $\Ext^2_R(I, I)=0$ by \cite[Lemma 4.6]{HW}. Next note that we have the short exact sequence 
\begin{equation}
0\xrightarrow{} I \xrightarrow{t^9} R:I \xrightarrow{} R/ ((t^9, t^{12}):_R t^{15}) \xrightarrow{} 0 
\end{equation}
where $(t^9, t^{12}):_R t^{15}=R:I$.
Thus we get the long exact sequence 
\begin{equation}\cdots \xleftarrow{} \Ext_R^i(I, I)\xleftarrow{} \Ext_R^i(R:I, I) \xleftarrow{} \Ext^i_R(R/(R:I), I) \xleftarrow{} \cdots 
\end{equation}

Since $\Ext_R^i(R:I, I) \cong \Ext_R^{i+1} (I,I)$ and $\Ext^i_R(R/(R:I), I) \cong \Ext^{i-1}_R(R:I, I) \cong \Ext^i_R(I, I)$ for all $i\geq 2$, we have $\Ext^i_R(I, I) = 0$ for all $i \ge 1$. 

Finally we have $\Hom_R(I, I)=R$ by Proposition \ref{p219}.
We conclude that $I$ is a nontrivial semidualizing module of $R$.

(4): 
Let $H'=\langle9,12,15,19,26\rangle$, which is the semigroup we consider in (3).
Since 
\[
A_H \cong k[x,y,z,w]/(x^2,xy,y^2,yw-z^2,zw,w^2) \text{ and } A_{H'}=k[x,y,z,w]/(x^2,xy,xw-z^2,y^2,zw,w^2),
\]
one can see that $A_H$ is isomorphic to $A_{H'}$ via the permutation $(x,y,z,w) \mapsto (y,x,z,w)$ on the variables.
Thus, by (3), $A_H$ has a nontrivial semidualizing module.

(5): We write $R=R_H=k[\![t^9,t^{15},t^{17},t^{19},t^{21}]\!]$.
Then $K=(1, t^2, t^6, t^8)$ is a canonical fractional ideal of $R$.
Let $I=(1,t^2)$.
Then $I^{\vee}=K: I=(1, t^6)$.
Note that $\mu_R(I)=\mu_R(I^{\vee})=2$, and $\mu_R(K)=4$.
One can check that $R:I=(t^{15}, t^{17}, t^{19})$ and $I^{\vee}:I=(t^{15}, t^{17}, t^{19}, t^{21},t^{23},t^{25})$. Hence we have $(R:I)I^{\vee}=I^{\vee}:I$ which implies $I\otimes_R I^{\vee}$ is torsion-free by Proposition~\ref{prop6.3}, and we get $\Ext_R^1(I, I)=0$ by \cite[Lemma 4.6]{HW}.
Now we show $\Ext^2_R(I, I)=0$.
Since $I^{\vee}=(1, t^6)$, one can apply Proposition~\ref{prop6.3} one more time to get $$T((R:I)\otimes_R I^{\vee})=((R:I):I^{\vee})/(R:I^{\vee})(R:I).$$
Furthermore we have $(R:I):I^{\vee}=(t^{24}, t^{26}, t^{28}, t^{30}, t^{32}, t^{34}, t^{36}, t^{38}, t^{40})$ and $R:I^{\vee}=(t^{9}, t^{15}, t^{21})$. Therefore $(R:I):I^{\vee}=(R:I^{\vee})(R:I)$ which implies that $(R:I)\otimes_R I^{\vee}$ is torsion-free.
As $\mu_R(I)=2$, applying \cite[Lemma 3.3]{HH}, we get a short exact sequence
\[
0 \to R:I \to R^{\oplus 2} \to I \to 0.
\]
This yields that $\Ext^2_R(I, I) \cong \Ext^1_R(R:I, I)$.
Now we get $\Ext^2_R(I, I)=0$ by \cite[Lemma 4.6]{HW}.
Next note that we have the short exact sequence 
\begin{equation}
0\xrightarrow{} I \xrightarrow{t^{15}} R:I \xrightarrow{} R/ ((t^{15}, t^{17}):_R t^{19}) \xrightarrow{} 0 
\end{equation}
where $(t^{15}, t^{17}):_R t^{19}=R:I$.
Thus we get the long exact sequence 
\begin{equation}\cdots \xleftarrow{} \Ext_R^i(I, I)\xleftarrow{} \Ext_R^i(R:I, I) \xleftarrow{} \Ext^i_R(R/(R:I), I) \xleftarrow{} \cdots 
\end{equation}

Since $\Ext_R^i(R:I, I) \cong \Ext_R^{i+1} (I,I)$ and $\Ext^i_R(R/(R:I), I) \cong \Ext^{i-1}_R(R:I, I) \cong \Ext^i_R(I, I)$ for all $i\geq 2$, we have $\Ext^i_R (I, I) = 0$ for all $i\ge 1$.

Finally we have $\Hom_R(I,I)=R$ by Proposition \ref{p219}.
We conclude that $I$ is a nontrivial semidualizing module of $R$.
\end{proof}

Next we give the list of faces corresponding to $\Delta$ of no.\ 15, 17 ,97, 100, 102, 103 in Table \ref{table3}.
\begin{enumerate}
\item $F_1$ denotes the face with $\Delta_{F_1}=\{(1,3),(2,3),(1,6),(2,6)\}$, (Note that $\Delta_{F_1}$ is the $\Delta$ of no.\  15.)
\item $F_2$ denotes the face with $\Delta_{F_2}=\{(1,3),(2,2),(2,3),(1,6),(2,6)\}$, (Note that $\Delta_{F_2}$ is the $\Delta$ of no.\ 17.)
\item $F_3$ denotes the face with
$\Delta_{F_3}=\{(3,8),(1,3),(6,8),(1,6)\}$, (Note that $\Delta_{F_3}$ is the $\Delta$ of no.\ 97.)
\item $F_4$ denotes the face with
$\Delta_{F_4}=\{(1,1),(3,8),(1,3),(6,8),(1,6)\}$, (Note that $\Delta_{F_3}$ is the $\Delta$ of no.\ 100.)
\item $F_5$ denotes the face with
$\Delta_{F_5}=\{(3,8),(1,3),(6,8),(1,6),(8,8)\}$, (Note that $\Delta_{F_3}$ is the $\Delta$ of no.\ 102.)
\item $F_6$ denotes the face with
$\Delta_{F_6}=\{(1,1),(3,8),(1,3),(6,8),(1,6),(8,8)\}$, (Note that $\Delta_{F_6}$ is the $\Delta$ of no.\ 103.)
\end{enumerate}

We illustrate all faces which belong to orbits of $F_1,\dots,F_6$ in the next table.

\begin{table}[h]
\caption{}
\label{table4}
\begin{tabular}{|c|c|c|c|c|c|c|}\hline
\multirow{2}{*}{$\mathrm{Aut}(\mathbb{Z}/9\mathbb{Z})$} & \multicolumn{6}{|c|}{Faces}\\
\cline{2-7}
{}
& no.\ 15 & no.\ 17 & no.\ 97 & no.\ 100 & no.\ 102 & no.\ 103\\
\hline
1 & $F_1$ & $F_2$ & $F_3$ & $F_4$ & $F_5$ & $F_6$ \\
\hline
2 & $F_7$ & $F_8$ & $F_9$ & $F_{10}$ & $F_{11}$ & $F_{12}$\\
\hline
4 & $F_{13}$ & $F_{14}$ & $F_{15}$ & $F_{16}$ & $F_{17}$ & $F_{18}$\\
\hline
5 & $F_{19}$ & $F_{20}$ & $F_{15}$ & $F_{17}$ & $F_{16}$ & $F_{18}$\\
\hline
7 & $F_{21}$ & $F_{22}$ & $F_{9}$ & $F_{11}$ & $F_{10}$ & $F_{12}$\\
\hline
8 & $F_{23}$ & $F_{24}$ & $F_3$ & $F_5$ & $F_4$ & $F_6$\\
\hline
\end{tabular}
\end{table}

Here, the faces $F_7,\dots,F_{24}$ are defined as follows:

\begin{enumerate}
\setcounter{enumi}{6}
\item $F_7$ denotes the face with
$\Delta_{F_7}=\{(4,6),(2,3),(3,4),(2,6)\}$,
\item $F_8$ denotes the face with
$\Delta_{F_8}=\{(4,6),(2,3),(3,4),(2,6),(4,4)\}$,
\item $F_9$ denotes the face with
$\Delta_{F_9}=\{(3,7),(6,7),(2,3),(2,6)\}$,
\item $F_{10}$ denotes the face with
$\Delta_{F_{10}}=\{(3,7),(2,2),(6,7),(2,3),(2,6)\}$,
\item $F_{11}$ denotes the face with
$\Delta_{F_{11}}=\{(3,7),(6,7),(2,3),(7,7),(2,6)\}$,
\item $F_{12}$ denotes the face with
$\Delta_{F_{12}}=\{(3,7),(2,2),(6,7),(2,3),(7,7),(2,6)\}$,
\item $F_{13}$ denotes the face with
$\Delta_{F_{13}}=\{(4,6),(3,8),(6,8),(3,4)\}$,
\item $F_{14}$ denotes the face with
$\Delta_{F_{14}}=\{(4,6),(3,8),(6,8),(3,4),(8,8)\}$,
\item $F_{15}$ denotes the face with
$\Delta_{F_{15}}=\{(4,6),(5,6),(3,4),(3,5)\}$,
\item $F_{16}$ denotes the face with
$\Delta_{F_{16}}=\{(4,6),(5,6),(3,4),(3,5),(4,4)\}$,
\item $F_{17}$ denotes the face with
$\Delta_{F_{17}}=\{(4,6),(5,5),(5,6),(3,4),(3,5)\}$,
\item $F_{18}$ denotes the face with
$\Delta_{F_{18}}=\{(4,6),(5,5),(5,6),(3,4),(3,5),(4,4)\}$,
\item $F_{19}$ denotes the face with
$\Delta_{F_{19}}=\{(5,6),(1,3),(1,6),(3,5)\}$,
\item $F_{20}$ denotes the face with
$\Delta_{F_{20}}=\{(1,1),(5,6),(1,3),(1,6),(3,5)\}$,
\item $F_{21}$ denotes the face with
$\Delta_{F_{21}}=\{(3,7),(5,6),(6,7),(3,5)\}$,
\item $F_{22}$ denotes the face with
$\Delta_{F_{22}}=\{(3,7),(5,5),(5,6),(6,7),(3,5)\}$,
\item $F_{23}$ denotes the face with
$\Delta_{F_{23}}=\{(3,7),(3,8),(6,7),(6,8)\}$,
\item $F_{24}$ denotes the face with
$\Delta_{F_{24}}=\{(3,7),(3,8),(6,7),(7,7),(6,8)\}$.
\end{enumerate}
Note that each of the $F_i^{\circ}$ has a lattice point by \cite[Proposition 2.8]{K}. 

As a conclusion, we have the following main theorem.
\begin{theorem} \label{mainthm}
A numerical semigroup ring $R_H$ of multiplicity $9$ has a nontrivial semidualizing module if and only if $\mu(H)$ belongs to one of $F_i^{\circ}$ $(i=1,\dots,24)$.
\end{theorem}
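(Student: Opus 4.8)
The plan is to fuse all the structural results of this section into a single equivalence, handling the ``if'' and ``only if'' directions simultaneously by reducing the existence question to a finite, automorphism-invariant classification of faces of $P_9$. First I would invoke Proposition \ref{mult8}(2): any multiplicity-$9$ ring $R_H$ carrying a nontrivial semidualizing module is forced to have type $4$ and embedding dimension $5$, so its Artinian quotient $A_H$ is a $9$-dimensional local $k$-algebra of embedding dimension $4$ and type $4$. This pins down the shape of the relevant faces and justifies working with quadruples $(a,b,c,d)$ as in the Observation. Next I would apply Proposition \ref{kunz2.3}(2), which turns the question into a property of the face alone: whether $R_H$ admits a nontrivial semidualizing module depends only on the face $F$ with $\mu(H)\in F^{\circ}$, and is invariant under the action of $\mathrm{Aut}(\mathbb{Z}/9\mathbb{Z})$. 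Hence it suffices to settle the question for a single representative face in each orbit.

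The second stage is combinatorial bookkeeping. The rules (R1)--(R4) from the Observation, sharpened by (R5)--(R9) from Lemma \ref{lemmafaces}, cut the admissible sets $\Delta_F$ down to the finite list recorded in Table \ref{table2}, with the entries flagged in the Remark on Table~2 discarded as not arising from any face. For each surviving candidate I would read off the Sample $H$ and its defining ideal $I_H$ from Table \ref{table3}, and use Corollary \ref{burchsemi} together with Remark \ref{burchideal} to discard every case where $I_H$ is Burch; since Burch-ness of $A_H$ is a $k$-algebra property and is therefore constant on each face by Proposition \ref{kunz2.3}(1), this elimination is legitimate at the level of faces. What remains are exactly the seven non-Burch cases no.\ $15,17,36,97,100,102,103$.

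Finally I would dispose of these seven individually. Example \ref{ex7.3} and Proposition \ref{p45} exhibit nontrivial semidualizing modules for no.\ $15,17,97,100,102,103$, whereas Proposition \ref{sample36} shows that no.\ $36$ admits only trivial ones and is thus excluded. The six surviving cases define the faces $F_1,\dots,F_6$; distributing these across their $\mathrm{Aut}(\mathbb{Z}/9\mathbb{Z})$-orbits as tabulated in Table \ref{table4} produces precisely $F_1,\dots,F_{24}$, each of which contains a lattice point by \cite[Proposition 2.8]{K}. Proposition \ref{kunz2.3}(2) then yields both implications at once: if $\mu(H)\in F_i^{\circ}$ for some $i$ then $R_H$ has a nontrivial semidualizing module, and conversely any such $R_H$ must lie in a face passing rules (R1)--(R9) and surviving the Burch test, hence in the orbit of some $F_j$, i.e.\ in some $F_i^{\circ}$.

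The main obstacle is not the per-case computations but the \emph{exhaustiveness} of the classification: one must be certain that rules (R1)--(R9) together with the Burch elimination genuinely account for every face, so that no multiplicity-$9$ semigroup carrying a nontrivial semidualizing module can slip past the list in Tables \ref{table2} and \ref{table3}. By contrast, the individual Ext-vanishing verifications (via Proposition \ref{prop6.3} and \cite[Lemma 4.6]{HW}) and the Burch checks (via Definition \ref{defburch}) are routine.
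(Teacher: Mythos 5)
Your proposal reproduces the paper's own argument: the reduction via Proposition \ref{mult8}(2) and Proposition \ref{kunz2.3}, the combinatorial elimination through rules (R1)--(R9) and Tables \ref{table2}--\ref{table3}, the Burch test via Corollary \ref{burchsemi}, and the case-by-case resolution of the seven non-Burch samples by Example \ref{ex7.3}, Proposition \ref{sample36}, and Proposition \ref{p45}, followed by the orbit expansion to $F_1,\dots,F_{24}$. This is essentially identical to how Section \ref{section4} establishes Theorem \ref{mainthm}, so there is nothing to add.
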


\if0
\newpage

{\small
\begin{landscape}
\begin{longtable}{|c|c|c|c|c|c|c|}
\caption{}
\label{table5}
\\
\hline
No. & 1 & 2 & 4 & 5 & 7 & 8
\endfirsthead
\hline
15
&
\begin{tabular}{c}
$\Delta_1:=\{(1,3),(2,3),$\\
$(1,6),(2,6)\}$
\end{tabular}
&
\begin{tabular}{c}
$\Delta_2:=\{(4,6),(2,3),$\\
$(3,4),(2,6)\}$
\end{tabular}
&
\begin{tabular}{c}
$\Delta_3:=\{(4,6),(3,8),$\\
$(6,8),(3,4)\}$
\end{tabular}
&
\begin{tabular}{c}
$\Delta_4:=\{(5,6),(1,3),$\\
$(1,6),(3,5)\}$
\end{tabular}
&
\begin{tabular}{c}
$\Delta_5:=\{(3,7),(5,6),$\\
$(6,7),(3,5)\}$
\end{tabular}
&
\begin{tabular}{c}
$\Delta_6:=\{(3,7),(3,8),$\\
$(6,7),(6,8)\}$
\end{tabular}
\\
\hline
17
&
\begin{tabular}{c}
$\Delta_7:=\{(1,3),(2,2),$\\
$(2,3),(1,6),(2,6)\}$
\end{tabular}
&
\begin{tabular}{c}
$\Delta_8:=\{(4,6),(2,3),$\\
$(4,3),(2,6),(4,4)\}$
\end{tabular}
&
\begin{tabular}{c}
$\Delta_9:=\{(4,6),(3,8),$\\
$(6,8),(3,4),(8,8)\}$
\end{tabular}
&
\begin{tabular}{c}
$\Delta_{10}:=\{(1,1),(5,6),$\\
$(1,3),(1,6),(3,5)\}$
\end{tabular}
&
\begin{tabular}{c}
$\Delta_{11}:=\{(3,7),(5,5),$\\
$(5,6),(6,7),(3,5)\}$
\end{tabular}
&
\begin{tabular}{c}
$\Delta_{12}:=\{(3,7),(3,8),$\\
$(6,7),(7,7),(6,8)\}$
\end{tabular}
\\
\hline
97
&
\begin{tabular}{c}
$\Delta_{13}:=\{(3,8),(1,3),$\\
$(6,8),(1,6)\}$
\end{tabular}
&
\begin{tabular}{c}
$\Delta_{14}:=\{(3,7),(6,7),$\\
$(2,3),(2,6)\}$
\end{tabular}
&
\begin{tabular}{c}
$\Delta_{15}:=\{(4,6),(5,6),$\\
$(3,4),(3,5)\}$
\end{tabular}
& $\Delta_{15}$
& $\Delta_{14}$
& $\Delta_{13}$
\\
\hline
100
&
\begin{tabular}{c}
$\Delta_{16}:=\{(1,1),(3,8),$\\
$(1,3),(6,8),(1,6)\}$
\end{tabular}
&
\begin{tabular}{c}
$\Delta_{17}:=\{(3,7),(2,2),$\\
$(6,7),(2,3),(2,6)\}$
\end{tabular}
&
\begin{tabular}{c}
$\Delta_{18}:=\{(4,6),(5,6),$\\
$(3,4),(3,5),(4,4)\}$
\end{tabular}
&
\begin{tabular}{c}
$\Delta_{19}:=\{(4,6),(5,5),$\\
$(5,6),(3,4),(3,5)\}$
\end{tabular}
&
\begin{tabular}{c}
$\Delta_{20}:=\{(3,7),(6,7),$\\
$(2,3),(7,7),(2,6)\}$
\end{tabular}
&
\begin{tabular}{c}
$\Delta_{21}:=\{(3,8),(1,3),$\\
$(6,8),(1,6),(8,8)\}$
\end{tabular}\\
\hline
102
&
$\Delta_{21}$
&
$\Delta_{20}$
&
$\Delta_{19}$
&
$\Delta_{18}$
&
$\Delta_{17}$
&
$\Delta_{16}$\\
\hline
103
&
\begin{tabular}{c}
$\Delta_{22}:=\{(1,1),(3,8),$\\
$(1,3),(6,8),(1,6),(8,8)\}$
\end{tabular}
&
\begin{tabular}{c}
$\Delta_{23}:=\{(3,7),(2,2),$\\
$(6,7),(2,3),(7,7),(2,6)\}$
\end{tabular}
&
\begin{tabular}{c}
$\Delta_{24}:=\{(4,6),(5,5),$\\
$(5,6),(3,4),(3,5),(4,4)\}$
\end{tabular}
&
$\Delta_{24}$
&
$\Delta_{23}$
&
$\Delta_{22}$
\\
\hline
\end{longtable}
\end{landscape}
}
\fi

\section{Existence in Higher Multiplicity}\label{section5}

Let $m\in \mathbb{N}$ and consider the numerical semigroups $H$ with multiplicity $e(R_H)=m$. 
In the case of $m=9$, Lemma~\ref{lemmafaces} gives us a way to identify which $\Delta$'s do not correspond to a face of Kunz’s polyhedron. 
As one moves from multiplicity $m=9$ to $m = 10$, the number of sets $\Delta \subseteq \{(i,j)\mid 1 \le i \le j \le m-1,i+j\not=m\}$ drastically increases. This raises the need for more efficient ways of identifying which sets $\Delta$ correspond to faces of Kunz’s polyhedron in higher multiplicity. However, in this section, we use the idea of ``gluing” in conjunction with Theorem 4.6 to provide examples of numerical semigroup rings in any multiplicity  possessing a nontrivial semidualizing module.

To understand the methods of this section, we introduce the notion of gluing. Gluing first appeared in the work of \cite{W73} and was later generalized by \cite{D76}.

\begin{definition}\label{def:gluing}
Let $A, B \subseteq \mathbb{N}$ such that $\langle A \rangle$ and $\langle B \rangle$ are two numerical semigroups. Consider integers $a \in \langle A \rangle$ and $b \in \langle B \rangle$ such that $\gcd(a,b) = 1$. Then the numerical semigroup $\langle b A, a B \rangle$ is called a \emph{gluing of $\langle A \rangle$ and $\langle B \rangle$} where $bA = \{ba: a \in A\}$ and $aB = \{ab: b \in B\}$.
\end{definition}

\begin{remark}\label{rem:nongen}
In much of the literature (see \cite{GS19,N13}), there is an extra condition that $a \notin \langle A \rangle \backslash A$ and $b \notin \langle B \rangle \backslash B$. This condition ensures that the union $b A \cup a B$ is a disjoint union \cite[Lemma 2.1]{GS19} of generators for $\langle b A, a B \rangle$. Our work does not depend on this union being disjoint, so we drop the extra condition on $a$ and $b$. A consequence of dropping this condition is that the corresponding Cohen presentation of the numerical semigroup ring is not necessarily minimal.
\end{remark}

Frequently, we take $B = \{1\}$ and thus the only restriction on the value of $b \in \langle B \rangle$ comes from $\gcd(a,b)=1$. As a result, we consider numerical semigroups coming from gluings of the form $ H' = \langle a , b H \rangle$ where $H = \langle A \rangle$. 
Next, we see how the relationship between $H$ and $H’$ extends to their corresponding numerical semigroup rings.

\begin{prop}[{\cite[Prop 2.2]{GS19}}] \label{prop:defIdeal}
Let $H = \langle a_1, \ldots, a_h\rangle$ be a numerical semigroup and $a = c_1 a_1 + \cdots + c_h a_h \in H $. Take $b \in \NN$ such that $\gcd(a,b) = 1$ and set $H' = \langle a , bH \rangle$. If $k \ldb H \rdb \cong k \ldb x_1, \ldots, x_h \rdb/ I_{H}$, then $$k \ldb H' \rdb \cong k \ldb x_1, \ldots, x_h, x_{h + 1} \rdb/ (I_{H}, f)$$ where $f = x_{h+1}^b - \prod_{i = 1}^h x_i^{c_i}$.
\end{prop}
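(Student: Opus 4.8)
The plan is to exhibit the claimed presentation as an explicit $k$-algebra isomorphism and to verify it by a rank-counting argument over the smaller ring $k\ldb H\rdb$. Write $H=\langle a_1,\dots,a_h\rangle$ and $H'=\langle a, ba_1,\dots,ba_h\rangle$, set $k\ldb bH\rdb:=k\ldb t^{ba_1},\dots,t^{ba_h}\rdb\subseteq k\ldb t\rdb$, and define the continuous $k$-algebra homomorphism
\[
\psi\colon k\ldb x_1,\dots,x_h,x_{h+1}\rdb \to k\ldb H'\rdb,\qquad x_i\mapsto t^{ba_i}\ (1\le i\le h),\quad x_{h+1}\mapsto t^{a}.
\]
Since $t^{a},t^{ba_1},\dots,t^{ba_h}$ generate $k\ldb H'\rdb$, the map $\psi$ is surjective. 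First I would check that $(I_H,f)\subseteq\ker\psi$: the binomial $f=x_{h+1}^b-\prod_i x_i^{c_i}$ is killed because $\psi(f)=t^{ab}-t^{b\sum_i c_ia_i}=t^{ab}-t^{ba}=0$, and $I_H$ is killed because $I_H$ is equally the kernel of $x_i\mapsto t^{ba_i}$ — indeed that map is the composite of $x_i\mapsto t^{a_i}$ with the isomorphism $k\ldb H\rdb\xrightarrow{\ \sim\ }k\ldb bH\rdb$ induced by $t\mapsto t^{b}$, so it has the same kernel $I_H$. Thus $\psi$ descends to a surjection $\bar\psi\colon T\to k\ldb H'\rdb$, where $T:=k\ldb x_1,\dots,x_{h+1}\rdb/(I_H,f)$, and the restriction of $\bar\psi$ to $S:=k\ldb x_1,\dots,x_h\rdb/I_H\cong k\ldb H\rdb$ is precisely the isomorphism $S\cong k\ldb bH\rdb$.

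Next I would bound the size of $T$ as an $S$-module. In $T$ the relation $f=0$ reads $x_{h+1}^b=\prod_i x_i^{c_i}$, whose right-hand side lies in $S$ and in $\m_S$ (as $a>0$ forces $\sum_i c_i\ge 1$). Writing a general power series as $\sum_{j\ge0}g_j x_{h+1}^j$ with $g_j\in k\ldb x_1,\dots,x_h\rdb$ and repeatedly substituting $\prod_i x_i^{c_i}$ for $x_{h+1}^b$, every element of $T$ becomes an $S$-combination of $1,x_{h+1},\dots,x_{h+1}^{b-1}$; the resulting series converge since $\prod_i x_i^{c_i}\in\m_S$ and $T$ is $\m_T$-adically complete. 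Hence there is an $S$-linear surjection $\alpha\colon S^{\oplus b}\to T$ carrying the standard basis to $1,x_{h+1},\dots,x_{h+1}^{b-1}$. (Alternatively, one may note that $f$ is a distinguished polynomial of degree $b$ in $x_{h+1}$ and invoke Weierstrass division to obtain this, with freeness built in.)

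On the target side, I would show that $k\ldb H'\rdb$ is a \emph{free} $k\ldb bH\rdb$-module with basis $1,t^{a},\dots,t^{(b-1)a}$. For spanning, any $n\in H'$ has the form $n=\lambda a+bh$ with $\lambda\in\NN$ and $h\in H$; writing $\lambda=qb+r$ with $0\le r\le b-1$ gives $n=ra+b(qa+h)$ with $qa+h\in H$, so $t^{n}\in t^{ra}\,k\ldb bH\rdb$. For independence — and this is the crucial place where $\gcd(a,b)=1$ enters — every exponent occurring in $t^{ja}\,k\ldb bH\rdb$ is congruent to $ja\pmod b$, and since $\gcd(a,b)=1$ the residues $ja\bmod b$ for $0\le j\le b-1$ are pairwise distinct; thus the summands $t^{ja}\,k\ldb bH\rdb$ have supports in disjoint residue classes modulo $b$ and meet only in $0$. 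Therefore $k\ldb H'\rdb=\bigoplus_{j=0}^{b-1}t^{ja}\,k\ldb bH\rdb$ is free of rank $b$ over $k\ldb bH\rdb\cong S$.

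Finally I would assemble these facts. Through the isomorphism $S\cong k\ldb bH\rdb$, the composite $\bar\psi\circ\alpha\colon S^{\oplus b}\to k\ldb H'\rdb$ sends the standard basis to $t^{0},t^{a},\dots,t^{(b-1)a}$, which is exactly the free basis found above, so $\bar\psi\circ\alpha$ is an isomorphism of $S$-modules. Because $\alpha$ is surjective and $\bar\psi\circ\alpha$ is injective, $\alpha$ must itself be injective, hence an isomorphism, and therefore $\bar\psi$ is an isomorphism as well. This yields $k\ldb H'\rdb\cong T=k\ldb x_1,\dots,x_{h+1}\rdb/(I_H,f)$, as claimed. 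The main obstacle is the directness step for $k\ldb H'\rdb$, which is precisely where the hypothesis $\gcd(a,b)=1$ is indispensable; the convergence bookkeeping in the $S$-module reduction of $T$ is the only other point demanding care.
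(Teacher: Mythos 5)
Your proof is correct, and it is worth noting that the paper itself offers no argument for this proposition: it is quoted from \cite[Prop 2.2]{GS19}, with a remark explaining that the hypotheses $a \notin \langle A\rangle\setminus A$, $b\notin\langle B\rangle\setminus B$ used there are not actually needed. The argument in \cite{GS19} is of toric/binomial type, working with the defining ideal of the semigroup ring as a binomial ideal and reducing factorizations; your route is different and more module-theoretic. You exhibit the surjection $\bar\psi\colon T=k\ldb x_1,\dots,x_{h+1}\rdb/(I_H,f)\to k\ldb H'\rdb$, bound $T$ by $b$ generators over $S\cong k\ldb H\rdb$ via Weierstrass division against the distinguished polynomial $f$, and then show $k\ldb H'\rdb=\bigoplus_{j=0}^{b-1}t^{ja}k\ldb bH\rdb$ is free of rank $b$ over $k\ldb bH\rdb$ by sorting exponents into residue classes modulo $b$ --- which is exactly where $\gcd(a,b)=1$ enters --- and conclude by comparing a surjection onto a free module of the same rank. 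All the individual steps check out: the identification $\ker(x_i\mapsto t^{ba_i})=I_H$ via the injection $t\mapsto t^b$, the spanning and independence of $1,t^a,\dots,t^{(b-1)a}$, and the final ``surjective composed with injective'' bookkeeping. This approach has the virtue of making completely transparent why the extra conditions of \cite{GS19} are dispensable, which is precisely the point of the paper's accompanying remark; the only hypothesis you genuinely use beyond $\gcd(a,b)=1$ is $a>0$ (equivalently $\sum_i c_i\ge 1$, so that $f$ is distinguished), which is implicit in any nondegenerate gluing and in all of the paper's applications.
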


\begin{remark}
The result in \cite[Prop 2.2]{GS19} differs slightly from the above. In their result, the authors consider gluings of the form $\langle bA, aB\rangle$ as described in Remark \ref{rem:nongen}. That is, they do not restrict $B = \{1\}$ but they do use the assumption that $a \in \langle A \rangle \backslash A$ and $b \in \langle B \rangle \backslash B$. However, the restrictions on $a$ and $b$ are not used in the proof, thus the result still holds for any $a \in \langle A \rangle$ and any $b \in \langle B \rangle$.
\end{remark}

\begin{cor}\label{cor:highermult}
Take $H$ and $H'$ as in Proposition \ref{prop:defIdeal}. If $k \ldb H \rdb$ has a nontrivial semidualizing module, then so does $k \ldb H' \rdb$.
\end{cor}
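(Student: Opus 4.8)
The plan is to realize $k\ldb H' \rdb$ as a quotient of a power series extension of $R := k\ldb H \rdb$ by a single regular element, and then to chain together the two transfer results Theorem~\ref{fflat} and Theorem~\ref{regular}.

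First I would record the structural identification supplied by Proposition~\ref{prop:defIdeal}. Writing $R = k\ldb x_1, \ldots, x_h \rdb / I_H$, we have
\[
k\ldb H' \rdb \cong k\ldb x_1, \ldots, x_{h+1} \rdb / (I_H, f) \cong R\ldb x_{h+1} \rdb / (f),
\]
where $f = x_{h+1}^b - \prod_{i=1}^h x_i^{c_i}$. The key point is that $I_H$ involves only $x_1, \ldots, x_h$, so quotienting by $I_H$ first produces the power series ring $S := R\ldb x_{h+1} \rdb$, and the image of $\prod_{i=1}^h x_i^{c_i}$ in $R$ is $t^{\sum c_i a_i} = t^a$. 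Thus, setting $y := x_{h+1}$, I would work with $S = R\ldb y \rdb$ and $f = y^b - t^a \in S$, so that $R' := k\ldb H' \rdb \cong S/(f)$.

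Next I would check the hypotheses needed to invoke the two theorems. Since $R$ is a one-dimensional complete local Cohen--Macaulay domain, the power series ring $S = R\ldb y \rdb$ is a two-dimensional complete local Cohen--Macaulay domain, and the inclusion $R \hookrightarrow S$ is a faithfully flat local homomorphism. The element $f = y^b - t^a$ lies in the maximal ideal of $S$ (as both $y$ and $t^a$ do) and is nonzero, so, $S$ being a domain, it is a nonzerodivisor, i.e.\ an $S$-regular element; consequently $R' = S/(f)$ is again Cohen--Macaulay. With these verifications in hand I would run the chain of implications: assuming $R = k\ldb H \rdb$ has a nontrivial semidualizing module, Theorem~\ref{fflat} applied to the faithfully flat local map $R \to S$ forces $S$ to have a nontrivial semidualizing module; then Theorem~\ref{regular}, applied to the complete Cohen--Macaulay ring $S$ and the regular element $f$, shows that $S/(f) = R'$ has a nontrivial semidualizing module exactly when $S$ does. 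Hence $k\ldb H' \rdb$ has a nontrivial semidualizing module, as claimed.

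Because the argument reduces entirely to the previously established transfer theorems, I do not expect a serious obstacle; the only points demanding care are the bookkeeping in the identification $R' \cong R\ldb y \rdb / (f)$ and confirming that $f$ is a nonzerodivisor lying in the maximal ideal, so that Theorem~\ref{regular} genuinely applies. I would also emphasize that this route yields only \emph{existence}; an explicit description of a nontrivial semidualizing module for $k\ldb H' \rdb$ in terms of those of the glued pieces is instead the content of the more refined Theorem~\ref{thm:semidualglue}.
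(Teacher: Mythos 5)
Your argument is correct, and it proves the statement, but it takes a slightly different route from the paper. You factor the map $R \to k\ldb H'\rdb$ through the power series extension $S = R\ldb x_{h+1}\rdb$ and then kill the regular element $f = x_{h+1}^b - t^a$, so you need both Theorem~\ref{fflat} (for $R \to S$) and Theorem~\ref{regular} (for $S \to S/(f)$), together with the verifications that $S$ is a complete Cohen--Macaulay local domain and that $f$ is a nonzerodivisor in its maximal ideal. The paper instead observes that, because $f$ is \emph{monic} in $x_{h+1}$ over $R$, the quotient $R[x_{h+1}]/(f) \cong k\ldb H'\rdb$ is already a free $R$-module, so $R \to k\ldb H'\rdb$ is itself a faithfully flat local homomorphism and a single application of Theorem~\ref{fflat} finishes the proof. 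Your decomposition is more explicit about where each hypothesis enters and costs nothing beyond routine checks; the paper's shortcut avoids Theorem~\ref{regular} entirely and sidesteps the need to argue that $f$ is regular, since freeness follows immediately from monicity. Both arguments rest on the same structural identification supplied by Proposition~\ref{prop:defIdeal}, and, as you note, neither produces an explicit module --- that is deferred to Theorem~\ref{thm:semidualglue}.
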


\begin{proof} Let $R=k \ldb H \rdb$ and observe that $k \ldb H’ \rdb \cong R[x_{h+1}]/(f)$ by Proposition~\ref{prop:defIdeal}. 
Since $f$ is a monic polynomial in $R[x]$, then $R[x]/(f)$ is a free $R$-module and one obtains a faithfully flat local homomorphism $k \ldb H \rdb \to k \ldb H' \rdb$. The desired result is then obtained by Theorem \ref{fflat}.
\end{proof}

\begin{theorem}\label{cor:allmult}
For $9 \leq a \in \NN$, there exists a numerical semigroup ring $R$ with $e(R) = a$ such that $R$ has a nontrivial semidualizing module.
\end{theorem}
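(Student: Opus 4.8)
The plan is to manufacture, for each target multiplicity $a \ge 9$, a single numerical semigroup ring of multiplicity $a$ carrying a nontrivial semidualizing module, by feeding a small supply of multiplicity-$9$ base rings into the gluing mechanism of Corollary~\ref{cor:highermult}. The arithmetic heart of the argument is the observation that for a gluing $H' = \langle c, bH\rangle$ (with $c \in H$ and $\gcd(c,b)=1$) the smallest positive element of $H'$ is $\min\{c,\, b\cdot e(R_H)\}$, because every generator of $H'$ equals $c$ or is of the form $b\,a_i \ge b\cdot e(R_H)$; thus $e(R_{H'}) = \min\{c,\, b\cdot e(R_H)\}$. Since Corollary~\ref{cor:highermult} promotes a nontrivial semidualizing module of $R_H$ to one of $R_{H'}$, the task reduces to choosing a base $H$ with a nontrivial semidualizing module and an element $c\in H$, together with $b$ coprime to $c$ and $9b>c$, so that $\min\{c,9b\}=c=a$.

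First I would take $H_0 = \langle 9,10,11,12,15\rangle$ from Example~\ref{ex7.3}, which has multiplicity $9$, a nontrivial semidualizing module, and Frobenius number $17$, so that every integer $\ge 18$ lies in $H_0$. For any $a \ge 18$ I set $c = a$ (legitimate since $a\in H_0$) and pick $b$ to be a prime larger than $a$; then $\gcd(a,b)=1$, $9b>a$, and the gluing $\langle a, bH_0\rangle$ has multiplicity exactly $a$ and, by Corollary~\ref{cor:highermult}, a nontrivial semidualizing module. The same recipe with $c=a$ and a suitable small $b$ (for instance $b=3,2,5,2$ for $a=10,11,12,15$) disposes of $a\in\{10,11,12,15\}$, since these also lie in $H_0$; and $a=9$ is Example~\ref{ex7.3} itself.

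This leaves only the four values $a\in\{13,14,16,17\}$, which are precisely the gaps of $H_0$ exceeding $9$, so a different base is needed for them. Here I would invoke the classification in Theorem~\ref{mainthm}: the two explicit semigroups $H_1=\langle 9,12,13,14,15\rangle$ and $H_2=\langle 9,12,15,16,17\rangle$ each have multiplicity $9$, and a direct computation of their Ap\'ery sets with respect to $9$ shows that $\mu(H_1)\in F_{18}^{\circ}$ and $\mu(H_2)\in F_{24}^{\circ}$, both of which appear in the list $F_1,\dots,F_{24}$; hence by Theorem~\ref{mainthm} both $R_{H_1}$ and $R_{H_2}$ carry nontrivial semidualizing modules. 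Now $13,14\in H_1$ and $16,17\in H_2$, so gluing $\langle c, bH_i\rangle$ with $c\in\{13,14\}$ (resp.\ $\{16,17\}$), $b$ coprime to $c$, and $9b>c$ again produces multiplicity $c$ with a nontrivial semidualizing module by Corollary~\ref{cor:highermult}.

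The step requiring the most care is exactly this last one: one must locate multiplicity-$9$ rings with nontrivial semidualizing modules that contain the troublesome small integers $13,14,16,17$ as elements, and the cheapest way to certify that $R_{H_1}$ and $R_{H_2}$ qualify is to verify that $H_1,H_2$ fall into winning faces of Kunz's polyhedron (computing the associated $\Delta$ and matching it against $\Delta_{F_{18}}$ and $\Delta_{F_{24}}$). Everything else is a routine coprimality choice. Assembling the three ranges ($a=9$; $a\in\{10,\dots,17\}$; $a\ge 18$) yields the theorem, and Theorem~\ref{thm:semidualglue} may then be applied to each gluing to exhibit the resulting semidualizing module explicitly.
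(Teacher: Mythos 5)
Your proposal is correct and follows essentially the same route as the paper: reduce to a multiplicity-$9$ base semigroup with a nontrivial semidualizing module containing the target $a$, and apply Corollary~\ref{cor:highermult}, treating $a\in\{13,14,16,17\}$ separately via Theorem~\ref{mainthm}. Your Ap\'ery-set computations check out ($\langle 9,12,13,14,15\rangle$ does land in $F_{18}^{\circ}$ and $\langle 9,12,15,16,17\rangle$ in $F_{24}^{\circ}$); the paper differs only in using other sample semigroups (faces $F_8$, $F_{22}$, $F_5$) for those exceptional multiplicities.
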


\begin{proof}
Theorem \ref{mainthm} fully classifies $a = 9$. To address $a >9$, we consider the following cases.

{\bf Case 1:} Consider $a \notin \{13,14,16,17\}$. Take $H$ as in Example \ref{ex7.3}, that is $H = \langle 9, 10, 11, 12, 15 \rangle$. For all $a \in H$, one has $a \geq 9$ and $a \notin \{13, 14, 16, 17\}$. Now, given $a$, select $b$ such that $9b > a$ and $\gcd(a,b) = 1$. Set $H' = \langle a, bH \rangle$ and observe that $k \ldb H' \rdb$ has multiplicity $a$ and, by Corollary \ref{cor:highermult}, it has a nontrivial semidualizing module.

{\bf Case 2:} Consider $a = 13$. We take $H = \langle 9, 11, 12, 13, 15 \rangle$ and note that $\mu(H) \in F_8$. Thus, by Theorem \ref{mainthm}, $k \ldb H \rdb$ has a nontrivial semidualizing module. Take $a = 13$ and consider $b \in \NN$ such that $\gcd(13,b) = 1$ and $9b > 13$. Set $H' = \langle 13, bH \rangle$ and observe that $k \ldb H' \rdb$ has multiplicity 13 and, by Corollary \ref{cor:highermult}, it has a nontrivial semidualizing module.

{\bf Case 3:} Consider $a \in \{14, 16\}$. We take $H =  \langle 9, 12, 14, 15, 16 \rangle$ and note that $\mu(H) \in F_{22}$. Thus, by Theorem \ref{mainthm}, $k \ldb H \rdb$ has a nontrivial semidualizing module. Take $a = 14$ or $a = 16$ and consider $b \in \NN$ such that $\gcd(a,b) = 1$ and $9b > a$. Set $H' = \langle a, bH \rangle$ and observe that $k \ldb H' \rdb$ has multiplicity $a$ and, by Corollary \ref{cor:highermult}, it has a nontrivial semidualizing module.

{\bf Case 4:} Consider $a = 17$. We take $H = \langle 9, 12, 15, 17, 19 \rangle$ and note that $\mu(H) \in F_5$. Thus, by Theorem \ref{mainthm}, $k \ldb H \rdb$ has a nontrivial semidualizing module. Take $a = 17$ and consider $b \in \NN$ such that $\gcd(17,b) = 1$ and $9b > 17$. Set $H' = \langle 17, bH \rangle$ and observe that $k\ldb H' \rdb$ has multiplicity 17 and, by Corollary \ref{cor:highermult}, it has a nontrivial semidualizing module.
\end{proof}

Although the aforementioned proof establishes the existence of nontrivial semidualizing modules for certain numerical semigroup rings with higher multiplicity, it does not provide a concrete construction for such modules. The following proposition, however offers the first hint towards a construction of semidualizing modules that mirrors the gluing of numerical semigroups.

\begin{prop}\label{prop:gluCanon}
Let $A, B \subset \NN$ such that $\langle A \rangle$ and $\langle B \rangle$ are numerical semigroups with canonical modules $K_A$ and $K_B$, respectively. If $a \in \langle A \rangle$ and $b \in  \langle B \rangle$ such that $\gcd(a,b) = 1$, then the canonical module for $H = \langle bA, aB \rangle$ is given by \[K_H = \left( t^{br + as} \mid t^r \in K_A, t^s \in K_B \right).\]
\end{prop}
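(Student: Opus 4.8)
The plan is to work entirely with the combinatorial (value-set) description of the canonical module of a numerical semigroup ring and to reduce the statement to an identity of subsets of $\mathbb{Z}$. Write $S_1=\langle A\rangle$ and $S_2=\langle B\rangle$, and note that since $\langle bA\rangle=bS_1$ and $\langle aB\rangle=aS_2$ both contain $0$, the gluing is the sumset $H=bS_1+aS_2=\{\,b\alpha+a\beta\mid \alpha\in S_1,\ \beta\in S_2\,\}$. I take $K_A,K_B$ to be the monomial canonical ideals, so their exponent (value) sets are $\Omega_1=\{r\mid F_1-r\notin S_1\}$ and $\Omega_2=\{s\mid F_2-s\notin S_2\}$, where $F_i$ is the Frobenius number of $S_i$; this is the standard realization of the canonical fractional ideal of $k[\![S]\!]$ as the $k$-span of $\{t^x\mid F(S)-x\notin S\}$. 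Because $\Omega_i+S_i=\Omega_i$ and $H=bS_1+aS_2$, the $R_H$-module $K_H$ in the statement is monomial with value set exactly $b\Omega_1+a\Omega_2$. Hence the proposition reduces to the set identity
\[
b\Omega_1+a\Omega_2 \;=\; \Omega_H, \qquad\text{where } \Omega_H=\{\,x\in\mathbb{Z}\mid F-x\notin H\,\}\ \text{ and }\ F=bF_1+aF_2+ab :
\]
once this holds, $K_H$ and the standard canonical ideal of $R_H$ are monomial fractional ideals with the same value set, hence equal (and in general, for unnormalized $K_A,K_B$, isomorphic).

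To obtain a workable membership test for $H$, I would first fix the Apéry set $\mathrm{Ap}(S_2,b)=\{w_0,\dots,w_{b-1}\}$ (legitimate since $b\in S_2$) and prove, using $a\in S_1$, the criterion: for $n\in\mathbb{Z}$ with $i$ chosen so that $aw_i\equiv n\pmod b$, one has $n\in H\iff (n-aw_i)/b\in S_1$. The nontrivial point is that replacing $\beta$ by a larger element of its residue class changes $(n-a\beta)/b$ by a multiple of $a\in S_1$, so $H$-membership is already decided by the minimal representative $w_i$. From this criterion together with $\max\mathrm{Ap}(S_2,b)=F_2+b$, the Frobenius formula $F(H)=bF_1+aF_2+ab$ falls out by maximizing over residue classes, which justifies the value of $F$ used above.

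For $\Omega_H\subseteq b\Omega_1+a\Omega_2$ I would argue constructively: given $F-x\notin H$, write $F-x=bc+aw_i$ with $c=(F-x-aw_i)/b\notin S_1$ and rearrange to $x=b(F_1-c)+a(F_2+b-w_i)$; the criterion gives $F_1-c\in\Omega_1$, while $F_2+b-w_i\in\Omega_2$ because $w_i-b\notin S_2$ by minimality of $w_i$ in its class. For the reverse inclusion I would proceed by contradiction: given $r\in\Omega_1$, $s\in\Omega_2$, set $x=br+as$ and suppose $F-x=b\alpha+a\beta$ with $\alpha\in S_1,\ \beta\in S_2$; comparing with $F-x=b(F_1-r)+a(F_2+b-s)$ and using $\gcd(a,b)=1$ forces $\alpha=(F_1-r)-a\kappa$ and $\beta=(F_2-s)+b(\kappa+1)$ for a single $\kappa\in\mathbb{Z}$. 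A case split on the sign of $\kappa$ then uses $a\in S_1$ (when $\kappa\ge 0$) to force $F_1-r\in S_1$, and $b\in S_2$ (when $\kappa\le-1$) to force $F_2-s\in S_2$, each contradicting $r\in\Omega_1$ and $s\in\Omega_2$ respectively.

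I expect the reverse inclusion to be the main obstacle, precisely because the representation $n=b\alpha+a\beta$ is not unique: the crux is to use $\gcd(a,b)=1$ to capture all representations by the single parameter $\kappa$ and then play the two conditions $a\in S_1$ and $b\in S_2$ against the two non-membership conditions. A cleaner-looking but technically heavier alternative, which I would keep in reserve, is the ring-theoretic route: realize $R_H$ as the quotient of the completed tensor product $R_{\langle A\rangle}\,\widehat{\otimes}_k\,R_{\langle B\rangle}$ (a two-dimensional Cohen–Macaulay ring) by the regular element identifying the two copies of $t^{ab}$, so that $\omega_{R_H}\cong(\omega_{\langle A\rangle}\widehat{\otimes}_k\omega_{\langle B\rangle})/(g)$; this yields the same formula but requires care in handling the completed tensor product and in verifying that the gluing element is a nonzerodivisor.
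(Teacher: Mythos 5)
Your proof is correct, but it takes a genuinely different route from the paper's. The paper's argument for Proposition~\ref{prop:gluCanon} is essentially a citation: it invokes Nari's formula $\operatorname{PF}(H)=\{bf+af'+ab\}$ for the pseudo-Frobenius numbers of a gluing, the resulting (Delorme) formula $F_H=bF_A+aF_B+ab$, and the standard presentation $K_H=(t^{F_H-f}\mid f\in\operatorname{PF}(H))$, and simply assembles the statement from these three facts. You instead prove the underlying combinatorial identity from scratch, namely that the value set $\Omega_H=\{x\mid F_H-x\notin H\}$ of the canonical ideal equals $b\Omega_1+a\Omega_2$: your Ap\'ery-set membership criterion for $H$ (resting on the observation that replacing $\beta$ by a larger element of its residue class shifts $(n-a\beta)/b$ by a multiple of $a\in\langle A\rangle$) simultaneously yields the Frobenius formula and the inclusion $\Omega_H\subseteq b\Omega_1+a\Omega_2$, while your parametrization of all representations $F_H-x=b\alpha+a\beta$ by a single integer $\kappa$ (valid precisely because $\gcd(a,b)=1$), playing $a\in\langle A\rangle$ against $F_A-r\notin\langle A\rangle$ and $b\in\langle B\rangle$ against $F_B-s\notin\langle B\rangle$, settles the reverse inclusion. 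The individual steps all check out, in particular the facts $w_i-b\notin\langle B\rangle$ and $\max\operatorname{Ap}_b(\langle B\rangle)=F_B+b$ that you use. What your route buys is self-containedness and slightly greater generality in hypotheses: the paper must separately remark that Nari's Section~6 assumptions $a\in\langle A\rangle\setminus A$ and $b\in\langle B\rangle\setminus B$ are not actually needed, whereas your argument only ever uses $a\in\langle A\rangle$, $b\in\langle B\rangle$, and $\gcd(a,b)=1$. What the paper's route buys is brevity and an explicit statement of $\operatorname{PF}(H)$, whose shape it then mirrors in the construction of Theorem~\ref{thm:semidualglue}; your value-set identity carries the same information, but you would need to extract the maximal elements of $\Omega_H$ to recover it.
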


\begin{proof}
From \cite[Prop 6.6]{N13}, one gets that the set of pseudo-Frobenius numbers of $H = \langle bA, aB \rangle$ is given by the set \[\operatorname{PF}(H) = \{ bf + af' + ab : f \in \operatorname{PF}(A), f' \in \operatorname{PF}(B)\}\] where $\operatorname{PF}(A)$ and $\operatorname{PF}(B)$ are the sets of pseudo-Frobenius numbers of $\langle A \rangle$ and $\langle B \rangle$, respectively. From here (or using \cite[Prop 10(i)]{D76}), one notes that the Frobenius number of $H$ is given by \[F_H = b F_A + a F_B + ab\] where $F_A$ and $F_B$ are the Frobenius numbers of $\langle A \rangle$ and $\langle B \rangle$. Finally, we note from \cite{N13} that $K_H = (t^{F_H - f} : f \in \operatorname{PF}(H))$ which, when combined with the above observations, gives the desired result.
\end{proof}

While this proposition addresses the canonical module rather than a nontrivial semidualizing module, it suggests how one might construct a nontrivial semidualizing module. In fact, the next theorem shows that the construction of a semidualizing module for a gluing resembles the construction of the canonical module.

\begin{theorem}\label{thm:semidualglue}
Consider the set-up of Proposition \ref{prop:gluCanon}. If the numerical semigroup rings  for $\langle A \rangle$ and $\langle B \rangle$ have semidualizing modules $I_1$ and $I_2$ (generated by monomials), respectively, then the numerical semigroup ring $k \ldb H \rdb$ has semidualizing module
\[I=\left( t^{br + as} \mid t^r \in I_1, t^s \in I_2 \right).\]

Consequently, if the numerical semigroup ring for $\langle A \rangle$ or $\langle B \rangle$ has a nontrivial semidualizing module, then so does $k \ldb H \rdb$.
\end{theorem}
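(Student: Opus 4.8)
The plan is to realize $k\ldb H \rdb$ as a hypersurface section of the (completed) tensor product of the two smaller semigroup rings, and then to obtain $I$ by reducing the tensor product of $I_1$ and $I_2$ modulo the defining regular element. Write $R_1 := k\ldb \langle A\rangle\rdb \subseteq k\ldb s\rdb$ and $R_2 := k\ldb\langle B\rangle\rdb \subseteq k\ldb s'\rdb$, and set $T := R_1 \,\widehat{\otimes}_k\, R_2$. The $k$-algebra map $\Phi\colon T \to k\ldb H\rdb$ determined by $s^{\alpha}\otimes s'^{\beta}\mapsto t^{b\alpha+a\beta}$ is surjective, and since $\gcd(a,b)=1$ the only nontrivial collisions among the monomials come from $s^{a}\otimes 1$ and $1\otimes s'^{b}$ both mapping to $t^{ab}$. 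First I would show that $\ker\Phi = (\theta)$ for $\theta := s^{a}\otimes 1 - 1\otimes s'^{b}$, so that $k\ldb H\rdb \cong T/(\theta)$; this is the symmetric version of the presentation behind Proposition~\ref{prop:defIdeal} and Corollary~\ref{cor:highermult}. As $T$ is a $2$-dimensional complete local Cohen--Macaulay ring and $T/(\theta)$ is one-dimensional, $\theta$ is a parameter and hence a nonzerodivisor on $T$.

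Next I would place a semidualizing module on $T$ and cut it down. The completed tensor product $C := I_1 \,\widehat{\otimes}_k\, I_2$ is a semidualizing $T$-module by the tensor-product principle for semidualizing modules over $k$-algebras (the mechanism behind \cite[Corollary 4.9]{A}). Because depth is additive over $k$ for completed tensor products, $\operatorname{depth}_T C = \operatorname{depth}_{R_1} I_1 + \operatorname{depth}_{R_2} I_2 = 1+1 = 2$, so $C$ is maximal Cohen--Macaulay over $T$; consequently the parameter $\theta$ is also $C$-regular. Now I invoke the base-change property of semidualizing modules along an element that is regular on both the ring and the module (the standard $\operatorname{Hom}$/$\operatorname{Ext}$ reduction, as in \cite[\S18, Lemma~2]{M}): since $\theta$ is $T$-regular and $C$-regular, $C/\theta C$ is a semidualizing module over $T/(\theta) = k\ldb H\rdb$.

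It then remains to identify $C/\theta C$ with the explicit ideal $I$, and here the monomial hypothesis on $I_1$ and $I_2$ is used. Extending $\Phi$ to fractional ideals sends $C$ onto $I = (t^{br+as}\mid t^r\in I_1,\ t^s\in I_2)$ and annihilates $\theta C$, yielding a surjection $\overline{\Phi}\colon C/\theta C \twoheadrightarrow I$ of $k\ldb H\rdb$-modules. Both modules are torsion-free of rank one over the one-dimensional Cohen--Macaulay domain $k\ldb H\rdb$: the ideal $I$ because it is a nonzero fractional ideal, and $C/\theta C$ because a semidualizing module over this domain is maximal Cohen--Macaulay (hence torsion-free) and has rank one (its localization at the generic point is semidualizing over the fraction field $Q(k\ldb H\rdb)$, hence one-dimensional). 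Thus $\ker\overline{\Phi}$ is a torsion submodule of a torsion-free module, so it vanishes and $I\cong C/\theta C$ is semidualizing.

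Finally, for nontriviality I would count generators and types. Reducing modulo the regular element $\theta\in\m_T$ preserves minimal generators, so $\mu_{k\ldb H\rdb}(I) = \mu_T(C) = \mu_{R_1}(I_1)\,\mu_{R_2}(I_2)$, while the pseudo-Frobenius computation underlying Proposition~\ref{prop:gluCanon} (\cite{N13}) gives $r(k\ldb H\rdb) = r(R_1)\,r(R_2)$. If, say, $I_1$ is nontrivial, then $1 < \mu_{R_1}(I_1) < r(R_1)$ by Lemma~\ref{dual}(3)--(5), and together with $1\le \mu_{R_2}(I_2)\le r(R_2)$ this forces $1 < \mu_{k\ldb H\rdb}(I) < r(k\ldb H\rdb)$; Lemma~\ref{dual}(4),(5) then shows $I\not\cong k\ldb H\rdb$ and $I\not\cong K_H$, so $I$ is nontrivial. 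The main obstacle I anticipate is the structural bridge in the first two paragraphs together with the regularity checks, namely pinning down $\ker\Phi=(\theta)$ and verifying that $\theta$ is a nonzerodivisor on both $T$ and $C$; once $k\ldb H\rdb \cong T/(\theta)$ with $\theta$ regular is in hand, the semidualizing property of $I$ follows formally from base change and a rank count.
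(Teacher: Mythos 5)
Your proposal follows essentially the same route as the paper: present $k\ldb H\rdb$ as the quotient of the completed tensor product $R_1\widehat{\otimes}_k R_2$ by the regular element $s^a\otimes 1-1\otimes s'^b$, note that $I_1\widehat{\otimes}_k I_2$ is semidualizing there (the paper cites \cite[Theorem 1.2(a)]{A} and \cite[Lemma 2.6]{C}, then \cite[Proposition 5.8]{C} for the reduction), and identify the quotient with $I$ via the monomial surjection together with torsion-freeness and equality of ranks. Your explicit generator/type count for the nontriviality claim is a correct way to fill in a step the paper leaves implicit, but the overall argument is the same.
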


\begin{proof}
We start by fixing the following notation. We let $A = \{a_1,\ldots,a_n\}$ and $B = \{b_1,\ldots,b_m\}$. For $a \in \langle A \rangle$ and $b \in \langle B \rangle$ we have $r_1,\ldots,r_n,s_1,\ldots,s_m \in \ZZ$ such that \[a = \sum_{i = 1}^n r_i a_i \quad \hbox{ and } \quad b = \sum_{j = 1}^m s_j b_j.\] From here we set $I_A$ and $I_B$ to be the defining ideals of $k \ldb H_A \rdb$ and $k \ldb H_B \rdb$. That is, we have ideal $I_A \subseteq k \ldb x_1, \ldots, x_n \rdb$ and $I_B \subseteq k \ldb y_1, \ldots, y_m \rdb$ such that $k \ldb H_A \rdb \cong k \ldb x_1, \ldots, x_n \rdb/ I_A$ and $k \ldb H_B \rdb \cong k \ldb y_1, \ldots, y_m \rdb/ I_B$. Lastly, we set \[f = \prod_{i = 1}^n x_i^{r_i} - \prod_{j = 1}^m y_j^{s_j} \in k \ldb x_1,\ldots,x_n,y_1,\ldots,y_m\rdb.\]

We note that $I_1$ (and $I_2$) is semidualizing over $k \ldb H_A \rdb$ (respectively $k \ldb H_B \rdb$) if and only if it is semidualizing over $k[H_A] = k[x_1,\ldots,x_n]/I_A$ (respectively over $k[y_1,\ldots,y_m] / I_B$). Due to \cite[Theorem 1.2(a)]{A}, we have $I_1 \otimes_k I_2$ is a semidualizing module of $R = k[H_A] \otimes_k k[H_b] \cong k [ x_1, \ldots, x_n, y_1, \ldots, y_m ] / (I_A + I_B)$. Since $\widehat{I_1 \otimes_k I_2} \cong \left( I_1 \otimes_k I_2 \right) \otimes_{R} \widehat{ R }$, \cite[Lemma 2.6]{C} tells us that $\widehat{I_1 \otimes_k I_2}$ is a semidualizing $\widehat{R}$-module. 

Next, we note that $\widehat{R} \cong k \ldb x_1, \ldots, x_n, y_1, \ldots, y_m \rdb / (I_A + I_B)$ and \cite[Proposition 2.2]{GS19} yields $k \ldb H \rdb \cong \widehat{R} / (f)$. The above citation supposes the conditions mentioned in Remark \ref{rem:nongen}, but a minimal Cohen presentation is not necessary to obtain the second isomorphism above. Since $\widehat{R}$ is a domain, $f$ is a regular element and, by \cite[Proposition 5.8]{C}, we have $\widehat{I_1 \otimes_k I_2} \otimes_{\widehat{R}} \left(\widehat{R} / (f)\right)$ is a semidualizing $\widehat{R} / (f)$-module. To use this result, one observes that $\widehat{I_1 \otimes_k I_2} \otimes_{\widehat{R}} \left(\widehat{R} / (f)\right) \cong (I_1 \otimes_k I_2) \otimes_R \left(\widehat{R} / (f)\right)$ as $\widehat{R} / (f)$-modules.

To conclude our proof, we will show that the module $I$ given in the statement of the theorem satisfies $I \cong (I_1 \otimes_k I_2) \otimes_R \left(\widehat{R} / (f)\right)$. To obtain this isomorphism, we identify $f$ with $t^a \otimes 1 - 1 \otimes t^b$ which is its image in $R$. This gives the isomorphism \[(I_1 \otimes_k I_2) \otimes_R \left(\widehat{R} / (f)\right) \cong \frac{I_1 \otimes_k I_2}{( t^a \otimes 1 - 1 \otimes t^b ) \cdot (I_1 \otimes_k I_2)} \otimes_R \widehat{R}.\] The map $\varphi : \frac{I_1 \otimes_k I_2}{( t^a \otimes 1 - 1 \otimes t^b ) \cdot (I_1 \otimes_k I_2)} \otimes_R \widehat{R} \to I$ given on simple tensors by $\varphi(t^{\ell} \otimes t^s \otimes r) = r(t^{\ell})^b (t^s)^a$ is surjective by definition of $I$. Since both modules are torsion free and of the same rank, we have a $\widehat{R}/(f)$-module isomorphism, producing the desired result.
\end{proof}

We now exhibit the results of Theorem \ref{cor:highermult}, Proposition \ref{prop:gluCanon}, and Theorem \ref{thm:semidualglue} all in a single example.

\begin{example}\label{ex:highmult}
To give concrete examples of Theorem\ref{cor:allmult}, we consider multiplicity $10 \leq m \leq 18$. We start by setting the following notation.
	\begin{enumerate}
		\item $H_1 = \langle 9, 10, 11, 12, 15 \rangle$ has canonical module $K = (1, t, t^3, t^4)$ and semidualizing module $I = (1,t)$.
		\item  $H_2 = \langle 9, 11, 12, 13, 15 \rangle$ has canonical module $K = (1,t^2,t^3,t^5)$ and semidualizing module $I = (1,t^2)$.
		\item $H_3 = \langle 9, 12, 14, 15, 16 \rangle$ has canonical module $K = (1, t^2, t^3, t^5)$ and semidualizing module $I = (1, t^2)$.
		\item $H_4 = \langle 9, 12, 15, 17, 19 \rangle$ has canonical module $K = (1,t^2,t^3,t^5)$ and semidualizing module $I = (1,t^2)$.
	\end{enumerate}
The listed semidualizing module for $H_1$ was computed in Example \ref{ex7.3}. The semidualizing modules listed for $H_2$, $H_3$, and $H_4$ were computed using the same techniques. We now list out our examples in higher multiplicity obtained from gluings from the above list.

\begin{longtable}{|c|c|c|c|c|}
\caption{}
\label{tablemult}
\\
\hline
Multiplicity & $H$ & Generators & $K_H$ & $I$ \\
\hline
10 & $\langle 10, 3H_1 \rangle$ & $\{10,27,33,36,45\}$ & $(1,t^3,t^9,t^{12})$ & $(1,t^3)$ \\
\hline
11 & $\langle 11, 2H_1 \rangle$ & $\{11,18,20,24,30\}$ & $(1,t^2,t^6,t^8)$ & $(1,t^2)$ \\
\hline
12 & $\langle 12, 5H_1 \rangle$ & $\{12,45,50,55,75\}$ & $(1,t^5,t^{15},t^{20})$ & $(1,t^5)$ \\
\hline
13 & $\langle 13, 2H_2\rangle$ & $\{13,18,22,24,30\}$ & $(1,t^4,t^6, t^{10})$ & $(1,t^4)$ \\
\hline
14 & $\langle 14, 3H_3 \rangle$ & $\{14,27,36,45,48\}$ & $(1,t^6,t^9,t^{15})$ & $(1,t^6)$ \\
\hline
15 & $\langle 15, 2H_1\rangle$ & $\{15,18,20,22,24\}$ & $(1,t^2,t^6,t^8)$ & $(1,t^2)$ \\
\hline
16 & $\langle 16, 3H_3 \rangle$ & $\{16,27,36,42,45\}$ & $(1,t^6, t^9, t^{15})$ & $(1,t^6)$ \\
\hline
17 & $\langle 17, 2H_4 \rangle$ & $\{17,18,24,30,38\}$ & $(1,t^4,t^6,t^{10})$ & $(1,t^4)$ \\
\hline
18 & $\langle 18, 5H_1 \rangle$ & $\{18,45,50,55,60,75\}$ & $(1,t^5,t^{15},t^{20})$, & $(1,t^5)$ \\
\hline
\end{longtable}

\end{example}

\begin{remark}
To understand Proposition \ref{prop:gluCanon} in our context, it is important to note that if $B = \{1\}$, then $F_B = -1$. One way to interpret this comes from the associated numerical semigroup ring $k \ldb t \rdb$. In this case the largest power of $t$ not in the semigroup ring is $t^{-1}$.
\end{remark}

It is worth noting that in Section 6 of \cite{N13}, the author assumed that $a \in \langle A \rangle \backslash A$ and $b \in \langle B \rangle \backslash B$. However, this additional assumption is unnecessary to obtain their result. Consequently, we use it freely. This, combined with Example \ref{ex:highmult}, provides the motivation for the following question.

\begin{question}\label{question:monomial}
Suppose $k \ldb H \rdb$ is a numerical semigroup ring with a nontrivial semidualizing module $I$. Can one always say that $I$ is generated by monomials?
\end{question}

\section*{Acknowledgments}

The authors would like to thank J\"urgen Herzog for his feedback and insightful discussions during his visit to Morgantown in April 2023 that led to the addition of Section \ref{section5}. Part of this work was completed when Kobayashi visited West Virginia University in September 2022 and March 2023. He is grateful for the kind hospitality of the WVU School of Mathematical and Data Sciences.


\end{document}